\documentclass[reqno,twoside,11pt,english]{amsart}
\usepackage{amsmath,amsfonts,amssymb,amsthm,epsfig}
\newcommand{\RNum}[1]{\uppercase\expandafter{\romannumeral #1\relax}}
\usepackage{comment}
\usepackage{needspace}
\usepackage{color}
\usepackage[final,colorlinks,linkcolor=blue,anchorcolor=red,citecolor=blue]{hyperref}
\usepackage{graphicx}
\usepackage{titletoc}
\usepackage{amsmath,amsfonts,latexsym,amsthm,amsxtra,amssymb}
\allowdisplaybreaks
\usepackage{graphicx,float}

\newcommand{\lam}{\lambda}
\newcommand{\La}{\Lambda}

\newcommand{\ba}{\backslash}

\newcommand{\la}{\lambda}
\newcommand{\de}{\delta}

\newcommand{\oo}{\infty}

\newcommand{\non}{\nonumber}

\newcommand{\al}{\alpha}
\newcommand{\dist}{\text{\rm dist}}
\newcommand{\medint}{-\kern -,375cm\int}         
\newcommand{\medintinrigo}{-\kern -,315cm\int}
\newcommand{\wto}{\rightharpoonup}                

\newcommand{\diam}{\operatorname{diam}}

\newcommand{\cal}{\mathcal}

\newcommand{\HH}{{\mathcal H}}

\newcommand{\R}{{\mathbb R}}

\newcommand{\CP}{\mathbb{CP}}

\newcommand{\N}{{\mathbb N}}

\newcommand{\ep}{\varepsilon}
\newcommand{\su}{\subset}

\newcommand{\ri}{\rightarrow}

\def\supp{\textup{supp}}

\def\loc{\text{\rm loc}}

\newcommand{\sing}{\text{\rm sing}}

\newcommand{\grad}{\text{\rm grad}}

\newcommand{\Span}{\text{\rm span}}

\newcommand{\id}{\mathrm{id}}

\newcommand{\e}{\varepsilon}
\newcommand{\Div}{\mathrm{Div}}

\newcommand{\vol}{\mathrm{vol}}

\numberwithin{equation}{section}
\newtheorem{theorem}{Theorem}[section]

\newtheorem{corollary}[theorem]{Corollary}

\newtheorem{definition}[theorem]{Definition}

\newtheorem{lemma}[theorem]{Lemma}

\newtheorem{proposition}[theorem]{Proposition}
\newtheorem{remark}[theorem]{Remark}
\newtheorem*{remark*}{Remark}

\theoremstyle{definition}
\newtheorem{Thm}{Theorem}

\makeatother

\begin{document}
	
	\title[Quantitative stratification for stationary $J$-holomorphic maps]{Quantitative stratification and optimal regularity for stationary $J$-holomorphic maps} 
	
	\author[C.-Y. Guo, M.-L. Liu and C.-Y. Wang]{Chang-Yu Guo, Ming-Lun Liu and Chang-You Wang}

	\address[Chang-Yu Guo]{Research Center for Mathematics and Interdisciplinary Sciences, Shandong University 266237,  Qingdao, P. R. China and Department of Physics and Mathematics, University of Eastern Finland, 80101, Joensuu, Finland} \email{changyu.guo@sdu.edu.cn}
	
	\address[Ming-Lun Liu]{Research Center for Mathematics and Interdisciplinary Sciences, Shandong University 266237,  Qingdao, P. R. China and and  Frontiers Science Center for Nonlinear Expectations, Ministry of Education, P. R. China}
	\email{minglunliu2021@163.com}
	
	\address[Changyou Wang]{Department of Mathematics, Purdue University, West Lafayette, IN 47907, USA} \email{wang2482@purdue.edu}

	\begin{abstract}
		In this paper, we give {an alternate} proof of the $\e$-regularity theorem of Wang [Comm. Cont. Math. 2003] for weakly $J$-holomorphic maps by adapting the conservation law approach of Rivi\`ere [Invent. Math. 2007]. Then we adapt the powerful quantitative stratification method of Naber--Valtorta [Ann. Math. 2017] to obtain optimal volume estimates and rectifiability for the quantitative strata of stationary $J$-holomorphic maps.  {In contrast with general} harmonic maps, the almost complex equation  { gives rise to} an additional symmetry improvement: Every odd-dimensional translation symmetry extends to an even-dimensional one. Consequently, consecutive even and odd strata
		coincide, and the $k$-th quantitative stratum satisfies a tubular-neighborhood
		estimate with exponent $2m-2\lfloor k/2\rfloor$, where $2m$ is the real
		dimension of the domain. The corresponding strata are
		$2\lfloor k/2\rfloor$-rectifiable, with almost-everywhere unique  { complex planes of symmetry}. Finally, we prove that every stationary $J$-holomorphic
		map is smooth if the target { manifold} admits no non-constant $J$-holomorphic sphere,
		{ improving} the corresponding  { theorem} of Wang.
	\end{abstract}
	
	\date{}
	\subjclass[2020]{53C43, 58C10, 58E20}
	\keywords{$J$ holomorphic map, Regularity theory, Quantitative stratification, Singular set, Rectifiable set}
	\thanks{$^*$Corresponding author: Ming-Lun Liu}

	\maketitle
	\begingroup\small\setlength{\baselineskip}{11pt}
	\tableofcontents
	\endgroup
	\clearpage
	
	\section{Introduction}
	
	\subsection{Background and motivation}
	Pseudoholomorphic curves, introduced by M. Gromov in his celebrated work \cite{Gromov-85}, play  fundamental {roles} in symplectic geometry \cite{Wolfson-1988,Ye-1994}. Their  higher dimensional extension, pseudoholomorphic maps, are central objects in  geometry. They also form a natural extension of holomorphic maps to { almost complex} manifolds
; see \cite{Li-Sheng-2019,Miller-2015,Riviere-Tian-2004} for more about pseudoholomorphic maps. 
	
	From the analytic point of view, { it is} an important question  to understand the regularity of weakly pseudoholomorphic maps and the structure of { their} singularities. Somewhat surprisingly, although their defining equation is of first order, weak solutions with
	finite Dirichlet  {energies} exhibit  features similar to that of harmonic maps  {between} manifolds, as was shown in  { the previous} work 
    {by} Wang \cite{Wang-03-CCM}.
	
	Let $(M^{2m},g,j)$ and $(N^{2n},h,J)$ be closed smooth almost Hermitian manifolds 
    { without boundaries}, i.e. $j^2=-\id$, $J^2=-\id$, and the Riemannian metrics are compatible with their almost complex structures. Fix an isometric embedding $N\hookrightarrow\R^L$ { for some large $L\ge 1$} and set
	\[
	H^1(M,N)=\Big\{u\in H^1(M,\R^L):u(x)\in N\text{ for a.e. }x\in M\Big\}.
	\]
	The Dirichlet energy of  map $u\in H^{1}(M,N)$ is defined by 
	\[
	E(u)=\int_M|du|^2\,d\vol_g.
	\]
	
	\begin{definition}\label{def: J holo}
		A map $u\in H^1(M,N)$ is called weakly $(j,J)$-holomorphic, or simply
		weakly $J$-holomorphic, if
		\begin{equation}\label{eq: def of J holo}
			du\bigl(j(x)X\bigr)=J(u(x))\,du(X)
			\qquad\text{for a.e. }x\in M\text{ and every }X\in T_xM.
		\end{equation}
	\end{definition}
	
	When $m=1$, every weakly $J$-holomorphic map is smooth; see Ye~\cite{Ye-1994}. In higher dimensions, however, singularity may occur. For instance, the projection map $\pi:B_1\subset \mathbb{C}^m\to \mathbb{CP}^{m-1}$ $(m\geq 2)$, $\pi(z):=[z_1;\cdots; z_m]$ is a weakly $J$-holomorphic map with an isolated singularity at the origin. Moreover, a $J$-holomorphic map between  almost
	Hermitian manifolds need not be harmonic. Thus the regularity theory of harmonic maps cannot be applied directly without exploring further the analytic structure supplied by~\eqref{eq: def of J holo}.
	
	The  {partial} regularity (and blow-up) theory for weakly $J$-holomorphic maps was 
    { previously studied} by
	Wang~\cite{Wang-03-CCM}. A starting point is the following $\e$-regularity theorem, which is the analogue of  {that} for harmonic maps. Recall { that for $U\subset M$, the gradient $du$, for $u\in H^1_{\rm{loc}}(U, N)$, belongs to the Morrey { space $M^{2, 2m-2}(U)$, 
    if
	\[
	\big\|du\big\|_{M^{2,2m-2}(U)}^2
	=\sup_{x\in U,\,0<\rho<\diam(U)}\Big\{
	\rho^{2-2m}\int_{B_\rho(x)\cap U}|du|^2\,d\vol_g\Big\}<+\infty.
	\]}}
	The following  partial regularity { theorem} was obtained by Wang \cite{Wang-03-CCM}.  
	\begin{Thm}[{\cite[Theorem 1.1]{Wang-03-CCM}}]
		\label{thm: ep-regularity}
		There exists $\varepsilon_0>0$, depending only on the almost Hermitian
		geometry of $M$ and $N$, such that the following holds: If
		$u\in H^1(M,N)$ is weakly $J$-holomorphic and { for $B_r(p)\subset M$},
		\[
		\|du\|_{M^{2,2m-2}(B_r(p))}<\varepsilon_0,
		\]
		then $u$ is smooth in $B_{r/2}(p)$.
	\end{Thm}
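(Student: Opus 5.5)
We follow Rivière's conservation-law approach, as the abstract announces. The aim is to rewrite the first-order equation \eqref{eq: def of J holo} as a second-order elliptic system with antisymmetric potential, $-\Delta u=\Omega\cdot\nabla u$, where $\Omega\in L^2$ is small in Morrey norm. Then a Morrey-space version of Rivière's gauge construction gives Hölder continuity, and bootstrapping gives smoothness.

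The first step is to derive the system. We work in local coordinates on $B_r(p)$, with $j$ and $g$ close to the standard structure after scaling, and regard $J$ as extended to a neighborhood of $N\subset\R^L$. Write $\bar\partial_{j,J}u=0$ componentwise and apply the adjoint operator. Using $J(u)^2=-\id$ and the compatibility of $J$ with $h$, the Laplacian of $u$ becomes
\[
\Delta u = A(u)(du,du)+B(x,u)\,du\,\nabla(J(u))+(\text{lower order terms from }j,g),
\]
where the quadratic terms are linear in $\nabla u$ with coefficients built from $\nabla J(u)=DJ(u)\nabla u$ and from the second fundamental form of $N$.

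The key point is to arrange the right-hand side as $\Omega^i_k\cdot\nabla u^k$ with $\Omega^i_k=-\Omega^k_i$ up to an $L^2$ error. I would first try Rivière's trick for harmonic maps: a term $\nabla u^i\,\nu\cdot\nabla u^k$ can be replaced by its antisymmetrization because $\nu\cdot\nabla u=0$. For the $J$-term one uses
\[
\nabla u=-J\,\nabla^{\perp}u+(\text{metric error}),
\]
so that the $J$-contributions appear either as divergence-free fields times $\nabla u$, which are handled as Jacobian structures, or as antisymmetric combinations. The metric error terms are of the form $O(|x|)\,|\nabla u|^2$ plus $C|\nabla u|$; after rescaling they are small in the same Morrey norm and can be absorbed into $\Omega$ or into an inhomogeneous term.

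Next we apply Rivière's method in its Morrey form, following Rivière--Struwe for harmonic maps in higher dimensions. Smallness of $\|\Omega\|_{M^{2,2m-2}}\lesssim\|du\|_{M^{2,2m-2}}<\ep_0$ yields, by Uhlenbeck-type gauge fixing in Morrey spaces, a gauge $P\in W^{1,2}(SO(L))$ with
\[
P^{-1}\nabla P+P^{-1}\Omega P=\nabla^\perp\xi .
\]
This gives the conservation law
\[
\mathrm{div}(P^{-1}\nabla u-\nabla^\perp\xi\,P^{-1}u)=0
\]
in the corresponding form. Decomposing $\nabla u$ via Hodge decomposition on balls and using Coifman--Lions--Meyer--Semmes together with the Fefferman--Stein $\mathcal H^1$--BMO duality, we obtain the decay estimate
\[
\rho^{2-2m}\int_{B_{\rho}}|du|^2\le \tfrac12\, (2\rho)^{2-2m}\int_{B_{2\rho}}|du|^2+C\rho^{\alpha}
\]
for all small balls. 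Iterating this gives $du\in M^{2,2m-2+2\alpha}$, hence $u\in C^{0,\alpha}(B_{r/2}(p))$ by Morrey's lemma. Finally, for a continuous $u$ the equation \eqref{eq: def of J holo} is locally a perturbation of a linear elliptic (Cauchy--Riemann) system with $C^\alpha$ coefficients $J(u)$. Standard Schauder and $L^p$ bootstrapping then gives $C^{1,\alpha}$ and eventually $C^\infty$.

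The main obstacle is the first step: extracting an exactly antisymmetric $\Omega$ from the non-harmonic, $J$-dependent terms. I would handle the non-antisymmetric remainder by showing that it has the form $\nabla a\cdot\nabla^\perp b$. Such a term lies in $\mathcal H^1$ and can be treated by the same Wente-type estimates, so it need not be placed in $\Omega$.
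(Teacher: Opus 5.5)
You have the same architecture as the paper: an antisymmetric potential, Rivi\`ere--Struwe in Morrey spaces, then bootstrapping. The gap is that the one step requiring an idea is left open. You never produce an $\mathfrak{so}(L)$-valued $\Omega$ (you call it the main obstacle), and the fallbacks you offer do not work as stated.

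\begin{itemize}
\item A critical term that is not antisymmetric cannot be ``absorbed into $\Omega$''. Moving a term of size $|\nabla u|^2$ (or $O(|x|)|\nabla u|^2$) into an inhomogeneous term is not justified by Morrey smallness either. Rivi\`ere--Struwe relies on exact antisymmetry, and smallness alone does not give regularity. For instance, $u=\log\log(1/|x|)$ solves $-\Delta u=|\nabla u|^2$ on a planar disc, is unbounded, and has energy $2\pi/\log(1/\rho)$ on $B_\rho$. Extending it trivially in the other variables gives the same phenomenon in $\R^{2m}$.
\item The Jacobian structure $\nabla a\cdot\nabla^\perp b$ you claim for the remainder is asserted, not derived. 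Also, $\nabla^\perp$ and the identity $\nabla u=-J\nabla^\perp u$ are two-dimensional; in dimension $2m$ the equation reads $\partial_a u=-j_a^b\,J(u)\,\partial_b u$.
\item The harmonic-map devices you invoke (second fundamental form, antisymmetrizing via $\nu\cdot\nabla u=0$) are beside the point, since $u$ need not be harmonic.
\end{itemize}

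The missing idea is to extend $J$ by zero on the normal bundle. Set $\widetilde J(y)=J(y)\circ P_{T_yN}$; this is a skew-symmetric $L\times L$ matrix because $J$ is compatible with $h$. You do not need $\widetilde J^2=-\mathrm{id}$ on $\R^L$ (it is false there). It suffices that $\widetilde J(u)^2\partial_a u=-\partial_a u$, which holds because $\partial_a u$ is tangent. The equation becomes $\widetilde J(u)\grad_g u=-j\,\grad_g u$, hence
\[
-\Delta_g u=\Div_g\bigl(\widetilde J(u)\widetilde J(u)\grad_g u\bigr)=-\Div_g\bigl(\widetilde J(u)\,j\,\grad_g u\bigr).
\]
Since $\sqrt{g}\,g^{ai}j_i^b$ is antisymmetric in $(a,b)$, the second-order terms cancel. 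What remains is exactly $-\Delta_g u=\langle\Omega,\grad_g u\rangle_g$ with $\Omega=\Div_g(\widetilde J(u)\otimes g^{-1}j)$. The paper reaches the same formula through the conservation law $\Div_g(\widetilde J(u)\grad_g u-u\otimes\Div_g(g^{-1}j))=0$.

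Every entry of $\Omega$ is either $D\widetilde J(u)\,du$ or $\widetilde J(u)$ times a derivative of $g^{-1}j$. So $\Omega$ is $\mathfrak{so}(L)$-valued with $|\Omega|\le C(|du|+1)$. There is no remainder and no metric error: the domain geometry sits inside the antisymmetric potential, and the second fundamental form is hidden in $D\widetilde J$. Then $\|\Omega\|_{M^{2,2m-2}(B_\rho)}\le C(\varepsilon_0+\rho)$. Your remaining steps, or directly Rivi\`ere--Struwe with Schikorra's variable-coefficient remark as in the paper, give H\"older continuity and then smoothness.
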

	
	In this paper, we revisit this result through the antisymmetric structure and conservation law of the equation,  { developed} by Rivi\`ere \cite{Riviere-2007-Invent} in his celebrated work on { the} conservation law of harmonic maps into general target manifolds. Extend $J$ to an endomorphism $\widetilde J$ of the ambient Euclidean bundle by setting it equal to zero on the normal
	bundle of $N$. The compatibility of $J$ with $h$ implies that
	$\widetilde J$ is skew-symmetric. As we shall prove in
	Section~\ref{subsec:mono for and partial regularity}, a weakly
	$J$-holomorphic map $u$ satisfies the conservation law:
	$$
	\Div_g\bigl(\widetilde{J}(u)\,\grad_g u-u\otimes\Div_g(g^{-1}j)\bigr)=0.
	$$
	Based on this conservation law, we infer that $u$ satisfies an elliptic system of Rivi\`ere type:
	\begin{equation}\label{eq: intro antisymmetric system}
		-\Delta_g u=\langle\Omega,\grad_g u\rangle_g,
		\qquad \Omega\in L^2(TM\otimes\mathfrak{so}(L)),
	\end{equation}
	with growth condition:
    $$|\Omega|\leq C(|du|+1).$$ 
    Here vectors and co-vectors are identified using $g$. This allows us to
	apply the regularity theorem of Rivi\`ere--Struwe~\cite{RS-2008} to give  { an alternate} proof of  Theorem~\ref{thm: ep-regularity}; see Theorem \ref{thm:new ep-regularity} below. 
	
	To obtain quantitative  { properties on the} singularities, we need to consider the class of { stationary solutions}.
	\begin{definition}\label{def: stationary J holo}
		A weakly $J$-holomorphic map $u\in H^1(M,N)$ is called stationary if
		\begin{equation}\label{eq: stationary J holo}
			\left.\frac{d}{dt}\right|_{t=0} E(u\circ F_t)=0
		\end{equation}
		{ holds} for every smooth compactly supported vector field $X$ on $M$, where
		{ $F_t: M\to M$ is the local smooth flow map generated by $X$.}
	\end{definition}
	
	As in the case of harmonic maps, stationarity yields a monotonicity inequality for the normalized
	Dirichlet energy; see \cite[Proposition 3.1]{Wang-03-CCM}. Together with Theorem~\ref{thm: ep-regularity}, this gives the partial regularity theory for stationary {$J$-}holomorphic maps: The map $u$ is smooth outside the singular set 
	\[
	\sing(u)=\Big\{x\in M:u\text{ is not smooth in any neighborhood of }x\Big\}
	\]
	with $\mathcal H^{2m-2}(\sing(u))=0$; see~\cite[Theorem 1.3]{Wang-03-CCM}.
	Adapting the seminal work of Lin \cite{Lin-1999-Annals} on stationary harmonic maps, Wang \cite[Theorem 1.4]{Wang-03-CCM} also established a blow-up theory describing the defect measure of
	a weakly convergent sequence of stationary $J$-holomorphic maps. In
	particular, the concentration set is $(2m-2)$-rectifiable with almost complex
	tangent planes and 
    { the} energy concentration produces a
	$J$-holomorphic sphere in the target {manifold}.
	
	On the other hand, it is an interesting open question whether we can  { extract further} geometric information about the {singular sets}. { According to the powerful techniques developed by Nabor-Valtorta 
    \cite{Naber-V-2017}},  a standard way to investigate these questions is to
	classify {singular} points according to the symmetries of their tangent maps. For $x\in M$ and $r>0$, consider the rescaled map
	\[
	u_{x,r}(y)=u(\exp_x(ry)).
	\]
	{ Any sub-sequential weak $H^1$ limit as $r\downarrow0$ is called a tangent
	map of $u$ at $x$. Such a map is homogeneous of degree zero and satisfies} the
	$J$-holomorphic equation on $T_xM$ with the constant domain complex
	structure $j_x$. Recall that a map on $T_xM$ is called $k$-symmetric if it is
	homogeneous of degree zero and invariant under translations along a
	$k$-dimensional real { vector} subspace. This leads to the  stratification of the singular
	stratum
	\[
	S^k(u)=\Big\{x\in M:\text{ no tangent map of }u\text{ at }x
	\text{ is }(k+1)\text{-symmetric}\Big\}.
	\]
	
	For harmonic maps, Cheeger--Naber~\cite{Cheeger-Naber-2013-CPAM}
	introduced quantitative stratification to measure the failure of symmetry
	on a prescribed range of scales and established almost optimal regularity for harmonic maps. Then in a 
    remarkable work, Naber--Valtorta~\cite{Naber-V-2017} developed a quantitative stratification method and  obtained optimal volume estimates about the Tubular neighborhood
	of each quantitative stratum and proved the corresponding rectifiability. Their
	approach combines quantitative cone splitting, an $L^2$ subspace
	approximation theorem, the quantitative Reifenberg theorems and a  delicate covering argument. Their method turns out to be very robust and has been successfully applied to many other geometric problems; see \cite{Cheeger-N-V-2015,Cheeger-J-N-2021,Fu-Wang-Zhang-2024,GJXZ-2024,Guo-Liu-Xiang-2026,Guo-Sun-2026,Naber-V-2020-JEMS,Wang-Wei-Wu-2025} for more information.
	
	For stationary $J$-holomorphic maps, there is a further geometric feature
	that should be reflected in the stratification. If a tangent map $v$ is
	translation invariant in a direction $a\in T_xM$, then
	\[
	dv(j_xa)=J(v)\,dv(a)=0.
	\]
	Consequently, its full space of translation symmetries is $j_x$-invariant
	and has even real dimension. In particular, a $(2\ell-1)$-symmetric
	tangent map is automatically $2\ell$-symmetric; see Lemma \ref{lemma: symmetry improvement} below. It is therefore natural
	to ask whether the quantitative stratification theory can capture this
	additional complex symmetry.
	
	\medskip
	\noindent\textbf{Question.}
	\emph{Do the singular strata of stationary $J$-holomorphic maps necessarily
		rectifiable? Can we prove optimal volume estimates for the singular strata?}
	\medskip
	
	A second question concerns the role of holomorphic spheres in regularity.
	Under the assumption that $(N,J)$ contains no non-constant
	$J$-holomorphic sphere, Wang~\cite[Theorem 1.6]{Wang-03-CCM} proved
	the improved estimate
	$\dim_{\mathcal H}\sing(u)\leq2m-6$, with a discrete singular set in
	real dimension six {(i.e., $m=3$)}. Since the same assumption rules out the defect
	measure in the compactness theory, it is natural to ask whether it
	actually excludes all singularities, in every dimension.
	
	\subsection{Main results}
	
	Our purpose of this paper is to provide affirmative answers to these two questions. Our first
	result extends the quantitative stratification theory to stationary
	$J$-holomorphic maps and incorporates the symmetry improvement described
	above. Our second result establishes smoothness when the target
	admits no non-constant $J$-holomorphic sphere.
	
	We briefly recall the notation for quantitative strata. The detailed
	definitions are given in Section~\ref{sec:quantitative ep regularity}.
	Roughly speaking, $u$ is $(k,\varepsilon)$-symmetric on a ball $B_s(x)$
	if $u_{x,s}$ is $\varepsilon$-close in the averaged squared $L^2$-distance (on the unit ball) to a $k$-symmetric map; see Definition \ref{def: quantitative symmetry} for the precise meaning. For $0<r<1$, set
	\[
	S^k_{\varepsilon,r}(u)=\Big\{x\in M:
	\ B_s(x)\text{ is not }(k+1,\varepsilon)\text{-symmetric}
	\ \text{for every }s\in[r,1)\Big\},
	\]
	and define
	\[
	S^k_\varepsilon(u)=\bigcap_{r>0}S^k_{\varepsilon,r}(u),
	\qquad
	S^k(u)=\bigcup_{i=1}^{\infty}S^k_{1/i}(u).
	\]
	The latter agrees with the tangent-map definition above; see
	Lemma~\ref{lemma: stratification for SJholo}. For a set $A\subset M$,
	write $T_r(A)=\big\{x\in M:\dist_g(x,A)<r\big\}$.
	
	To state our main result, we denote the class of maps with bounded energy 
	by
	$$ H^1_\Lambda(M,N)=\Big\{u\in H^1(M,N):E(u)\leq\Lambda\Big\}$$
    {for $\Lambda>0$}.
	Our main result of this paper provides an affirmative answer to the previous Question. 
	\begin{theorem}[Optimal volume estimates and rectifiability]
		\label{thm: stratification of SJholo}
		{Given any $\Lambda>0$,} let $u\in H^1_\Lambda(M^{2m},N^{2n})$ be a stationary
		$J$-holomorphic map, and let $0\leq k\leq2m-1$. Set
		$d_k=2\lfloor k/2\rfloor${, where $\lfloor x\rfloor$ denotes the integer part of $x$.}
		
		\smallskip
		\noindent{\upshape(i)}
		For every $\varepsilon>0$, there is a constant $C_\varepsilon=C(m,M,N,g,h,j,J,\Lambda,\varepsilon)$ such that, for
		every $p\in M$ and $0<r<1$, the following estimates hold: If $k$ is even,
		\begin{equation}\label{eq: Vol estomate-1}
			\vol_g\bigl(T_r(S^k_{\varepsilon,r}(u))\cap B_1(p)\bigr)
			\leq C_\varepsilon r^{2m-k},
		\end{equation}
		{ and if} $k$ is odd, there exists
		$\delta_\varepsilon=\delta(m,M,N,g,h,j,J,\Lambda,\varepsilon)>0$
		such that $S^k_{\varepsilon,r}(u)\subset
		S^{k-1}_{\delta_\varepsilon,r}(u)$ and
		\begin{equation}\label{eq: Vol estomate-1-1}
			\vol_g\bigl(T_r(S^k_{\varepsilon,r}(u))\cap B_1(p)\bigr)
			\leq C_\varepsilon r^{2m-k+1}.
		\end{equation}
		Consequently,
		\begin{align}
			\vol_g\bigl(T_r(S^k_\varepsilon(u))\cap B_1(p)\bigr)
			&\leq C_\varepsilon r^{2m-k}
			&&\text{if }k\text{ is even},\label{eq: Vol estomate-2}\\
			\vol_g\bigl(T_r(S^k_\varepsilon(u))\cap B_1(p)\bigr)
			&\leq C_\varepsilon r^{2m-k+1}
			&&\text{if }k\text{ is odd}.\label{eq: Vol estomate-2-1}
		\end{align}
		
		\smallskip
		\noindent{\upshape(ii)}
		The sets $S^k_\varepsilon(u)$ and $S^k(u)$ are countably
		$d_k$-rectifiable, and $S^k_\varepsilon(u)$ is locally upper Ahlfors
		$d_k$-regular. Moreover,
		\[
		S^{2\ell-2}(u)=S^{2\ell-1}(u),\qquad 1\leq\ell\leq m.
		\]
		For $\mathcal H^{d_k}$-almost every $x\in S^k(u)=S^{d_k}(u)$,
		there is a unique $j_x$-invariant real $d_k$-plane
		$V_x^{d_k}\subset T_xM$ such that every tangent map of $u$ at $x$
		is $d_k$-symmetric with respect to $V_x^{d_k}$.
	\end{theorem}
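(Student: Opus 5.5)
The plan is to follow the Naber--Valtorta scheme for stationary harmonic maps and add one extra ingredient, a \emph{quantitative} version of the symmetry improvement of Lemma~\ref{lemma: symmetry improvement}. The analytic inputs are the following.
\begin{itemize}
\item The monotonicity formula for the normalized energy $\theta(x,r)=r^{2-2m}\int_{B_r(x)}|du|^2$ of a stationary map, with error terms controlled by the geometry of $(M,g,j)$.
\item Compactness of $H^1_\Lambda$ stationary $J$-holomorphic maps. By Wang's blow-up theory, weak limits are stationary $J$-holomorphic up to a defect measure.
\item The $\varepsilon$-regularity of Theorem~\ref{thm:new ep-regularity}.
\end{itemize}
From these I would derive, in the usual order:
\begin{enumerate}
\item quantitative rigidity: small pinching $\theta(x,r)-\theta(x,\gamma r)<\delta$ forces $(0,\varepsilon)$-symmetry on $B_r(x)$;
\item quantitative cone splitting: $(0,\delta)$-symmetry at $k+1$ points in $\rho$-general position forces $(k,\varepsilon)$-symmetry;
\item the $L^2$ best-subspace approximation theorem. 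Here the $k$-dimensional Jones-type $\beta_2$ numbers of $\mu=\sum\omega_{2m}r_i^{2m}\delta_{x_i}$ are bounded by $\int (\theta(y,8r)-\theta(y,r))\,d\mu(y)$, whenever $u$ fails to be $(k+1,\varepsilon)$-symmetric on $B_{8r}$.
\end{enumerate}
Step~3 uses only the stationarity identity and the Poincaré-type argument of Naber--Valtorta, which carries over with metric error terms.

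The new step is a contradiction--compactness argument. I claim that for every $\varepsilon>0$ there is $\delta>0$ such that if $B_s(x)$ is $(2\ell-1,\delta)$-symmetric, then it is $(2\ell,\varepsilon)$-symmetric. Suppose not. Then there are maps $u_i$ and balls, rescaled to the unit ball with $j_{x_i}\to j_0$, which are $(2\ell-1,1/i)$-symmetric but not $(2\ell,\varepsilon)$-symmetric. Pass to a subsequence converging weakly in $H^1$ and strongly in $L^2$, which is the distance used in Definition~\ref{def: quantitative symmetry}. The limit $v$ is $L^2$-equal to a $(2\ell-1)$-symmetric map. The subtle point is that $v$ is the weak limit of $J$-holomorphic maps and is still weakly $J$-holomorphic with respect to $j_0$. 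The equation $du(jX)=J(u)du(X)$ is linear in $du$ with coefficient $J(u_i)\to J(v)$ strongly, so it passes to the limit. The Lemma~\ref{lemma: symmetry improvement} computation then applies: $dv(a)=0$ gives $dv(j_0a)=0$. Hence $v$ is $j_0$-invariant-symmetric of dimension $2\ell$, and this contradicts the assumption for large $i$. Applied with $k=2\ell-1$ and relabeling $\varepsilon,\delta$, this gives $S^{k}_{\varepsilon,r}\subset S^{k-1}_{\delta_\varepsilon,r}$ for odd $k$. The odd-$k$ estimate \eqref{eq: Vol estomate-1-1} then follows from the even estimate with exponent $2m-(k-1)$.

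For even $k$, I would run the Naber--Valtorta covering argument verbatim.
\begin{enumerate}
\item Build good/bad ball trees, using the energy drop to bound the number of bad generations.
\item Use the discrete Reifenberg theorem, fed by the $\beta_2$ estimate of step~3, to obtain the packing bound $\sum r_i^{k}\le C$.
\item Conclude \eqref{eq: Vol estomate-1}.
\end{enumerate}
Estimates \eqref{eq: Vol estomate-2} and \eqref{eq: Vol estomate-2-1} follow since $S^k_\varepsilon\subset S^k_{\varepsilon,r}$. For (ii):
\begin{itemize}
\item The rectifiable Reifenberg theorem gives $k$-rectifiability of $S^k_\varepsilon$ for even $k$, together with upper Ahlfors regularity.
\item The inclusion above transfers this to odd $k$ with $d_k=k-1$, and taking the union over $\varepsilon=1/i$ gives it for $S^k$.
\item $S^{2\ell-2}\subset S^{2\ell-1}$ is trivial. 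The reverse inclusion is the tangent-map form of Lemma~\ref{lemma: symmetry improvement}, combined with Lemma~\ref{lemma: stratification for SJholo}.
\item Uniqueness of the symmetry plane at $\mathcal H^{d_k}$-a.e. point follows as in Naber--Valtorta from the summability of $\beta_2$ along scales. This forces every tangent map to be $d_k$-symmetric along the approximate tangent plane of the rectifiable set. By the improvement lemma, the maximal symmetry space of a tangent map is $j_x$-invariant, and it has dimension exactly $d_k$ at points of $S^{d_k}$, so it coincides with that plane, which is therefore $j_x$-invariant.
\end{itemize}

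I expect the main obstacle to be the $L^2$ approximation theorem and the rigidity/cone splitting on a Riemannian, non-flat almost complex domain. There, $J$-holomorphic maps are not harmonic, so Rivière's system \eqref{eq: intro antisymmetric system} and the stationarity identity must supply all the needed error control. A second difficulty is the possible energy defect in compactness limits. I would handle it by working only with the stationarity identity, which passes to varifold-type limits, and with $L^2$-closeness, rather than with $J$-holomorphicity of energy densities.
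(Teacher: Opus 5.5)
Your proposal is correct and follows the paper's own route. The Naber--Valtorta machinery (quantitative rigidity, cone splitting, $L^2$ subspace approximation, and the covering argument with the discrete and rectifiable Reifenberg theorems) gives the $r^{2m-k}$ estimates and rectifiability. The contradiction--compactness upgrade of Lemma~\ref{lemma: symmetry improvement} gives $S^{2\ell-1}_{\varepsilon,r}\subset S^{2\ell-2}_{\delta,r}$, which yields the odd-$k$ estimates, $S^{2\ell-2}=S^{2\ell-1}$ and the $d_k$-rectifiability exactly as in Lemmas~\ref{lemma: quantitative symmetry improvement}--\ref{lemma: quantitative symmetry improvement stratification}.

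Two minor points. First, your step~3 must keep the hypothesis that $u$ is $(0,\delta)$-symmetric on the ball, as in Theorem~\ref{thm: L2 subspace approximation}; without it the $\beta_2$ bound fails, for instance for maps homogeneous about an affine plane missing the center. Second, your argument that the a.e.-unique plane is $j_x$-invariant is correct and supplies a step the paper omits: the maximal symmetry space of a tangent map is $j_x$-invariant and has dimension exactly $d_k$, so it coincides with that plane.
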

	
	Here { a set $E$ is} countable $d$-rectifiable, { if}, up to a
	$\mathcal H^d$-null set, the set $E$ is covered by countably many Lipschitz
	images of subsets of $\R^d$. The local upper Ahlfors  { $d_k$-regular} in the
	theorem  { refers to} 
	\[
	\mathcal H^{d_k}\bigl(S^k_\varepsilon(u)\cap B_\rho(x)\bigr)
	\leq C_\varepsilon\rho^{d_k}, \ \rho>0.
	\]
	The estimates in Theorem~\ref{thm: stratification of SJholo} have the
	dimensional order $r^{2m-d_k}$. For odd $k$, the almost complex equation improves the exponent
	by one compared with the estimate associated with a general real
	$k$-dimensional stratum. In particular, locally on $M$,
	\[
	\overline{\dim}_{\rm Min}S^k_\varepsilon(u)\leq d_k,
	\qquad
	\dim_{\mathcal H}S^k(u)\leq d_k.
	\]
	
	Our second main result 
    { improves} the regularity conclusion of
	\cite[Theorem 1.6]{Wang-03-CCM} by showing that the absence of
	holomorphic spheres directly implies smoothness, which is  different  { from } the case of stationary harmonic maps \cite{Naber-V-2017}.
	
	\begin{theorem}[Regularity without holomorphic spheres]
		\label{thm: N no holo S^2 implies u smooth}
		{For $m\ge 2$,} let $u\in H^1(M^{2m},N)$ be a stationary
		$J$-holomorphic map.  If every smooth $J$-holomorphic map
		$S^2\cong\CP^1\to N$ is constant, then $u\in C^\infty(M,N)$.
	\end{theorem}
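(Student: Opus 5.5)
We argue by contradiction and combine the $\varepsilon$-regularity Theorem~\ref{thm: ep-regularity} with a dimension reduction driven by tangent maps. Suppose $\sing(u)\neq\emptyset$. By monotonicity and $\varepsilon$-regularity, $x\in\sing(u)$ exactly when the density $\Theta(u,x)=\lim_{r\to0}r^{2-2m}\int_{B_r(x)}|du|^2$ satisfies $\Theta(u,x)\ge\varepsilon_0$. Fix such an $x$ and a tangent map $v$ on $T_xM\cong\mathbb C^m$ (constant structure $j_x$). The first task is to show that the blow-up converges \emph{strongly} in $H^1_{\loc}$. By Wang's compactness theorem \cite[Theorem 1.4]{Wang-03-CCM}, recalled above, any defect measure of a weakly convergent sequence of stationary $J$-holomorphic maps lives on a $(2m-2)$-rectifiable set, and concentration produces a non-constant $J$-holomorphic sphere in $N$. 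Our hypothesis rules this out, so the defect vanishes and $u_{x,r_i}\to v$ strongly. Consequently $v$ is a nonconstant, degree-zero homogeneous, stationary $J$-holomorphic map with $\Theta(v,0)=\Theta(u,x)\ge\varepsilon_0$. The same strong compactness holds for every later blow-up.

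Next we run Federer's dimension reduction, using the symmetry improvement Lemma~\ref{lemma: symmetry improvement}. Among all iterated tangent maps, choose $v$ with maximal dimension of the translation-invariant subspace $V$. By the Lemma, $V$ is $j_x$-invariant, so $\dim V=2\ell$. Then $v$ factors as a homogeneous stationary $J$-holomorphic map $w$ on $\mathbb C^{m-\ell}=V^\perp$, and $w$ is singular only at $0$: at any other singular point, a tangent map would gain an extra translation direction by strong convergence and upper semicontinuity of density, contradicting maximality. So $w$ is smooth on $\mathbb C^{m-\ell}\setminus\{0\}$ and homogeneous of degree zero. We must have $m-\ell\ge 2$, since for $m-\ell=1$ a homogeneous map of $\mathbb C$ is smooth away from $0$, hence smooth and constant by Ye \cite{Ye-1994}.

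The key step is to produce a holomorphic sphere from $w$. Degree-zero homogeneity means $dw(\partial_r)=0$. The equation then gives $dw(j\partial_r)=J\,dw(\partial_r)=0$. Hence $w$ is constant along the complex lines $\mathbb C^*\cdot y$, that is, along the Hopf fibres, and $w$ descends to a smooth map $\bar w:\CP^{m-\ell-1}\to N$. The quotient complex structure makes $\bar w$ $J$-holomorphic, since $j_x$ preserves the horizontal distribution of $S^{2(m-\ell)-1}\to\CP^{m-\ell-1}$ modulo the fibre directions, on which $dw$ vanishes. Because $w$ is nonconstant, $\bar w$ is nonconstant, so some projective line $\CP^1\subset\CP^{m-\ell-1}$ has nonconstant restriction of $\bar w$. (If $\bar w$ were constant on all lines through a point it would be constant, as these lines cover the space.) This gives a nonconstant smooth $J$-holomorphic sphere in $N$, a contradiction. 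Therefore $\sing(u)=\emptyset$ and $u\in C^\infty$.

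We expect the main obstacle to be the strong-convergence step, namely extracting from the defect measure a genuinely \emph{smooth} nonconstant $J$-holomorphic sphere, as the hypothesis requires. We would take this from Wang's blow-up analysis: at a generic concentration point, a transverse 2D slice rescaling yields a $J$-holomorphic map from $\mathbb C$ with finite energy, which is smooth by Ye and extends over $\infty$ by removable singularity. The descent to $\CP^{m-\ell-1}$ needs only the first-order equation and the smoothness of $w$ off the origin, so it is comparatively routine.
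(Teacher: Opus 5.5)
Your proof is correct. It reaches the contradiction by a different route from the paper, which argues by induction on $m$.

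\textbf{The paper's route.} At a singular point it takes a nonconstant tangent map $T$ on $\mathbb{C}^\ell$, with strong convergence supplied by Corollary~\ref{coro: compactness of SJholo}. It uses $\partial_r T=0$ and $dT(j_0\partial_r)=0$ to descend $T$ to a nonconstant map $\phi$ on $\CP^{\ell-1}$. It then invokes the theorem itself in complex dimension $\ell-1$ to conclude that $\phi$ is smooth, before restricting to a projective line. The base case $m=2$ is treated separately: a singular point $z_0\neq0$ of the tangent map would force its singular set to contain the complex line through $z_0$, contradicting $\mathcal H^{2}(\sing)=0$.

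\textbf{Your route.} You avoid induction entirely. You select an iterated tangent map with maximal spine $V$ and use Lemma~\ref{lemma: symmetry improvement}: a singular point $y\notin V$ would give a nonconstant blow-up invariant along $V+\mathbb{R}y$, hence along an even-dimensional space of dimension $\dim V+2$. This yields $\sing(v)\subset V$. So $w$ is smooth off the origin, and the descended map on $\CP^{m-\ell-1}$ is smooth outright. No regularity statement on $\CP^{k}$ is ever needed.

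\textbf{What each approach buys.} Your argument stays entirely inside stationary maps on flat $\mathbb{C}^m$. The paper's inductive step tacitly needs $\phi$ to be a \emph{stationary} $H^1$ $J$-holomorphic map of $\CP^{\ell-1}$ with the Fubini--Study metric, since that is the class the induction hypothesis covers. This is true, because the Hopf fibration is a Riemannian submersion with totally geodesic fibres and vector fields lift horizontally and equivariantly, but the paper does not verify it. Your argument needs neither that fact nor a separate base case.

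The price is the standard Federer-type bookkeeping, which you supply:
\begin{itemize}
\item strong convergence and stationarity at every stage of the iterated blow-up, from the absence of spheres;
\item nonconstancy of each blow-up at a singular point, from the density lower bound;
\item invariance of a blow-up of a $0$-homogeneous map at $y\neq0$ along both $V$ and $\mathbb{R}y$.
\end{itemize}

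One phrasing point: ``$x\in\sing(u)$ exactly when $\Theta(u,x)\ge\varepsilon_0$'' holds only after adjusting the constant. Regular points have density $0$, and singular points have density bounded below by some $\varepsilon_1>0$ coming from monotonicity and $\varepsilon$-regularity. That lower bound is all your argument uses.
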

	
	By \cite[Proposition 4.1]{Barbaro-Fagioli-RiosOrtiz-2022}, the complex tori $N=\mathbb C^n/\Gamma$, where $\Gamma$ is a lattice of rank $2n$, with its standard complex structure admits no nonconstant smooth holomorphic spheres. More generally, any compact complex submanifold $\widehat{N}$ of the complex torus $N$ (with its induced complex structure) admits no nonconstant smooth holomorphic spheres. Indeed, let $i:\widehat{N}\hookrightarrow N$ be a compact complex submanifold of $N$. For any holomorphic map $f:\CP^1\to \widehat{N}$, by \cite[Proposition 4.1]{Barbaro-Fagioli-RiosOrtiz-2022}, $i\circ f:\CP^1\to N$ is constant. Hence $f$ is constant.
	
	We next explain the main ideas of the proofs. For
	Theorem~\ref{thm: stratification of SJholo}, the analytic starting
	points are the $\e$-regularity theorem, the monotonicity
	inequality, the compactness and defect-measure theory.  { Since there may be loss of Dirichlet energy under weak convergences},  it is necessary to
	keep track of both tangent maps and  { the defect} measures. Small drops
	of the normalized energy control radial derivatives and thus lead to
	quantitative homogeneity. Cone splitting then detects additional
	translation symmetries, while the $J$-holomorphic equation supplies the
	even-dimensional symmetry improvement.
	
	The next step is an $L^2$ subspace approximation estimate, which bounds
	Jones' $\beta$-numbers by the radial energy on suitable annuli. This
	connects the analytic information from monotonicity to the geometric
	Reifenberg criteria of Naber--Valtorta~\cite{Naber-V-2017}. A  standard covering argument, due to Naber-Valtorta \cite{Naber-V-2017,Naber-V-2018}, gives the volume estimates and rectifiability. The main task is to establish these
	ingredients for stationary $J$-holomorphic maps while retaining the
	additional information  { induced} by the almost complex structures.
	
	The proof of Theorem~\ref{thm: N no holo S^2 implies u smooth} uses
	the complex symmetry in a different way. The absence of holomorphic
	spheres eliminates energy loss in blow-up sequences. A nonconstant
	homogeneous tangent map at a singular point is invariant in the radial
	direction and, by the $J$-holomorphic equation, in the corresponding
	circular direction. Then it reduces to complex projective
	space. The argument proceeds by induction on the complex dimension:
	regularity of the descended map permits restriction to a projective
	line, producing a nonconstant $J$-holomorphic sphere, which contradicts
	the assumption on the target.
	
	\medskip
	\noindent\textbf{Organization of this paper.}
	In Section~\ref{sec:Basic Properties}, we discuss $\e$-regularity, monotonicity, compactness, unique continuation and tangent
	maps. In Section~\ref{sec:quantitative ep regularity}, we develops quantitative
	symmetry, cone splitting, and the symmetry improvement for the strata.
	In Section~\ref{sec:reifenberg}, we recall the quantitative Reifenberg
	theorems and establish the $L^2$-subspace approximation estimate.
	Section~\ref{sec:covering lemma} contains the needed covering arguments.
	In the final section, Section~\ref{sec:proofs}, we prove  the main theorems.
	
	\medskip
	\noindent\textbf{Notation of this paper.}
	The real dimension of the domain is $2m$. We write $B_r(x)$ for a
	geodesic ball, $\vol_g$ for Riemannian volume, and $\mathcal H^k$ for
	$k$-dimensional Hausdorff measure. The notation $D$ or $\partial$ is used
	for coordinate derivatives, while $\nabla$ denotes covariant
	differentiation. In local arguments, the metric and almost complex
	structure are pulled back by the exponential map.  Unless otherwise specified, constants
	depend only on the indicated energy bound and the fixed almost
	Hermitian geometry of the domain and target.

	\section{Basic Properties}\label{sec:Basic Properties}

	\subsection{Monotonicity formula and partial regularity}\label{subsec:mono for and partial regularity}
	
	The following characterization of stationarity is well-known. 
	\begin{proposition}[\cite{Price-83}]
		\label{prop: stationary condition}
		A weakly $J$-holomorphic map $u\in H^1(M,N)$ is stationary if and only if
		\begin{equation}\label{eq: stationary condition}
			\int_M |Du|^2 \Div_g(X) - 2 \langle du(\nabla^M_{e_i} X), du(e_i) \rangle = 0,
		\end{equation}
		for any smooth vector field $X$ with compact support. Here $\nabla^M$ denotes the Levi–Civita connection on $M$, $\Div_g$ denotes the divergence on $M$ and $\{e_i\}_{i=1}^{2m}$ is an orthonormal { tangent} frame field { on $M$} with respect to $g$. 
	\end{proposition}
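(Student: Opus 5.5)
The plan is to compute the first variation of $E(u\circ F_t)$ directly and identify the resulting expression with \eqref{eq: stationary condition}. The computation does not use the $J$-holomorphic equation; it is the classical domain-variation calculation for the Dirichlet energy, valid for any $u\in H^1(M,N)$. Let $X$ be a smooth compactly supported vector field and $F_t$ its flow, so $F_0=\id$ and $\partial_t F_t|_{t=0}=X$. Since $F_t$ is a diffeomorphism for small $|t|$, the chain rule for Sobolev maps gives $u_t:=u\circ F_t\in H^1(M,N)$ with $du_t(x)=du(F_t(x))\circ dF_t(x)$. The first step is to change variables $y=F_t(x)$, with $G_t:=F_t^{-1}=F_{-t}$:
\[
E(u_t)=\int_M \big|du(y)\circ dF_t(G_t(y))\big|_g^2\,\big|\det dG_t(y)\big|\,d\vol_g(y).
\]
After this substitution $t$ enters only through the smooth coefficients $dF_t\circ G_t$ and the Jacobian, while $du(y)$ is fixed and in $L^2$.

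The second step is to differentiate under the integral sign at $t=0$. This is justified by dominated convergence: the coefficients are smooth in $(t,y)$, uniformly bounded with their $t$-derivatives on the compact support of $X$, and $|du|^2\in L^1$. The two derivatives needed are $\frac{d}{dt}\big|_{t=0}\det dG_t=-\Div_g X$ and $\frac{d}{dt}\big|_{t=0}\,dF_t(G_t(y))=\nabla^M X(y)$ as an endomorphism of $T_yM$.

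The second formula is the step most likely to cause trouble. The derivative of $dF_t$ at the moving point $G_t(y)$ must be expressed covariantly. To do this, work in normal coordinates at $y$, or use the torsion-free identity $\frac{D}{dt}dF_t(e)=\nabla_{dF_t(e)}X$ together with the correction from moving the base point. At $t=0$ the contributions combine to $\nabla_e X$, and one must also account for the variation of the metric in the norm $|\cdot|_g^2$ along the path. Alternatively, one can note that $|du\circ A|^2_g=\sum_i\langle du(Ae_i),du(Ae_i)\rangle$ and differentiate only $A=dF_t\circ G_t\in\mathrm{End}(T_yM)$, which avoids the base-point issue entirely because everything lives at the fixed point $y$.

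Combining the two derivatives gives
\[
\frac{d}{dt}\Big|_{t=0}E(u_t)=\int_M 2\sum_i\langle du(\nabla^M_{e_i}X),du(e_i)\rangle-|du|^2\Div_g X\,d\vol_g,
\]
which is the negative of the left-hand side of \eqref{eq: stationary condition}, with $|Du|^2=|du|^2_g$. Hence the first variation vanishes for every $X$ exactly when \eqref{eq: stationary condition} holds for every $X$, which proves both directions simultaneously. This matches the computation in \cite{Price-83}.
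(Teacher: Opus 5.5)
Your proposal is correct. It is the classical domain-variation computation of Price, which the paper cites without reproducing: change variables $y=F_t(x)$, then use $\frac{d}{dt}\big|_{t=0}\det dG_t=-\Div_g X$ and that the frame map has derivative $\nabla^M X$, which yields the negative of the left-hand side of \eqref{eq: stationary condition}. One small precision issue: $dF_t(G_t(y))$ maps $T_{G_t(y)}M$ to $T_yM$, so it is an endomorphism of $T_yM$ only after composing with parallel transport along the flow line; since parallel transport is an isometry no extra metric term appears, and the $t$-derivative at $0$ is then $\nabla^M X(y)$ as you claim.
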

	
	Our starting point of analysis is the monotonicity inequality.
	\begin{proposition}\cite[Proposition 3.1]{Wang-03-CCM}\label{prop: monotonic}
		For $m\ge 2$. Let $u \in H^1(M, N)$ be a stationary $J$-holomorphic map. Then there exist $R_0, C_0 > 0$ depending only on $(M, g)$ such that for any $a \in M$, $0 < r \leq R \leq R_0$, there holds
		\begin{equation}\label{eq: monotonic}
			e^{C_0 R} R^{2-2m} \int_{B_R(a)} |Du|^2 - e^{C_0 r} r^{2-2m} \int_{B_r(a)} |Du|^2 \geq 2 \int_{B_R(a) \setminus B_r(a)} \rho^{2-2m} \left| \partial_\rho u \right|^2,
		\end{equation}
		where $\rho=|x-a|$, $\partial_{\rho}u=Du\cdot \frac{x-a}{|x-a|}$. 
	\end{proposition}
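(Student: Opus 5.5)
The plan is to derive the monotonicity inequality from the stationarity identity \eqref{eq: stationary condition} of Proposition~\ref{prop: stationary condition}. We test it with a radial vector field cut off in normal coordinates, exactly as for stationary harmonic maps. The almost complex structure plays no role here: only stationarity with respect to domain variations is used. Fix $a\in M$ and work in geodesic normal coordinates centered at $a$ on $B_{R_0}(a)$. Take $R_0$ below the injectivity radius and small enough that $g_{ij}=\delta_{ij}+O(|x|^2)$ and $\Gamma^k_{ij}=O(|x|)$, with constants depending only on $(M,g)$. For $\rho\le R_0$ and a cutoff $\phi_\e(|x|)$ approximating $\chi_{[0,\rho]}$, set $X(x)=\phi_\e(|x|)\,x$. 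Then
\[
\nabla_{e_i}X=\phi_\e\, e_i+\phi_\e'(|x|)\,\frac{\langle x,e_i\rangle}{|x|}\,x+O(|x|^2)\phi_\e ,
\qquad \Div_g X=2m\,\phi_\e+|x|\phi_\e'+O(|x|^2)\phi_\e .
\]

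Inserting this into \eqref{eq: stationary condition} and letting $\e\to0$ gives, for a.e. $\rho$, the identity
\[
(2-2m)\int_{B_\rho}|Du|^2+\rho\int_{\partial B_\rho}|Du|^2-2\rho\int_{\partial B_\rho}|\partial_\rho u|^2 = O\Big(\rho^2\int_{B_\rho}|Du|^2\Big)+O\Big(\rho^2\!\!\int_{\partial B_\rho}|Du|^2\Big)\rho .
\]
Here the error terms come from the deviation of $g$ from Euclidean, i.e. from the $O(|x|^2)$ corrections in $\nabla X$ and $\Div_g X$. The main point requiring care is to write these errors as a small multiple of $\int_{B_\rho}|Du|^2$ plus a $\rho\cdot O(\rho)$ multiple of the boundary term. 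Once that is done, the identity becomes a differential inequality for $f(\rho)=\int_{B_\rho}|Du|^2$:
\[
\frac{d}{d\rho}\Big(\rho^{2-2m}f(\rho)\Big)\ \ge\ 2\rho^{2-2m}\int_{\partial B_\rho}|\partial_\rho u|^2 - C_0\,\rho^{2-2m}f(\rho) - C\rho\,\frac{d}{d\rho}\big(\rho^{2-2m}f\big)\text{-type terms}.
\]
I would absorb the boundary-error term into the left side by requiring $C\rho\le \tfrac12$, and then use the factor $1+O(\rho)\le e^{C\rho}$. This gives
\[
\frac{d}{d\rho}\Big(e^{C_0\rho}\rho^{2-2m}f(\rho)\Big)\ge 2\rho^{2-2m}\int_{\partial B_\rho}|\partial_\rho u|^2
\]
in the distributional sense, where $\rho$ is geodesic distance and $\partial_\rho u$ is the radial derivative (equal to the Euclidean one in normal coordinates). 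Since $f$ is absolutely continuous, integrating from $r$ to $R$ and using the coarea formula turns the right side into $2\int_{B_R\setminus B_r}\rho^{2-2m}|\partial_\rho u|^2$, which is \eqref{eq: monotonic}.

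The main obstacle is the bookkeeping of the metric error terms. The aim is to show they are bounded by $C\rho\,|Du|^2$ pointwise relative to the leading terms, so that the multiplier $e^{C_0 R}$ suffices. The difference between the $g$-norm and the Euclidean norm of $Du$, and between $\vol_g$ and Lebesgue measure, must be absorbed the same way, possibly after slightly adjusting the definition of $|Du|^2$ up to factors $1+O(\rho^2)$.
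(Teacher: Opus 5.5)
Your proposal is correct and is the standard Price-type derivation: test the stationarity identity with $\phi(|x|)\,x$ in normal coordinates, then absorb the $O(\rho^2)$ metric errors into the weight $e^{C_0\rho}$. The paper gives no proof of its own and simply quotes Wang's Proposition 3.1, which rests on the same computation; also, no redefinition of $|Du|^2$ is needed, since by the Gauss lemma the $\phi'$-terms agree with their Euclidean counterparts up to factors $1\pm O(\rho^2)$, these factors can be divided out because both boundary integrands are nonnegative and comparable, and the resulting factor $1-O(\rho^2)$ on the radial term is compensated by $e^{C_0\rho}$ for $\rho\le R_0$.
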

	
	For our later application, we introduce
	\begin{equation}\label{eq: eDJ energy}
		\Phi_{u}(p, r)=e^{C_0r}r^{2-m}\int_{B_r(p)}|D{ u}|^2dx,
	\end{equation}
	\begin{equation}\label{eq: nablaJ energy}
		\Phi^g_{u}(p, r)=r^{2-m}\int_{B_r(p)}|\nabla { u}|^2d\vol_g,
	\end{equation}
	and
	\begin{equation}\label{eq: DJ energy}
		\Phi^\delta_{u}(p, r)=r^{2-m}\int_{B_r(p)}|D{ u}|^2dx.
	\end{equation}
	By the monotonicity formula \eqref{eq: monotonic}, we know that $\Phi_u(x,r)$ is increasing in $r$, and hence $\lim_{r\downarrow 0}\Phi_u(x,r)$ exists.
	\begin{definition}[Density function]\label{def:density function}
    { Let $u \in H^1(M, N)$ be a stationary $J$-holomorphic map.}
		For $x\in M$, we define the density function { of $u$ at $x$}  as
		$$
		\Phi_u(x):=\lim\limits_{r\downarrow0}\Phi_u(x,r)=\lim\limits_{r\downarrow0}\Phi^g_u(x,r)=\lim\limits_{r\downarrow0}\Phi^\delta_u(x,r).
		$$
	\end{definition}

	To state our partial regularity theorem below, we first recall the definition of Morrey space. Let $1\le p<\infty$ and $0\le s\le n$. The Morrey space $M^{p,s}(U)$ consists of functions $f\in L^{p}(U)$ such that
	$$
	\big\|f\big\|_{M^{p,s}(U)}\equiv\sup_{x\in U,0<r<\mathrm{diam}(U)}r^{-s/p}\|f\|_{L^{p}(B_{r}(x)\cap U)}<\infty.
	$$
	
	The main partial regularity theorem proved by Wang \cite{Wang-03-CCM} reads as follows. In case of harmonic maps, similar result has been obtained by Evans \cite{Evans-1991} { and Bethuel \cite{Bethuel-93}}. 
	
	Suppose that $(N,h,J)$ is isometrically embedded in $\mathbb{R}^L$. At any point $q\in N$, we extend $J$ from $T_qN$ to $T_q\R^L=T_qN\oplus(T_qN)^\perp$, denoted by $\widetilde{J}$, where for any $X\in T_q\R^L$, $$
	\widetilde{J}(X):= (J(X^{\top}),0).
	$$Here $X^\top\in T_qN$ is the tangent part of $X$. Equivalently, after an orthogonal tranformation if necessary, we have $$
	\widetilde{J}_q=\left(\begin{matrix}
		J_q&0\\
		0&0\\
	\end{matrix}\right).
	$$
	We observe that although $\widetilde{J}$ is not an almost complex structure, $\widetilde{J}$ is anti-symmetric. Indeed, for any $q\in N,X,Y\in T_q\R^L$, there holds$$
	\left\langle \widetilde{J}_q(X),Y\right\rangle=\left\langle J_q(X^\top), Y^\top\right\rangle =\left\langle X^\top, -J_q(Y^\top) \right\rangle =-\left\langle X,\widetilde{J}_q(Y) \right\rangle.  
	$$ 
	Thus, we can rewrite \eqref{eq: def of J holo}. Suppose $\{x_i\}_{i=1}^{2m}$ is a local orthonormal basis on $M$, $j=j_a^b dx^a\otimes \frac{\partial}{\partial x^b}$, $\widetilde{J}=\widetilde{J}_k^\ell dy^k\otimes \frac{\partial}{\partial y^\ell}$. Then the $J$-holomorphic equation \eqref{eq: def of J holo} becomes
	\begin{equation}\label{eq: rewrite Jholo}
		\frac{\partial u^k}{\partial x^a}=-j^i_a\widetilde{J}^k_\ell(u)\frac{\partial u^\ell}{\partial x^i}.
	\end{equation}

	\begin{theorem}\label{thm:new ep-regularity}
		Let $u\in H^1(M^{2m},N)$ be a weakly $J$-holomorphic map. Then 
		\begin{enumerate}
			\item $u$ satisfies the conservation law
			\begin{equation}\label{eq: conservation LAW in thm}
				\Div_g\bigl(\widetilde{J}(u)\,\grad_g u-u\otimes\Div_g(g^{-1}j)\bigr)=0.
			\end{equation}
			
			\item There exists $\e=\e(m,M,N,g,h,j,J)>0$ such that if for $B_r(p)\subset M$, 
            $$\|du\|_{M^{2, 2m-2}(B_r(p))}<\e, $$ then $u$ is smooth in $B_{r/2}(p)$.
		\end{enumerate}
	\end{theorem}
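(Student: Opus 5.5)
We argue in two stages: first the conservation law, which comes from writing the $J$-holomorphic equation in divergence form, and then the $\e$-regularity, obtained by casting the equation into the Rivière form \eqref{eq: intro antisymmetric system} and invoking Rivière--Struwe \cite{RS-2008}.

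\textbf{Conservation law.} In local coordinates, \eqref{eq: rewrite Jholo} reads $du=-\widetilde J(u)\,du\circ j$, since $du$ takes values in $TN$ and $\widetilde J$ agrees with $J$ there. Apply $\widetilde J(u)$ to both sides and use $\widetilde J^2=-P_{TN}$, where $P_{TN}$ is the projection onto $TN$, together with $P_{TN}du=du$. This gives
\[
\widetilde J(u)\,du = du\circ j .
\]
Raising the index with $g$ turns this into
\[
\widetilde J(u)\,\grad_g u = (g^{-1}j)\cdot du ,
\]
which is a vector field on $M$ with values in $\R^L$. For a smooth map $u$, the Leibniz rule gives
\[
\Div_g\bigl((g^{-1}j)\cdot du\bigr)=d\bigl(\Div_g(g^{-1}j)\cdot u\bigr)\text{-type terms}+ (g^{-1}j)^{ab}\nabla_a\partial_b u .
\]
The second-derivative term vanishes because $g^{-1}j$ is antisymmetric: compatibility of $g$ with $j$ makes $g^{ab}j_b^c$ skew, while $\nabla^2u$ is symmetric. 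What remains is exactly $\Div_g\bigl(u\otimes \Div_g(g^{-1}j)\bigr)$ once the zeroth-order term is absorbed, which yields \eqref{eq: conservation LAW in thm}. For $u\in H^1$ the same identity holds weakly. To see this, test against $\varphi\in C_c^\infty$ and use the antisymmetry to write $\int \partial_b u\,\partial_a(j^{ab}\varphi)$. This integral makes sense in $L^1$, and the identity follows by approximating $u$ by smooth maps in $H^1(M,\R^L)$, which is legitimate because only $u$ enters linearly at that stage.

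\textbf{Rivière form.} Next we differentiate once more to obtain a second-order system. Write
\[
\Delta_g u=\Div_g\bigl(\grad_g u\bigr)=-\Div_g\bigl(\widetilde J(u)\,(g^{-1}j)\cdot du\bigr),
\]
using $du=-\widetilde J\,du\circ j$. Expanding the divergence produces three kinds of terms. The derivative falling on $du$ is controlled by the conservation law. The derivative falling on $\widetilde J(u)$ gives $D\widetilde J(u)\,du\cdot (g^{-1}j) du$, where $D\widetilde J$ denotes the differential of $\widetilde J$ as a function on $N$. The remaining terms are of lower order, with coefficients involving $\nabla j$ and $\nabla g$. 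We then rearrange these terms as $\Omega\cdot du$ with
\[
\Omega^{k}_{\ell}= -(D\widetilde J)^k_{\ell i}(u)\,(g^{-1}j)du^i \;+\; (\text{antisymmetrized lower-order terms}).
\]
We expect the main obstacle to be arranging that $\Omega$ is antisymmetric. Our first attempt is to symmetrize in the standard way, as for harmonic maps. Any term of the form $A^k_\ell\, du^\ell$ with $A$ symmetric, such as a term coming from the second fundamental form or from $D\widetilde J$ paired with $\nu$ in the normal directions, is rewritten as $(A^k_\ell-A^\ell_k)\,du^\ell$. This is allowed because $A^\ell_k du^\ell=0$, a consequence of $du\in TN$ and of orthogonality relations obtained by differentiating $\langle \widetilde J(u)X,Y\rangle=-\langle X,\widetilde J(u)Y\rangle$. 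The lower-order terms, which are linear in $u$ with smooth coefficients, are handled in the same way, using $|u|\le C$, and contribute $O(1)$ to $\Omega$. Together these give the growth bound $|\Omega|\le C(|du|+1)$.

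\textbf{$\e$-regularity.} The growth bound gives $\|\Omega\|_{M^{2,2m-2}(B_r)}\le C\|du\|_{M^{2,2m-2}(B_r)}+Cr$. Therefore, after shrinking $r$ if necessary, the Rivière--Struwe theorem \cite{RS-2008} applies in the Morrey-small regime for systems $-\Delta_g u=\Omega\cdot du$ with antisymmetric $\Omega$. The variable metric is treated as a perturbation, using a Coulomb gauge in Morrey spaces, and we conclude that $u$ is Hölder continuous in $B_{r/2}(p)$. Once $u$ is continuous, $u$ takes values in a single coordinate chart of $N$ on small balls. In that chart the first-order elliptic system \eqref{eq: rewrite Jholo} has continuous coefficients $\widetilde J(u)$, so standard $L^p$ and Schauder bootstrapping gives $C^{1,\alpha}$ and then $C^\infty$ regularity.
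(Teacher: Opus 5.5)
Your overall route matches the paper's. You derive the conservation law from the skew-symmetry of $g^{-1}j$, write $-\Delta_g u=\Div_g\bigl(\widetilde J(u)\widetilde J(u)\grad_g u\bigr)$, and expand it using the conservation law. You then apply Rivi\`ere--Struwe with a variable-coefficient extension and bootstrap.

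\textbf{The gap.} It lies in the step you flag as the main obstacle, the antisymmetry of $\Omega$. The mechanism you propose does not work as written.
\begin{enumerate}
\item Rewriting $A^k_\ell du^\ell$ as $(A^k_\ell-A^\ell_k)du^\ell$ for a \emph{symmetric} $A$ gives zero, so you are in effect discarding the term.
\item The harmonic-map trick is for a non-symmetric $A$, such as $\nu^k d\nu^\ell$, satisfying $A^\ell_k du^\ell=0$. You never exhibit that identity; the ``orthogonality relations'' are left unspecified.
\item Zeroth-order terms linear in $u$ cannot be put in the form $\Omega\cdot du$ with $\Omega=O(1)$ at all.
\end{enumerate}

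\textbf{What is missing.} No symmetrization is needed, and the terms you worry about never arise. The identity $\grad_g u=-\widetilde J(u)^2\grad_g u$ is exact, so no second fundamental form enters. Also, $\widetilde J$ is differentiated only along $u$: $\partial_c(\widetilde J(u))=D\widetilde J(u)[\partial_c u]$ with $\partial_c u\in T_uN$, so no normal derivatives of $\widetilde J$ appear. Carrying out your expansion gives exactly the paper's potential
\[
-\Delta_g u=\langle\Omega,\grad_g u\rangle_g,\qquad \Omega=\Div_g\bigl(\widetilde J(u)\otimes g^{-1}j\bigr).
\]
Up to index conventions, $\Omega^b=\partial_c(\widetilde J(u))\,(g^{-1}j)^{cb}+\widetilde J(u)\,\Div_g(g^{-1}j)^b$. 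This is $\mathfrak{so}(L)$-valued automatically. Indeed, $\widetilde J(q)$ is skew for every $q\in N$, and $\mathfrak{so}(L)$ is a linear space, so every derivative of $\widetilde J(u)$ is skew as well. Your own leading term $-(D\widetilde J)^k_{\ell i}(u)(g^{-1}j)du^i$ is already antisymmetric in $(k,\ell)$.

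The only would-be zeroth-order term is $\widetilde J(u)\,u\,\Div_g\Div_g(g^{-1}j)$. It vanishes because $\Div_g\Div_g$ of the skew tensor $g^{-1}j$ is zero. Your conservation-law step needs this same fact and should state it explicitly, rather than saying the zeroth-order term is ``absorbed.''

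With $\Omega$ identified this way, the rest goes through as in the paper: the bound $|\Omega|\le C(|du|+1)$, the resulting Morrey smallness, the Rivi\`ere--Struwe step and the bootstrap.
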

	
	\begin{proof}
		Throughout this proof, we use the Einstein summation convention. From now on we let $\Div_g X= \frac{1}{\sqrt{g}}\partial_i(\sqrt{g}X^i)$
         { where} $X=X^i\partial_i$, $\grad_g f=g^{ij}(\partial_jf)\partial_i$ for function $f$, and $\Delta_g=\Div_g\grad_g$. 
		Since $j$ is compatible with $g$, we have $g^{-1}j$ is anti-symmetric, i.e. $g^{ai}j_i^b=-g^{bi}j_i^a$. Then the  equation 
        { of $J$-holomorphic maps} is equivalent to 
		\begin{equation}
			\widetilde{J}(u)\,\grad_g u
            = -j\,\grad_g u.
		\end{equation}
		Thus, consider vector field $\left(u^k\partial_a(g^{ai}j_i^b)\right) \partial_b$, applying the definition of $J$-holomorphic maps we have
		$$
		\begin{aligned}
			u^k\partial_a(g^{ai}j_i^b) =& \partial_a(u^kg^{ai}j_i^b)\ 
            {-}\ \partial_a u^k g^{ai} j_i^b\\
			=& \partial_a(u^kg^{ai}j_i^b)\ {+}\ \partial_a u^k g^{bi} j_i^a\\
			=& \partial_a(u^kg^{ai}j_i^b)\ {+}\  g^{bi}\widetilde{J}^k_\ell(u)\partial_i u^\ell,
		\end{aligned}
		$$
		or equivalently,
		$$
		\Div_g\left(u\otimes g^{-1}j\right)= u \otimes\Div_g\left( g^{-1}j\right) \ {-}\ \widetilde{J}(u)\,\grad_g u.
		$$
		
		Since, for every $k=1,...,L$, $$\Div_g\Div_g(u^kg^{ai}j_i^b)=\sum_{b=1}^{2m} \frac{1}{\sqrt{g}}\partial_b\left(\sum_{a=1}^{2m}\partial_a(\sqrt{g}u^kg^{ai}j_i^b)\right)=0\quad\text{in the sense of distribution,}$$
        { thanks to $\partial^2_{ab}=\partial^2_{ba}$
        and $\sqrt{g}u^k g^{ai}j_{i}^b
        =-\sqrt{g}u^k g^{bi}j_{i}^a$,}
		we have the conservation law:
		\begin{equation}\label{eq: conservation law}
			\Div_g\bigl(\widetilde{J}(u)\,\grad_g u-u\otimes\Div_g(g^{-1}j)\bigr)=0.
		\end{equation}
		
		Based on the conservation law \eqref{eq: conservation law}, we have the following equation 
		\begin{equation}\label{eq: Delta g u equ-1}
			\begin{aligned}
				-\Delta_gu=&-\Div_g(\grad_g u)=\Div_g (\widetilde{J}(u)\,\widetilde{J}(u)\,\grad_g u)\\
				=& \left\langle \grad_g(\widetilde{J}(u)), \widetilde{J}(u)\,\grad_g u\right\rangle_g+ \widetilde{J}(u)\Div_g\left(u \otimes\Div_g(g^{-1}j)\right)\\
				=& \left\langle j\,\grad_g(\widetilde{J}(u)), \grad_g u\right\rangle_g+ \widetilde{J}(u)\Div_g\left(u \otimes\Div_g(g^{-1}j)\right).
			\end{aligned}
		\end{equation} 
		By applying $\Div_g\Div_g(g^{-1}j)=0$, we have
		\begin{equation}\label{eq: Delta g u equ-2}
			\begin{aligned}
				\widetilde{J}(u)\Div_g\left(u \otimes\Div_g(g^{-1}j)\right)= \left\langle \tilde{J}(u)\,\otimes \Div_g(g^{-1}j),\grad_gu\right\rangle_g.
			\end{aligned}
		\end{equation}
		Since $$
		\Div_g\left(\widetilde{J}(u)\otimes g^{-1}j\right)=
		j\,\grad_g\left(\widetilde{J}(u)\right) +\widetilde J(u)\otimes\Div_g(g^{-1}j),
		$$
		\eqref{eq: Delta g u equ-1} implies that
		\begin{equation}\label{eq: Delta g u equ-final}
			-\Delta_g u=\left\langle \Div_g\left(\widetilde{J}(u)\otimes g^{-1}j\right),\grad_g u\right\rangle_g.
		\end{equation}
		Define $\Omega:=\Div_g\left(\widetilde{J}(u)\otimes g^{-1}j\right)$, then \eqref{eq: Delta g u equ-final} becomes $$
		-\Delta_g u=\left\langle \Omega,\grad_g u\right\rangle_g.
		$$
		Note that $\Omega$ is an anti-symmetric  { matrix of order $L$}. Since $J,j,g$ is smooth, we have $\Omega\in L^2(M,TM\otimes\mathfrak{so}(L))$ with $$
		|\Omega|\leq C(|du|+1).
		$$
		Fix $p\in M$ and $r$ sufficiently small such that $B_{r}(p)\subset M$, we have $$\big\|\Omega\big\|_{M^{2,2m-2}(B_r(p))}\le \e.$$ The Rivi\`ere-Struwe theorem \cite[Theorem 1.1]{RS-2008}, with the smooth-coefficient extension described by Schikorra in \cite[Remark 3.3]{Schikorra-2010}, implies the local H\"older continuity of $u$.
		
		Indeed, in local coordinates, equation \eqref{eq: Delta g u equ-final} takes the form
		$$
		-\partial_a\left(\left(\sqrt{g}g^{ab}\right)\partial_bu^k\right)=\left(\sqrt{g}g^{bc}\right)\left(g_{ab}\left(\Omega_k^\ell\right)^a \right) \partial_c u^k,
		$$
		which is exactly the { same form as} in \cite[Remark 3.3]{Schikorra-2010}. 
		
		The proof { of higher order regularity} follows directly from the standard theory of elliptic equations.
	\end{proof}
	
	\begin{remark}
		The potential $\Omega$ constructed above is also divergence-free in the distributional sense. Consequently, equation \eqref{eq: Delta g u equ-final} has a div-curl structure. Thus the localized div--curl lemma by Coifman-Lions-Meyer-Semmes \cite{CLMS-1993} therefore gives $\Delta_g u\in h^1_{loc}$, where $h^1_{loc}$ denotes the local Hardy space in coordinate charts.
	\end{remark}

	In particular, similar to the theory of harmonic maps, when $m=1$, $J$-holomorphic maps are pseudo holomorphic curves, whose smoothness was established by Ye \cite{Ye-1994}.

	\begin{theorem}\cite{Ye-1994}\label{thm: 2dim smooth}
		Assume that $M$ is a compact Riemannian surface without boundary. If $u\in H^1(M,N)$ is a weakly $J$-holomorphic curve, then $u\in C^\infty(M,N)$. 
	\end{theorem}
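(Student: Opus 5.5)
This statement is Ye's theorem, which the paper cites. I would prove it from the conservation-law machinery of Theorem~\ref{thm:new ep-regularity}, using that the problem is conformally invariant when $m=1$. The first step is to work locally in conformal (isothermal) coordinates on a disc $B_{r}(p)\subset M$. In real dimension two, the Morrey norm $\|du\|_{M^{2,0}(B_r(p))}$ is just a supremum of local Dirichlet energies. By absolute continuity of the integral $\int|du|^2$, for every $p$ there is $r_p>0$ with
\[
\sup_{x\in B_{r_p}(p),\,0<\rho<2r_p}\int_{B_\rho(x)\cap B_{r_p}(p)}|du|^2\,d\vol_g<\e .
\]
Here $\e$ is the constant from Theorem~\ref{thm:new ep-regularity}(2) with $m=1$. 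Stationarity and monotonicity are not needed, because in dimension two the smallness condition is automatic at small scales.

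The second step applies Theorem~\ref{thm:new ep-regularity}. Its proof gives the Rivi\`ere-type system $-\Delta_g u=\langle\Omega,\grad_g u\rangle_g$ with $\Omega$ antisymmetric and $|\Omega|\le C(|du|+1)$. I need to check that this derivation used nothing about $m\geq 2$: the conservation law~\eqref{eq: conservation LAW in thm} and the bound on $\Omega$ hold for $m=1$ verbatim. When $2m=2$, this is precisely Rivi\`ere's original setting~\cite{Riviere-2007-Invent}. There $\Omega\in L^2$ antisymmetric on a disc with small $L^2$ norm yields a gauge $P\in W^{1,2}(B,SO(L))$ and $\xi$ with $P^{-1}dP-P^{-1}\Omega P=\ast d\xi$. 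This leads to a conservation law $\Div(A\nabla u-B\nabla^\perp u)=0$ with $A,B\in L^\infty\cap W^{1,2}$. By Wente/Hardy-space estimates this gives $u\in C^{0,\alpha}$, and in fact $u\in W^{2,p}$ after a bootstrap. The Schikorra remark handles the variable coefficients $\sqrt g g^{ab}$. Alternatively I can simply invoke Theorem~\ref{thm:new ep-regularity}(2) with $m=1$, since $M^{2,0}$-smallness is what was established in step one.

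The third step upgrades H\"older continuity to $C^\infty$. Once $u$ is continuous, I localize so that $u(B)$ lies in a coordinate chart of $N$. Equation~\eqref{eq: rewrite Jholo} is a first-order quasilinear Cauchy--Riemann-type system $\bar\partial_j u=Q(u)\partial u$, with $Q(u(x))$ small after a linear change of coordinates at $u(p)$. Standard $L^p$ theory for the $\bar\partial$-operator (Calder\'on--Zygmund) bootstraps: $u\in W^{1,p}$ for all $p$, hence $C^{1,\alpha}$. Differentiating the equation then gives the same type of system for $Du$ with smooth coefficients, so $u\in C^{k,\alpha}$ for all $k$. Since $p$ was arbitrary and $M$ is compact, $u\in C^\infty(M,N)$.

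The main obstacle is the first gain of regularity, from $H^1$ to continuity. This is the critical step because $|du|^2\in L^1$ is borderline in dimension two. I would try first to rely on the antisymmetry of $\Omega$ together with its divergence-free/div-curl structure from the Remark, which puts $\Delta_g u$ in the local Hardy space $h^1$. In two dimensions, $\Delta u\in h^1$ already gives $u\in W^{2,1}\subset C^0$, and that is enough to start the bootstrap.
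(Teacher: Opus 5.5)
The paper gives no proof of this statement; it only cites Ye~\cite{Ye-1994}. Your argument is correct and takes a genuinely different, self-contained route. You obtain the result as the case $m=1$ of the paper's own Theorem~\ref{thm:new ep-regularity}. This works because for $2m=2$ the $M^{2,0}$-smallness is just smallness of local energy, which holds automatically on small discs. Moreover, neither the conservation law nor the Rivi\`ere(--Struwe) step uses $m\ge 2$. What your route buys is that the paper's $\e$-regularity theorem visibly contains Ye's theorem, with no stationarity needed.

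Your fallback in the last paragraph is in fact the cheapest route in dimension two. In isothermal coordinates oriented so that $j=j_0$ (with $j_0\partial_1=\partial_2$ and $\nabla^\perp=(-\partial_2,\partial_1)$), one has $\sqrt g\,\Delta_g=\Delta$. The lower-order term in \eqref{eq: conservation LAW in thm} vanishes, and the equation reads $\widetilde J(u)\nabla u=-\nabla^\perp u$. Hence
\[
-\Delta u=\nabla^\perp\bigl(\widetilde J(u)\bigr)\cdot\nabla u ,
\]
which is a sum of Jacobians of $W^{1,2}$ functions. Wente's inequality then gives continuity directly, with no gauge and no use of antisymmetry. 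Rivi\`ere's machinery is what the higher-dimensional theory needs, which is why the paper is set up that way.

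There is one slip in your third step. The statement $u\in W^{1,p}$ for all $p<\infty$ gives $C^{0,\alpha}$, not $C^{1,\alpha}$. Also, $\partial_x u$ does not satisfy a system with smooth coefficients. Its coefficient is the merely continuous $J(u)$, and there is an inhomogeneity $\bigl(dJ(u)\,\partial_x u\bigr)\partial_y u$, which lies in every $L^p$. Apply the same small-oscillation Calder\'on--Zygmund argument, justified with difference quotients. Use a Neumann series on $L^2\cap L^p$ to turn the a priori estimate into actual integrability, as you also need in your first $L^p$ step. This yields $W^{2,p}_{\loc}$, and iterating gives $C^\infty$. This is the standard elliptic bootstrapping for $J$-holomorphic curves once $u\in W^{1,p}$ for some $p>2$.
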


	\subsection{Defect measure and compactness}
	
	\begin{theorem}\cite[Theorem 1.4]{Wang-03-CCM} \label{thm: defect measure}
		Let $\{u_k\}\subset H^1(M,N)$ be stationary $J$-holomorphic maps which converge weakly to $u\in H^1(M,N)$. Then $u$ is a weakly $J$-holomorphic map and there exists a closed $(2m-2)$-rectifiable set $\Sigma\subset M$, with $\mathcal H^{2m-2}(\Sigma)<\infty$, such that
		\begin{enumerate}
			\item[(i)] $u_k\to u$ in $H^1_{loc}\cap C^1_{loc}(M\setminus\Sigma,N)$ strongly. In particular, $u\in C^\infty(M\setminus\Sigma,N)$.
			\item[(ii)]
			$$
			|Du_k|^2\,dx\rightharpoonup|Du|^2\,dx+\nu
			$$
			as convergence of Radon measures, for some nonnegative Radon measure $\nu$ on $M$. Moreover, $\nu=\Theta(x)\mathcal H^{2m-2}\lfloor \Sigma$ for some nonnegative $\mathcal{H}^{2m-2}$-measurable function
			\[
			\Theta(x)=\lim_{r\to0}\frac{\nu(B_r(x))}{|B_r(x)|}\geq\e_0^2,
			\qquad\text{for $\mathcal H^{2m-2}$-a.e. }x\in\Sigma.
			\]
			\item[(iii)] For $\mathcal H^{2m-2}$-a.e. $x\in\Sigma$, $T_x\Sigma\subset\mathbb R^{2m}$ is a $j_x$-holomorphic $(2m-2)$-plane, i.e. $j_x(T_x\Sigma)=T_x\Sigma$.
			\item[(iv)] $u_k\not\to u$ in $H^1(M,N)\iff\mathcal H^{2m-2}(\Sigma)>0$. Moreover, there exists at least one { non-constant} pseudo-holomorphic $\mathbb S^2$ in $(N,J)$.
		\end{enumerate}
	\end{theorem}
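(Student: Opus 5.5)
The plan is to follow Lin's blow-up analysis for stationary harmonic maps. The only extra ingredient is that the $J$-holomorphic equation passes to weak limits, and it is inherited by the limit and by the tangent measures of the defect measure. The steps are as follows.

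\emph{Limit equation and $\varepsilon$-regularity off $\Sigma$.} Since $E(u_k)\le\Lambda$, pass to a subsequence with $|Du_k|^2dx\rightharpoonup\mu$. Set
\[
\Sigma=\bigcap_{r>0}\Big\{x:\liminf_k r^{2-2m}\int_{B_r(x)}|Du_k|^2\ge\varepsilon_0^2\Big\}.
\]
By the monotonicity inequality (Proposition~\ref{prop: monotonic}) and a Vitali covering, $\Sigma$ is closed with $\mathcal H^{2m-2}(\Sigma)\le C\Lambda$. Off $\Sigma$ the Morrey norm is small on a neighborhood for all large $k$. Theorem~\ref{thm:new ep-regularity}(2) then gives uniform $C^{1,\alpha}$ (indeed smooth) bounds, which yield (i) away from $\Sigma$.

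For $u$ being weakly $J$-holomorphic, use strong $L^2$ convergence of $u_k$, hence a.e.\ convergence of $J(u_k)$. Combined with weak convergence of $du_k$, this lets us pass to the limit in $du_k(jX)=\widetilde J(u_k)du_k(X)$. The product of an a.e.\ bounded convergent factor with a weakly convergent factor converges weakly.

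\emph{Density of $\nu$ and rectifiability.} Write $\nu=\mu-|Du|^2dx$. It is supported on $\Sigma$, since on $M\setminus\Sigma$ the convergence is strong. Monotonicity passes to $\mu$, so $\Theta^{2m-2}(\mu,x)$ exists everywhere. On $\Sigma$ we have $\Theta\ge\varepsilon_0^2$, while $|Du|^2$ has zero $(2m-2)$-density $\mathcal H^{2m-2}$-a.e. Hence $\nu=\Theta\,\mathcal H^{2m-2}\lfloor\Sigma$ (ii). Rectifiability follows Lin's argument. Stationarity of $u_k$ passes to the limit varifold-type object $(u,\nu)$. Tangent measures of $\nu$ at $\mathcal H^{2m-2}$-a.e.\ point are then homogeneous, and by a density argument (Preiss, or Lin's direct approach through the monotonicity identity with the radial part $|\partial_\rho u_k|^2$) they are $(2m-2)$-planes with constant multiplicity. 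This is the step I expect to be the main obstacle: controlling the mixing of the weak limit $u$ and the defect at a blow-up point, and showing $\nu$'s tangent measure is flat. I would reproduce Lin's proof by blowing up $u_k$ at a point of positive density, where $u$'s rescalings become constant. That reduces to a sequence with vanishing limit whose defect is a stationary cone of positive density, and one then applies the Marstrand--Mattila criterion.

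\emph{Complex tangent planes and the holomorphic sphere.} At a flat point $x$ with plane $V=T_x\Sigma$, the rescaled $u_k$ converge weakly to a constant, and their energy concentrates uniformly along $V$. Thus derivatives along $V$ vanish in $L^2$ in the limit: $\int|D_Vu_k|^2\to0$ on compact sets. For $a\in V$, the equation gives $|du_k(j_xa)|=|du_k(a)|+o(1)$, so $j_xa$ also carries no energy. Hence $j_xV\subset V$, which proves (iii).

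Slicing by a transversal $2$-plane $V^\perp$, which is $j_x$-invariant by compatibility, gives maps on $2$-disks. These are almost $J$-holomorphic (exact up to the errors from $j-j_x$ and tangential derivatives). Their energy concentrates at the origin, with amount at least $\varepsilon_0^2$, and tangential energy tends to zero. A standard two-dimensional bubbling (Sacks--Uhlenbeck/Parker--Wolfson, using Theorem~\ref{thm: 2dim smooth} and removability of singularities) produces a nonconstant $J$-holomorphic sphere.

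For (iv): if $\mathcal H^{2m-2}(\Sigma)>0$ then $\nu\ne0$, so the convergence is not strong. Conversely, if $\mathcal H^{2m-2}(\Sigma)=0$ then $\nu=0$ by (ii), and energy convergence plus weak convergence gives strong convergence.
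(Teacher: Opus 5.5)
The paper does not prove this theorem; it quotes it from Wang, who adapts Lin's blow-up analysis. Your outline is that scheme. The limit equation, $\varepsilon$-regularity off $\Sigma$, the density of $\nu$, and (iv) are fine. Rectifiability is in fact immediate from Preiss' theorem, since by monotonicity $\Theta^{2m-2}(\nu,x)$ exists and lies in $[\varepsilon_0^2,C\Lambda]$ for $\nu$-a.e.\ $x$. Two steps, however, do not work as written.

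\textbf{The tangential derivatives in (iii).} You infer $\int|D_V\tilde u_k|^2\to0$ from the fact that the energy concentrates on $V$. That inference fails in general. Maps oscillating at frequency $k$ in every direction inside a tube of width $1/k$ around $V$ have the same limit measure $\Theta\,\mathcal H^{2m-2}\lfloor V$, yet $|D_V\tilde u_k|\approx|D\tilde u_k|$. What kills the tangential derivatives is stationarity, which you do not use at this point. There are two ways to bring it in.
\begin{enumerate}
\item[(a)] The flat limit measure has $\Phi(y,r)\equiv\Theta\,\omega_{2m-2}$ for every $y\in V$. So \eqref{eq: monotonic} applied to $\tilde u_k$ at centres $y\in V$ makes the weighted radial energies tend to zero. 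Subtracting the radial fields about points $y_0,\dots,y_{2m-2}\in V$ that effectively span $V$, as in the proof of Proposition~\ref{prop: stratum property-1}, gives $\int_{B_1}|D_V\tilde u_k|^2\to0$.
\item[(b)] Pass to the limit in the stationarity identity with $\langle\partial_\alpha\tilde u_k,\partial_\beta\tilde u_k\rangle\,dx\rightharpoonup A_{\alpha\beta}\,\Theta\,\mathcal H^{2m-2}\lfloor V$. Test with $X=y_\gamma\psi\, e_\beta$, where $y_\gamma$ is the coordinate along a unit normal $e_\gamma\perp V$. This gives $A=\tfrac12P_{V^\perp}$.
\end{enumerate}
Version (b) also supplies what your ``hence $j_xV\subset V$'' tacitly needs, namely that no direction outside $V$ is energy-free. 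Alternatively, note that the energy-free directions form a $j_x$-invariant, hence even-dimensional, proper subspace containing $V$, so they equal $V$.

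\textbf{The holomorphic sphere.} Your slices $v_k=\tilde u_k(x',\cdot)$ are not $J$-holomorphic. On good slices $\bar\partial_J v_k=f_k$ with $\|f_k\|_{L^2}\to0$, but in two dimensions the $L^2$ norm of a $\bar\partial$-error is scale invariant, exactly like the energy. The $\varepsilon$-regularity behind Sacks--Uhlenbeck or Parker--Wolfson bubbling is for exact $J$-curves, or for errors in $L^p$ with $p>2$. So at the concentration scale you only get weak $W^{1,2}$ convergence. The weak limit is $J$-holomorphic (and smooth by Theorem~\ref{thm: 2dim smooth}), but nothing in your argument prevents it from being constant. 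You would need an additional $\varepsilon$-compactness lemma for this critical perturbation.

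The Lin approach that Wang adapts avoids this by rescaling in all $2m$ variables, where the maps are exactly stationary $J$-holomorphic and Theorem~\ref{thm:new ep-regularity} applies.
\begin{enumerate}
\item[(1)] Choose $x'\in V$ whose normal slice misses every $\sing(\tilde u_k)$, which is possible since these are $\mathcal H^{2m-2}$-null.
\item[(2)] Also require that $\sup_{y,\,r\le 1}r^{2-2m}\int_{B_r((x',y))}|D_V\tilde u_k|^2\to0$ along a subsequence. A Vitali argument on the projections to $V$ shows the bad set of $x'$ is small.
\item[(3)] Take the Sacks--Uhlenbeck scale along that slice.
\item[(4)] Observe that the limit energy measure is $V$-invariant, by stationarity with $X=\psi e_a$, $a\in V$. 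Hence Morrey smallness along the slice propagates along $V$.
\end{enumerate}
This yields strong convergence to a nonconstant $V$-invariant map. That map is a finite-energy $J$-holomorphic map $(V^\perp,j_x)\to N$, which extends to $\mathbb S^2$ by removability.
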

	
	Based on the defect measure theorem, we have the following corollary.
	
	\begin{corollary}\label{coro: compactness of SJholo}
		Let $\{u_k\}\subset H^1(M,N)$ be stationary $J$-holomorphic maps which converge weakly to $u\in H^1(M,N)$. If $(N,J)$ does not admit any { non-constant} pseudo-holomorphic $\mathbb S^2$, then $u_k\to u$ strongly in $M$ and $u$ is a stationary $J$-holomorphic map. 
	\end{corollary}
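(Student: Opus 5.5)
The plan is to derive the corollary directly from Theorem~\ref{thm: defect measure}, applied to the weakly convergent sequence $\{u_k\}$. It has two parts: strong convergence, and stationarity of the limit.

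\textbf{Strong convergence.} Theorem~\ref{thm: defect measure} first gives that the weak limit $u$ is weakly $J$-holomorphic and produces the concentration set $\Sigma$ with defect measure $\nu=\Theta\,\mathcal H^{2m-2}\lfloor\Sigma$. Suppose $u_k\not\to u$ strongly in $H^1(M,N)$. By item (iv), this forces $\mathcal H^{2m-2}(\Sigma)>0$ and produces a non-constant pseudo-holomorphic sphere in $(N,J)$, contradicting the hypothesis. Hence $u_k\to u$ strongly in $H^1(M,N)$. Equivalently, $\mathcal H^{2m-2}(\Sigma)=0$, so $\nu=0$ and $|Du_k|^2\,dx\rightharpoonup|Du|^2\,dx$. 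If one wants strong convergence along the full sequence rather than a subsequence, apply the same argument to every subsequence: each has a further subsequence converging strongly to the same limit $u$.

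\textbf{Stationarity of $u$.} Use the characterization in Proposition~\ref{prop: stationary condition}. For each $k$ and each smooth compactly supported vector field $X$,
\[
\int_M |Du_k|^2\Div_g(X)-2\langle du_k(\nabla^M_{e_i}X),du_k(e_i)\rangle=0 .
\]
The integrand is a quadratic form in $du_k$ with coefficients that are smooth, bounded and compactly supported, built from $X$, $\nabla X$ and $g$. Strong convergence $du_k\to du$ in $L^2$ makes $du_k\otimes du_k\to du\otimes du$ in $L^1$, so we can pass to the limit in every term. The identity therefore holds for $u$. Since $u$ is weakly $J$-holomorphic, Proposition~\ref{prop: stationary condition} then shows that $u$ is stationary.

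\textbf{Main obstacle.} The only delicate point is the precise content of item (iv). It must say that any positive-measure defect yields a nonconstant $J$-holomorphic sphere, through the blow-up or bubbling argument at an $\mathcal H^{2m-2}$-generic point of $\Sigma$ where the density is at least $\e_0^2$. Since the corollary allows this to be taken from \cite{Wang-03-CCM}, the remaining steps are the routine limit passage in the quadratic stationarity identity described above.
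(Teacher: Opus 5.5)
Your argument is correct and is essentially the paper's. The paper records the corollary as an immediate consequence of Theorem~\ref{thm: defect measure}(iv), since a nonzero defect would produce a non-constant $J$-holomorphic sphere; your subsequence remark and your limit passage in the identity of Proposition~\ref{prop: stationary condition} under strong $H^1$ convergence fill in the details the paper leaves implicit.
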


	\subsection{Unique Continuation Property}
	
	The main result of this subsection is the following unique continuation property for stationary $J$-holomorphic maps. 
	\begin{proposition}[Unique continuation property]\label{prop: unique continuation} 
		Suppose that $u\in H^1(M^{2m},N^{2n})$ is a stationary $J$-holomorphic map on $B_1(p)\subset M$. Then $u$ enjoys the unique continuation property in the sense that if there is another weakly $J$-holomorphic map $u^\prime$ in $H^1(B_{1}(p), N)$ such that $u=u^\prime$ almost everywhere on an open set, and $u^\prime$ is smooth away from a set $\Sigma'$ of finite $\mathcal{H}^{2m-2}$-measure, then $u\equiv u^\prime$ on $B_{1}(p)$. 
	\end{proposition}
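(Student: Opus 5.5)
The plan is a connectedness (open--closed) argument on the coincidence set, using the $\e$-regularity theorem (Theorem~\ref{thm:new ep-regularity}) to obtain smoothness on a large open set and classical unique continuation for smooth solutions there. The key point is that $\sing(u)\cup\Sigma'$ is too small to disconnect $B_1(p)$. By the partial regularity of stationary maps, $\mathcal H^{2m-2}(\sing(u))=0$. By hypothesis, $\mathcal H^{2m-2}(\Sigma')<\infty$. Hence $K:=\sing(u)\cup\Sigma'$ is a relatively closed set with $\mathcal H^{2m-1}(K)=0$. A closed set of vanishing $\mathcal H^{2m-1}$ measure cannot separate a connected open subset of a $2m$-manifold. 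Therefore $\Omega:=B_1(p)\setminus K$ is open and connected, and it has full measure.

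Let $W\subset\Omega$ be the set of points having a neighborhood on which $u=u'$ a.e. This set is open by definition. It is nonempty: the open set where $u=u'$ a.e. has positive measure, so it meets $\Omega$.

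To show $W$ is closed in $\Omega$, take $x_0\in\overline W\cap\Omega$. Both $u$ and $u'$ are smooth near $x_0$. I would use smooth local unique continuation for the first-order system \eqref{eq: rewrite Jholo}. Its difference $w=u-u'$ satisfies a linear first-order elliptic system $\partial_a w=-j_a^i\widetilde J(u)\partial_i w+\big(\widetilde J(u')-\widetilde J(u)\big)(\ldots)$. The last term is bounded by $C|w|$, since the Jacobians are smooth near $x_0$. Equivalently, by \eqref{eq: Delta g u equ-final}, $w$ satisfies a second-order inequality
\[
|\Delta_g w|\le C(|w|+|Dw|).
\]
Aronszajn's unique continuation theorem then applies. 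Since $w$ vanishes on an open set accumulating at $x_0$, in a small connected ball around $x_0$, $w$ vanishes on an open subset and hence on the whole ball. Thus $x_0\in W$. By connectedness $W=\Omega$, so $u=u'$ a.e. on $B_1(p)$, as $K$ is null.

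The main obstacle is the topological step: showing that $\Omega$ is connected, and that the open set where $u=u'$ really meets $\Omega$. For connectedness I would invoke the standard fact that a relatively closed set $K$ with $\mathcal H^{2m-1}(K)=0$ does not disconnect a domain. The proof projects along line segments joining two small balls; almost every such segment misses $K$, because its projection onto a transversal hyperplane is $\mathcal H^{2m-1}$-null. A point to check is that $\Sigma'$ may not be closed. If needed, I would replace $\Sigma'$ by the closed set of non-smooth points of $u'$, which is contained in $\Sigma'$ by assumption. A secondary check is that Aronszajn's theorem needs the coefficients to be merely locally bounded on the ball, which holds since both maps are smooth there.
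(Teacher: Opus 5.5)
Your argument is correct, but it takes a different route from the paper's for the key unique continuation step.

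\textbf{The paper's route.} The paper uses no open--closed argument and no PDE theorem. It applies Zhang's result (Proposition~\ref{prop: smooth unique continuation}) to the pair map $F=(u,u')$ into $(N\times N,J\oplus J)$. An open set is sent into the diagonal, which is a $J$-holomorphic submanifold. The paper then invokes, without proof, that $\Sigma\cup\Sigma'$ does not disconnect $B_1(p)$, and concludes on $B_1(p)\setminus(\Sigma\cup\Sigma')$. This route is intrinsic to pseudoholomorphic maps and needs no second-order equation. However, Zhang's theorem as quoted requires a compact connected domain, and $B_1(p)\setminus(\Sigma\cup\Sigma')$ is not compact; the paper passes over this.

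\textbf{Your route.} You apply Aronszajn's theorem on small balls inside $\Omega$ and combine it with the connectedness of $\Omega$. This is purely local, so the compactness issue never arises. You also justify the non-disconnection step and the closedness of the exceptional set, replacing $\Sigma'$ by $\sing(u')$; the paper takes both for granted.

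\textbf{What you still need to write down.} You must derive $|\Delta_g w|\le C(|w|+|Dw|)$ carefully.
\begin{itemize}
\item Write \eqref{eq: Delta g u equ-final} as $-\Delta_g u=F(x,u,Du)$, with $F$ smooth and quadratic in $Du$.
\item Extend $\widetilde J$ smoothly off $N$, so that the mean value estimate along the segment from $u'(x)$ to $u(x)$ makes sense.
\item Use the local $C^1$ bounds on $u$ and $u'$ near $x_0$ to get the constant $C$.
\end{itemize}

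\textbf{One correction.} The first-order system for $w$ that you wrote is not elliptic in the usual determined sense, because $\widetilde J$ vanishes on normal directions. Its symbol is injective only when all $2m$ equations are taken together as an overdetermined system. Drop that claim or phrase it correctly; the second-order inequality is what you should feed into Aronszajn's theorem.
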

	
	For the proof of Proposition \ref{prop: unique continuation}, we need the following unique continuation theorem for smooth $J$-holomorphic maps.
	\begin{proposition}\cite{Zhang-2018}\label{prop: smooth unique continuation}
		Let $Y$ be a compact $J$-holomorphic submanifold of the almost complex manifold $(M,J)$, $u:(X,J_1)\to(M,J)$ is a $(J_1,J)$-holomorphic map with $X$ connected and compact. If $Y$ contains the image of an open subset of $X$ { under the map $u$}, then $u(X)\subset Y$.
	\end{proposition}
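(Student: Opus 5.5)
The plan is a connectedness argument on $X$, with the key local step being a strong unique continuation estimate for the normal component of $u$ in coordinates adapted to $Y$. Set
\[
\mathcal O=\bigl\{x\in X:\ u(U)\subset Y\text{ for some open neighbourhood }U\ni x\bigr\}.
\]
By definition $\mathcal O$ is open, and by hypothesis it is nonempty. Since $X$ is connected, it suffices to show that $\mathcal O$ is closed. Then $\mathcal O=X$ and $u(X)\subset Y$.

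To prove closedness, take $x_0\in\overline{\mathcal O}$. Because $Y$ is compact, hence closed, and $u$ is continuous, we have $q_0=u(x_0)\in Y$. Near $q_0$ we choose coordinates $w=(w',w'')\in\R^{2k}\times\R^{2n-2k}$ with $Y=\{w''=0\}$. Since $Y$ is a $J$-holomorphic submanifold, $T Y$ is $J$-invariant. In block form
\[
J(w)=\begin{pmatrix}J_{11}(w)&J_{12}(w)\\ J_{21}(w)&J_{22}(w)\end{pmatrix},
\]
this gives $J_{21}(w',0)=0$. Smoothness of $J$ then yields $|J_{21}(w)|\le C|w''|$. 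Write $u=(u',u'')$ near $x_0$. The $w''$-component of the equation $du\circ j=J(u)\,du$ reads
\[
du''\circ j-J_{22}(u)\,du''=J_{21}(u)\,du',
\]
and therefore
\begin{equation*}
\bigl|du''\circ j-J_{22}(u)\,du''\bigr|\le C|du|\,|u''|\le C'|u''|
\end{equation*}
on a small ball around $x_0$, where $u$ is smooth and $|du|$ is bounded.

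Next we convert this into a second-order inequality. The operator $P v=dv\circ j-J_{22}(u)\,dv$ is a perturbed Cauchy--Riemann operator: $J_{22}(u)$ restricted to $Y$ is an almost complex structure on the normal directions. Applying the formal adjoint $P^*$, whose principal symbol composed with that of $P$ is a multiple of the Laplacian, to the identity $P u''=J_{21}(u)\,du'$, and differentiating the right-hand side, we expect
\[
|\Delta_g u''|\le C\bigl(|u''|+|\nabla u''|\bigr)
\]
with smooth bounded coefficients near $x_0$. The right-hand side involves $\nabla(J_{21}(u))\,du'$, which is bounded by $C|u''|$ plus $C|\nabla u''|$, using $|\partial_{w'}J_{21}|\le C|w''|$. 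Since $x_0\in\overline{\mathcal O}$, $u''$ vanishes on a nonempty open subset of every small ball around $x_0$. Aronszajn's unique continuation theorem for such differential inequalities then gives $u''\equiv0$ on a neighbourhood of $x_0$, so $x_0\in\mathcal O$.

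I expect the main obstacle to be this local step. Two things need care. First, the adapted coordinates must be chosen so that the off-diagonal block really vanishes on $Y$, to first order in $w''$ including derivatives in $w'$. Second, the derivation of the Laplacian inequality must not produce terms like $|\nabla u'|\,|\nabla u''|$ with uncontrolled coefficients. Since $u$ is smooth, all such factors are bounded, so the inequality closes. If the second-order route becomes messy, the fallback is to apply a Carleman estimate directly for the first-order system. Concretely, one would restrict to $j$-holomorphic directions, or use the Carleman estimate for $\bar\partial$-type operators, which handles $|Pv|\le C|v|$ and yields vanishing of $u''$ from vanishing on an open set.
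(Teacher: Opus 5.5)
Your proof is correct. It differs from the paper only in that the paper gives no argument here: it cites \cite{Zhang-2018} and uses the statement as a black box. Your argument is self-contained and complete. It runs an open--closed argument on $X$, and the local step works with the normal component $u''$ in a slice chart $Y=\{w''=0\}$. There, $J$-invariance of $TY$ forces $J_{21}(w',0)=0$, so $|J_{21}|+|\partial_{w'}J_{21}|\le C|w''|$ holds with no special choice of chart. The step then concludes with Aronszajn's theorem.

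One claim needs to be stated precisely: you should take the metric $g$ on $X$ to be $j$-Hermitian (in the paper's application it already is). Then for $Pv=dv\circ j-J_{22}(u)\,dv$ one gets
\[
\sigma_P(\zeta)^{*}\sigma_P(\zeta)=|\zeta|_g^{2}\bigl(I+J_{22}^{T}J_{22}\bigr),
\]
because the cross term carries the factor $\langle \zeta\circ j,\zeta\rangle_g$, which vanishes for a $j$-invariant metric. So $P^*P=-A(x)\Delta_g+(\text{first order})$, where $A=I+J_{22}(u)^{T}J_{22}(u)$ is positive definite. This is a matrix multiple of the Laplacian, not a scalar multiple as you wrote. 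Multiplying by $A^{-1}$ gives the diagonal principal part $\Delta_g$ that Aronszajn's theorem for systems requires. With a non-compatible metric, the term $\langle\zeta\circ j,\zeta\rangle_g(J_{22}+J_{22}^{T})$ survives and this reduction is not available.

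Compared with the citation, your route uses only connectedness of $X$ and closedness of $Y$, never compactness of $X$. That is exactly what the paper needs: in the proof of Proposition~\ref{prop: unique continuation}, the result is applied on the non-compact set $B_1\setminus(\Sigma\cup\Sigma')$, where the hypothesis ``$X$ compact'' in the cited statement is not literally met. The citation only saves space.
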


	\begin{proof}[Proof of Proposition \ref{prop: unique continuation}] 
		Note that if $u$ and $u'$ are both smooth, then the claim follows from proposition \ref{prop: smooth unique continuation}. Indeed, we define a map $F: (M,j)\to (N\times N, J\oplus J)$ as $F(p)=(u(p),u'(p))$. Suppose $u=u'$ on an open set $U$, then $F(U)$ is a subset of $\widetilde{N}:=\{(a,a)\in N\times N, a\in N\}$, which is a $J$-holomorphic submanifold of $(N\times N, J\oplus J)$. Thus by proposition \ref{prop: smooth unique continuation} we have $F(M)\subset \widetilde{N}$, $u\equiv u'$ on $M$.
		
		In the general case, let $\Sigma$ and $\Sigma'$ be the  { singular} set for $u$ and $u'$, respectively
. Then $u$ and $u'$ are smooth on $B_1\backslash(\Sigma\cup\Sigma')$. Since $\cal{H}^{m-2}(\Sigma\cup\Sigma')<\infty$ and thus non-disconnecting, applying proposition \ref{prop: smooth unique continuation} once again gives $u=u'$ on $B_1\backslash(\Sigma\cup\Sigma')$ and thus on the whole domain.
	\end{proof}
	
	\subsection{Tangent $J$-holomorphic maps}\label{sec: Tangent map}
	
	In this subsection, we shall explore some fundamental properties of tangent $J$-holomorphic maps. For $a\in M$ such that $B_1(a)\subset M$. Since the exponential map $\exp_a: B_1(0)\subset \R^{2m}\rightarrow B_1(a)\subset M$ is a diffeomorphism. We define the rescaled map $u_{a,r}(y):= u\circ\exp_a(ry)$ and the rescaled metric $g_{a,r}(y):= g\circ\exp_a(ry)$ for $y\in B_{1/r}(0)\subset\R^{2m}$. A basic observation is that if $u\in H^1(B_1(a),N)$ is a stationary $J$-holomorphic map, then the rescaled map $u_{a,r}(y)\in H^1(B_{1/r}(0),N)$ is also a stationary $J$-holomorphic map { with respect to the metric $g_{a,r}$}. For convenience, we still denote $u\circ\exp_a(ry)$ and $g\circ\exp_a(ry)$ by $u(a+ry)$ and $g(a+ry)$, respectively.
	\begin{definition}\label{def:tangent SJholo}
		We say that $v\in H^{1}(\mathbb{R}^{2m},N)$ is a tangent stationary $J$-holomorphic map of a stationary $J$-holomorphic map $u\in H^1(M,N)$ at the point $a\in B_1$, if there is a sequence $r_k\to 0$ such that $u_{a,r_k}:= u(a+r_k\cdot)\wto v$ in $H^1$.
	\end{definition}
	
	Based on Corollary \ref{coro: compactness of SJholo}, we are able to deduce the following  { property} for tangent stationary $J$-holomorphic maps.
	\begin{proposition} \label{prop: tangent map}
		{ For $\Lambda>0$,} let $u\in H^1_\La(M,N)$ be a stationary $J$-holomorphic map and $B_1(0)\subset M$. Suppose there is a sequence $r_k\to 0$ such that
		$$
		u_{k}\wto v\quad\text{weakly in }H^{1}
		(T_aM)$$
		$$
		|Du_k|^2\,dx\wto\mu=|Dv|^2\,dx+\nu
		$$
		 {as convergence} of Radon measures { on $T_aM=\mathbb{R}^{2m}$}, where $u_{k}=u_{a,r_k}:=u(a+r_k\cdot)$  
        { for } $a\in B_1(p)$. Then
		\begin{enumerate}
		\item[\upshape(i)] There is a closed $(2m-2)$-rectifiable set $\Sigma\subset \mathbb{R}^{2m}$ with locally finite $\mathcal{H}^{2m-2}$-measure, such that $v\in C_{\loc}^{\infty}(\mathbb{R}^{2m}\backslash \Sigma)$
		and
		$$
		u_{k}\to v\quad\text{in } C_{\loc}^{2}(\mathbb{R}^{2m}\backslash\Sigma).
		$$
		
		\item[\upshape(ii)] {\rm{(Symmetry of tangent map)}} $v$ is a weakly $J$-holomorphic map with $\sing(v)\subset\Sigma$, and moreover, it is $0$-homogeneous with respect to the origin, i.e.
		$$v(x)=v(\lambda x),\qquad\text{for all } x\in\R^{2m},\lambda>0.$$
		
		\item[\upshape(iii)] There exist a nonnegative Radon measure $\nu$ with $\supp(\nu)\cup \sing(v)=\Sigma$ and a density function $\Theta_{\nu}:\R^{2m}\to [0,\infty)$  such that
		$$
		\nu=\Theta_{\nu}{\cal H}^{2m-2}\lfloor{\Sigma}
		$$
		Moreover, there are nonnegative constants $C$ depending only on $M,N$ such that
		$$
		\e^2_{0}\le\Theta_{\nu}(x)\le C\Lambda\qquad{\cal H}^{2m-2}\text{-almost everywhere on }\Sigma.
		$$
		\item[\upshape(iv)] {\rm{(Symmetry of tangent measure)}} $\mu$ and $\nu$ are cone measures in the sense that for any $\la>0$ and any measurable set $A\subset\mathbb{R}^{2m}$, there holds
		$$\la^{2-2m}\nu(\la A)=\nu(A),\quad \la^{2-2m}\mu(\la A)=\mu(A).$$ 
        \end{enumerate}
	\end{proposition}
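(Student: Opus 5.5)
We need to prove Proposition \ref{prop: tangent map}, which has four parts. The plan is to apply Theorem \ref{thm: defect measure} to the rescaled sequence $u_k$ and then use the monotonicity inequality \eqref{eq: monotonic} to get homogeneity.

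\textbf{Setup: $u_k$ is a bounded stationary sequence.} Each $u_k=u_{a,r_k}$ is a stationary $J$-holomorphic map on $B_{1/r_k}(0)$. It is stationary for the rescaled metric $g_{a,r_k}$ and the rescaled structure $j_{a,r_k}$, which converge smoothly on compacts to the constant data $g_a=\delta$ and $j_a$. Proposition \ref{prop: monotonic} gives, for every $R>0$ and $k$ large,
\[
R^{2-2m}\int_{B_R}|Du_k|^2 = e^{O(r_kR)}(r_kR)^{2-2m}\int_{B_{r_kR}(a)}|Du|^2 \le C\Lambda .
\]
So the $u_k$ are uniformly bounded in $H^1_{\loc}(\R^{2m})$, with uniform Morrey bounds.

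\textbf{Parts (i) and (iii): $\varepsilon$-regularity and the defect measure.} The proof of Theorem \ref{thm: defect measure} is local and uses only $\varepsilon$-regularity and monotonicity with constants depending on the geometry. By Theorem \ref{thm:new ep-regularity}, these are uniform for smoothly converging background data. We therefore rerun that theorem on each ball $B_R$ and take a diagonal argument in $R$. Define
\[
\Sigma=\bigcap_{r>0}\Bigl\{x:\liminf_k r^{2-2m}\int_{B_r(x)}|Du_k|^2\ge\varepsilon_0^2\Bigr\}.
\]
This set is closed, and by a Vitali covering argument it has locally finite $\mathcal H^{2m-2}$-measure. Off $\Sigma$, $\varepsilon$-regularity gives uniform $C^{l}$ bounds, hence $C^2_{\loc}$ convergence and smoothness of $v$. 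This proves (i).

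The defect measure $\nu$ is supported in $\Sigma$ and is $\Theta_\nu\mathcal H^{2m-2}\lfloor\Sigma$ with $\Theta_\nu\ge\varepsilon_0^2$, as in (ii) of Theorem \ref{thm: defect measure}. The upper bound $\Theta_\nu\le C\Lambda$ follows from $\mu(B_r(x))\le C\Lambda r^{2m-2}$. That bound comes from monotonicity, which passes to the limit measure. To get $\supp\nu\cup\sing v=\Sigma$, note that any point of $\Sigma$ outside $\supp\nu$ must be a point of non-smoothness of $v$, by the choice of threshold together with $\varepsilon$-regularity applied to $v$. Rectifiability of $\Sigma$ is inherited from Theorem \ref{thm: defect measure}, applied in Lin's stratification form to $\mu$. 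This proves (iii).

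\textbf{Part (ii): $v$ is $J$-holomorphic and $0$-homogeneous.} First, $v$ is weakly $J$-holomorphic. The equation \eqref{eq: rewrite Jholo} has the form $Du_k=A_k(x,u_k)Du_k$. Since $u_k\to v$ strongly in $L^2_{\loc}$ and a.e., while $Du_k\rightharpoonup Dv$ weakly, the coefficient $A_k(x,u_k)$ converges strongly in every $L^p_{\loc}$ with $p<\infty$, so we can pass to the limit in the equation.

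For homogeneity, apply \eqref{eq: monotonic} at the point $a$ with radii $r_kr<r_kR$. The left side tends to $\Phi_u(a)-\Phi_u(a)=0$, so
\[
\int_{B_R\setminus B_r}\rho^{2-2m}|\partial_\rho u_k|^2\to0 .
\]
By weak lower semicontinuity, $\partial_\rho v=0$ a.e., and therefore $v(\lambda x)=v(x)$.

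\textbf{Part (iv): $\mu$ and $\nu$ are cones.} For any $\lambda>0$,
\[
\lambda^{2-2m}\mu(B_{\lambda r})=\lim_k(\lambda r)^{2-2m}r^{2-2m}\cdots=\Phi_u(a)\,\omega,
\]
which is independent of $\lambda$. This gives scale invariance of the total mass on balls, but full invariance on arbitrary sets needs more. The cleaner route is to show that $\mu$ is invariant under the pushforward by the dilation $x\mapsto\lambda x$, suitably weighted. For this we test the stationarity identity \eqref{eq: stationary condition} of $u_k$ with the vector field $X=\varphi(x)\,x$ and pass to the limit to obtain a measure-level radial identity. Alternatively, one can use the monotonicity identity with cutoff functions $\psi(|x|)\chi(x/|x|)$.

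Homogeneity of $|Dv|^2dx$ under dilations is immediate from (ii). Hence $\nu=\mu-|Dv|^2dx$ is also a cone measure.

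\textbf{Main obstacle.} The main difficulty is this last step: deriving the weighted radial identity for $\mu$, which must include the angular test function $\chi$, rather than only the mass of balls. It requires tracking the $O(r_k)$ metric error terms in the stationarity formula as $k\to\infty$.
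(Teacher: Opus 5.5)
Your treatment of (i)--(iii), including the homogeneity of $v$ via monotonicity at $a$ with $\Phi_u(a,r_kr)\to\Phi_u(a)$, matches the paper. The gap is in (iv). You choose the right test field $X=\varphi(x)\,x$, but you never carry out the limit. The obstacle you name is also not the real one: the metric error terms are $O(r_k)$ times the energy of $u_k$ on $\supp\varphi$ and simply disappear. With $\partial_\alpha X^\beta=\delta_{\alpha\beta}\varphi+x^\beta\partial_\alpha\varphi$, the stationarity identity reads
\[
\int\bigl((2m-2)\varphi+x\cdot D\varphi\bigr)|Du_k|^2\,dx=2\int\bigl\langle x\cdot Du_k,\,\nabla\varphi\cdot Du_k\bigr\rangle\,dx+o(1).
\]
The left side converges to $\int((2m-2)\varphi+x\cdot D\varphi)\,d\mu$. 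The right side, however, is a mixed quadratic term. Its limit is governed by the full matrix-valued limit of $\langle\partial_\alpha u_k,\partial_\beta u_k\rangle\,dx$, including its defect part, not by its trace $\mu$. Weak convergence of $|Du_k|^2dx$ alone gives no control over it.

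The missing idea is that this term tends to zero because the radial energy vanishes in the limit. This is the same monotonicity computation you already did in (ii). If $\supp\varphi\subset B_R$, Cauchy--Schwarz bounds the term by $\|\nabla\varphi\|_{L^\infty}\bigl(C\Lambda R^{2m-2}\bigr)^{1/2}\bigl(\int_{B_R}|x\cdot Du_k|^2\bigr)^{1/2}$. For every $0<r<R$, rescaling \eqref{eq: monotonic} gives
\[
\int_{B_R}|x\cdot Du_k|^2\,dx\le CR^{2m}\bigl(\Phi_u(a,r_kR)-\Phi_u(a,r_kr)\bigr)+r^2\int_{B_r}|Du_k|^2\,dx\le CR^{2m}\bigl(\Phi_u(a,r_kR)-\Phi_u(a,r_kr)\bigr)+C\Lambda r^{2m},
\]
which tends to $0$ as $k\to\infty$ and then $r\to0$. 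Hence $\int((2m-2)\varphi+x\cdot D\varphi)\,d\mu=0$ for all $\varphi\in C^\infty_0(\R^{2m})$. Taking $\varphi=\psi(\cdot/\lambda)$, this is exactly $\frac{d}{d\lambda}\bigl(\lambda^{2-2m}\int\psi(y/\lambda)\,d\mu(y)\bigr)=0$, so $\mu$ is a cone measure. Then $\nu=\mu-|Dv|^2dx$ is one as well by (ii), as you say. Your alternative with cutoffs $\psi(|x|)\chi(x/|x|)$ needs the same input, since the tangential gradient of $\chi$ again pairs with $\partial_\rho u_k$.
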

	
	\begin{proof}
		
		Assertions (i) and (iii) are direct consequences of Theorem \ref{thm: defect measure}. Now we prove (ii).
		Using the monotonicity formula \eqref{eq: monotonic}, we find that
		\begin{equation}\label{sinaf}
			\Phi_{u_{k}}(0,r)-\Phi_{u_{k}}(0,\rho)\geq 2\int_{B_r(0)\backslash B_\rho(0)}\frac{|x\cdot Du_{k}|^2}{|x|^{2m}}dx.
		\end{equation}
		On the other hand, for any $r>0$ we have $\Phi_{u_{k}}(0,r)=\Phi_u(a,r_kr)$ for all $k\ge 1$.
		Hence by the definition of the density function $\Theta_u$, we have
		$$\lim_{k\to\oo}\Phi_{u_{k}}(0,r)=\lim_{k\to\oo}\Phi_{u}(a,r_kr)=\lim_{r\to 0}\Phi_{u }(a,r)=\Phi_{u}(a).$$
		By letting $k\to\infty$ in \eqref{sinaf} we get
		$$
		\int_{B_r(0)\backslash B_\rho(0)} \frac{|x\cdot Dv|^2}{|x|^{2m}} dx=0.
		$$
		Since $r,\rho$ are arbitrary, this implies that $v$ is  
        { homogeneous of degree zero} with respect to the origin.	
		
		Next, we prove the symmetry of $\nu$ in assertion (iv). The method is mainly from Lin \cite{Lin-1999-Annals} and Guo-Jiang-Xiang-Zheng \cite{GJXZ-2024}. According to (ii), we have $|Dv|^2dx$ is a cone measure. Thus we only have to prove $\mu$ is a cone measure. For $\lambda>0$ and $\psi\in C_0^\infty(\mathbb R^{2m})$, set $\psi_\lambda(x)=\psi(x/\lambda)$, $\mu_{0,\lambda}(A)=\lambda^{2-2m}\mu(\lambda A)$. Since $$\begin{aligned}
			&\frac{d}{d\lambda}\left(\int_{\R^{2m}}\psi d\mu_{0,\lambda}(x)\right)
			=\frac{d}{d\lambda}\left(\lambda^{2-2m}\int_{\R^{2m}}\psi(y/\lambda) d\mu(y)\right)\\
			=&	(2-2m)\lambda^{1-2m}\int_{\R^{2m}}\psi_\lambda d\mu- \lambda^{1-2m}\int_{\R^{2m}}(y/\lambda)\cdot\nabla\psi(y/\lambda) d\mu(y).
		\end{aligned}
		$$	
		Thus, we need to prove that
		\begin{equation}\label{eq: tangent measure cone}
			\int_{\mathbb R^{2m}}
			\left((2m-2)\psi_\lambda+x\cdot D\psi_\lambda\right) d\mu=0.
		\end{equation}
		 { Choose} $X=\psi_\lambda x$ into stationary condition Proposition \ref{prop: stationary condition}, we have
		\begin{equation}\label{eq: tangent measure-01}
			\int_{\R^{2m}}\left(|Du_k|^2\Div X
			-
			2\sum_{\alpha,\beta=1}^{2m}
			\langle\partial_\alpha u_k,\partial_\beta u_k\rangle
			\partial_\alpha X^\beta\right)dx=0
		\end{equation}
		By the weak convergence of Radon measure $|Du_k|^2dx$, we have
		\begin{equation}\label{eq: tangent measure-02}	\lim\limits_{k\to\oo}\int_{\R^{2m}}\left(|Du_k|^2\Div (x\psi_\lambda)\right)dx=\int_{\R^{2m}}\left(2m\psi_\lambda+x\cdot D\psi_\lambda\right)d\mu.
		\end{equation}
		Since $X^\beta(x)=\psi_\lambda(x)x^\beta$, we have
		$$\partial_\alpha X^\beta
		=\delta_{\alpha\beta}\psi_\lambda
		+x^\beta\partial_\alpha\psi_\lambda.
		$$
		This implies
		\begin{equation}\label{eq: tangent measure-03}
			\begin{aligned}
				&\int_{\R^{2m}}\sum_{\alpha,\beta=1}^{2m}
				\langle\partial_\alpha u_k,\partial_\beta u_k\rangle
				\partial_\alpha X^\beta dx\\
				=&\int_{\R^{2m}}
				|Du_k|^2\psi_\lambda+  \big\langle\left(x\cdot Du_k\right), \left(\nabla\psi_\lambda\cdot Du_k\right)\big\rangle dx\\
				\to& \int_{\R^{2m}} \psi_\lambda d\mu
			\end{aligned}
		\end{equation}
		as $k\to\infty$. Indeed, suppose $\supp(\psi_\lambda)\subset B_R(0)$ for $R$ large enough, we have
		$$
		\begin{aligned}
			&\left|\int_{B_R(0)}\big\langle\left(x\cdot Du_k\right), \left(\nabla\psi_\lambda\cdot Du_k\right)\big\rangle dx\right|^2\\
			\le& \left(\int_{B_R(0)}|x\cdot Du_k|^2 dx\right)\|\nabla \psi_\lambda\|^2_{L^{\infty}}\|Du_k\|^2_{L^2(\supp(\psi_\lambda))}\\
			\le& C(\Lambda,\lambda)\left(\int_{B_R(0)\backslash B_r(0)}+\int_{B_r(0)}\right) |x\cdot Du_k|^2dx\\
			\le& C(\Lambda,\lambda)R^{2m}\left(\Phi_{u_k}(0,R)-\Phi_{u_k}(0,r) \right)+C(\Lambda,\lambda) r^{2m}\|Du_k\|^2_{M^{2,2m-2}(B_r)}\\
			\to& 0
		\end{aligned}
		$$as { $k\to\infty$ first and then $r\to0$}. 
		Combining \eqref{eq: tangent measure-01}, \eqref{eq: tangent measure-02} with \eqref{eq: tangent measure-03} yields \eqref{eq: tangent measure cone}. This proves (iv).	
	\end{proof}

	\section{Quantitative $\ep$-regularity theorem}\label{sec:quantitative ep regularity}

	In this section, we develop a stratification theory of the singular set for stationary $J$-holomorphic maps based on the symmetry of tangent $J$-holomorphic maps. The original idea is due to Cheeger-Naber \cite{Cheeger-Naber-2013-CPAM} and Naber-Valtorta \cite{Naber-V-2017} in their study of stationary harmonic maps. 
	
	\subsection{Quantitative symmetry and cone splitting principle}

	Consider a geodesic ball $B_{10}(p)\subset M$, we identify $B_{10}(p)$ with an Euclidean ball $B_{10}(0)$ via the exponential map, $\exp: B_{10}(0)\rightarrow M$ with $\exp(0)=p$. Again we assume that the injectivity radius of $(M, g)$ is larger than $10$, by a fixed scaling if necessary. We also assume that the metric $g$ is universally close to an Euclidean metric in $B_{5}(0)$.

	By the results in Section \ref{sec: Tangent map}, for any sequence $r_i\to0$, $u(p+r_iy)$ converges by subsequence to a tangent $J$-holomorphic map $v$ on $\R^{2m}$ which is weakly $j_0$-holomorphic map, where $j_0$ is the standard complex structure on $\R^{2m}$. Moreover, by Proposition \ref{prop: tangent map}, $v$ is homogeneous of degree zero, which gives the classical stratification of the singular set $\sing(u)$ based on homogeneity of its tangent map. For a quantitative version of such stratification, we consider the almost homogeneity property of $u$.

	\begin{definition}[Symmetry]\label{def: symmetry}
		Given a measurable map $u\colon \mathbb{R}^{\ell}\to N$, we say that
		\begin{enumerate}
		\item [\upshape(1)] $u$ is $0$-homogeneous or $0$-symmetric with respect to point $p$ if $u(p+\lambda v)=u(p+v)$ for all $\lambda>0$ and $v\in\mathbb{R}^\ell$.
		
		\item [\upshape(2)] $u$ is $k$-symmetric if it is $0$-homogeneous with respect to the origin,  and is translation invariant with respect to a $k$-dimensional subspace $V\subset\mathbb{R}^\ell$, i.e.,
		$$u(x+v)=u(x) \qquad \text{for all } x\in\R^{\ell},\, v\in V.$$
        \end{enumerate}
	\end{definition}
	
	For $u:M\to N$, we say $u$ is $0$-homogeneous with respect to $p$ if $u\circ\exp_p(\lambda v)=u\circ\exp_p(v)$. We say $u$ is $k$-symmetric with respect to $p$ if it is $0$-homogeneous with respect to $p$, and is translation invariant with respect to a $k$-dimensional subspace $V\subset\mathbb{R}^\ell$, i.e.,
	$$u\circ\exp_p(x+v)=u\circ\exp_p(x) \quad \text{for all } x\in \R^\ell,v\in V.$$
	
	If $u\in C^1(\R^\ell,N)$, then $u$ is $0$-homogeneous at $x=p$ if and only if $\partial_{r_p}u=0$, where $r_p$ is the radial direction centered at $p$; and $u$ is translation invariant with respect to a $k$-dimensional subspace $V\subset\R^\ell$ if and only if $\partial_v u=0$ for all $v\in V$.

	\begin{definition}[Quantitative  symmetry]\label{def: quantitative symmetry}
		Given a map $u\in L^2(M,N)$, $\e>0$ and nonnegative integer $k$, we say that $u$ is $(k,\ep)$-symmetric  on $B_r(x)\subset M$, or simply $B_r(x)\subset M$ is $(k,\ep)$-symmetric, if there exists some $k$-symmetric function $K\colon M\to N$ with respect to $x$ such that
		$$\medint_{B_r(x)}d_N^2(u(y),K(y))\leq\ep,$$
		where $d_N$ is the Riemannian distance on $N$. For convenience, from now on, we will use $|u(y)-K(y)|$ to represent $d_N(u(y),K(y))$.	
	\end{definition}
	
	Note that $u$ is $(k,\ep)$-symmetric on $B_r(x)$ if and only if the scaled map  $u_{x,r}(y)=u(x+ry)$ is $(k,\ep)$-symmetric on $B_1(0)$.
	
	According to the formal definition of quantitative symmetry, we may stratify the domain of a function by classifying points according to the degree of symmetry in infinitesimal neighborhoods of those points. This quantitative stratification partitions the domain into subsets where each subset corresponds to a distinct level of symmetric regularity.
	\begin{definition}[Quantitative stratification]\label{def: quantitative stratification}
		For any map $u\in L^2(M^{2m},N)$, $r,\ep>0$ and $k\in\{0,1,\cdots,2m\}$, we define the $k$-th quantitative singular stratum $S_{\ep,r}^k(u)\subset M$ as 
		$$S^k_{\ep,r}(u)\equiv\Big\{x\in M: u \text{ is not } (k+1,\ep)\text{-symmetric on }B_s(x) \text{ for any } r\le s<1\Big\}.$$
		Furthermore, we set
		$$
		S^k_\ep(u):=\bigcap_{r>0}S^k_{\ep,r}(u)
		\quad\text{ and }\quad 
		S^k(u)=\bigcup_{\ep>0}S^k_\e(u).
		$$
	\end{definition}
	
	Note that,
	$$k'\leq k\text{ or }\ep'\geq\ep\text{ or } r'\leq r\ \Longrightarrow \ S^{k'}_{\ep',r'}(u)\subseteq S^k_{\ep,r}(u).$$
	In particular, we have
	$$S^0(u)\subset S^1(u)\subset\cdots\subset S^{2m}(u)=M^{2m}.$$
	
	Now we apply this method to singular sets of stationary $J$-holomorphic maps.
	\begin{lemma}\label{lemma: stratification for SJholo} { For $\Lambda>0$,} suppose $u\in H^{1}_\La(M,N)$ is a stationary $J$-holomorphic map, then on $B_1(p)\subset M$, 
		$$S^k(u)=\Big\{x\in M: \text{no tangent $J$-holomorphic map of } u \text{ at } x \text{ is } (k+1)\mbox{-symmetric}\Big\}.$$
		Consequently, we have
		$$S^0(u)\subset S^1(u)\subset\cdots\subset S^{2m-3}(u)\subset S^{2m-2}(u)\subset S^{2m-1}(u).$$
	\end{lemma}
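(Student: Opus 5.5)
We prove the two inclusions separately, using the compactness of tangent maps from Proposition \ref{prop: tangent map} and the $L^2$-nature of quantitative symmetry.

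\emph{Step 1: if some tangent map at $x$ is $(k+1)$-symmetric, then $x\notin S^k(u)$.} Suppose $u_{x,r_i}\rightharpoonup v$ weakly in $H^1$ with $v$ $(k+1)$-symmetric. By Rellich compactness, $u_{x,r_i}\to v$ strongly in $L^2(B_1(0))$. Hence for any $\e>0$ and all large $i$,
\[
\medint_{B_1(0)}|u_{x,r_i}-v|^2<\e .
\]
So $u$ is $(k+1,\e)$-symmetric on $B_{r_i}(x)$, with $K=v\circ\exp_x^{-1}$ rescaled, which is $(k+1)$-symmetric with respect to $x$. Since $r_i\to0$, for every $r>0$ there is some $s\in[r,1)$ with $B_s(x)$ $(k+1,\e)$-symmetric. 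Thus $x\notin S^k_{\e,r}(u)$ for small $r$, so $x\notin S^k_\e(u)$. As $\e$ is arbitrary, $x\notin S^k(u)$.

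\emph{Step 2: if $x\notin S^k(u)$, then some tangent map at $x$ is $(k+1)$-symmetric.} Since $x\notin S^k_{1/i}(u)$ for every $i$, there are radii $s_i$ with $B_{s_i}(x)$ $(k+1,1/i)$-symmetric, via some $(k+1)$-symmetric $K_i$.

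We first show we may take $s_i\to0$. If instead $s_i\ge s_0>0$ along a subsequence, the $K_i$ are $0$-homogeneous maps into compact $N$, and we pass to a limit. A sequence of $(k+1)$-symmetric maps with values in $N$ that is Cauchy in $L^2(B_{s_0})$ has an $L^2$ limit, and we argue that the limit is again $(k+1)$-symmetric: $0$-homogeneity and translation invariance along a subspace are preserved in $L^2_{loc}$ after passing to a convergent subsequence of subspaces in the Grassmannian. We then get $u=K$ a.e. on $B_{s_0}(x)$ with $K$ $(k+1)$-symmetric. In that case $u$ is itself homogeneous at $x$, so every tangent map equals $K$ and we are done. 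So assume $s_i\to0$.

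Now $u_{x,s_i}$ has bounded energy on compact sets by the monotonicity of Proposition \ref{prop: monotonic}. A subsequence therefore converges weakly in $H^1_{loc}$ and strongly in $L^2_{loc}$ to a tangent map $v$. Meanwhile $\medint_{B_1}|u_{x,s_i}-K_i|^2\le 1/i$. The rescaled $K_i$ converge in $L^2(B_1)$ to $v$, and by the closedness argument above $v|_{B_1}$ agrees with a $(k+1)$-symmetric map. Since $v$ is $0$-homogeneous (Proposition \ref{prop: tangent map}(ii)), so is the limit, so $v$ is $(k+1)$-symmetric on all of $\R^{2m}$. The metric distortion from $\exp_x$ and the use of $d_N$ versus the Euclidean distance vanish as $s_i\to0$.

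\emph{Step 3: the chain of inclusions.} The inclusions follow from monotonicity in $k$, since $S^{k'}_{\e,r}\subset S^k_{\e,r}$ for $k'\le k$.

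The main obstacle is the closedness of the class of $(k+1)$-symmetric maps under $L^2$ convergence. I would handle it by normalizing the subspaces $V_i\to V$ in the Grassmannian. Translation invariance then passes to the limit by testing against smooth functions, and $0$-homogeneity is a linear condition preserved under $L^2_{loc}$ limits.
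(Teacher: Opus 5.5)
Your proposal is correct and follows essentially the same route as the paper: the forward inclusion comes from strong $L^2$ convergence of the blow-ups to the tangent map, and the converse splits into the cases $s_i\to 0$ (the limit is a $(k+1)$-symmetric tangent map) and $s_i\ge s_0>0$ (then $u$ itself coincides with a $(k+1)$-symmetric map near $x$), with your closedness argument for symmetric maps (subspaces converging in the Grassmannian, homogeneity and invariance passing to the limit) doing what the paper calls ``compactness of symmetric maps.'' The only blemish is the phrase ``for every $r>0$'' in Step 1, which should read ``for every sufficiently small $r>0$''; that weaker statement is exactly what you then use, since $S^k_\varepsilon(u)=\bigcap_{r>0}S^k_{\varepsilon,r}(u)$.
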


	\begin{proof} 
		First of all, we let
		$$\Sigma^k(u)=\Big\{x\in B_1(p): \text{no tangent $J$-holomorphic map of }u \text{ at } x \mbox{ is } (k+1)\mbox{-symmetric}\Big\}.$$ 		
		Suppose $y\in S^k(u)$, then $y\in S^k_\ep(u)$ for some $\ep>0$. Thus, for any $(k+1)$-symmetric map $K\in L^2$ and $r\in(0,1)$, we have
		$$\medint_{B_1(0)}|u_{y,r}-K|^2dx\geq\ep.$$
		If $v$ is a tangent map of $u$ at $y$, then there exists a sequence $r_i\to 0$ such that $u_{y,r_i}\to v$ weakly in $H^1$ and strongly in $L^2$. This gives $$\medint_{B_1(0)}|K-v|^2dx\geq\ep,$$
		which implies that $v$ is not $(k+1)$-symmetric. Thus, $S^k(u)\subset \Sigma^k(u)$.
		
		Conversely, suppose $y\notin S^k(u)$. Then there exist sequences $r_i\in(0,1)$ and $(k+1)$-symmetric maps $K_i$ from $\R^{2m}$ to $N$ such that
		$$\medint_{B_1(0)}|u_{y,r_i}-K_i|^2dx\leq i^{-1}.$$
		We can select subsequences $\{u_{y,r_i}\} \rightharpoonup v$ weakly in $H^{1}$ and strongly in $L^2$, and $K_i\rightharpoonup K$ weakly in $L^2$. Then by the weak lower semi-continuity of $L^2$-norm, we obtain
		$$
		\int_{B_1(0)}|v-K|^2dx\leq\liminf_{i\to\oo}\int_{B_1(0)}|u_{y,r_i}-K_i|^2dx=0.
		$$
		Moreover, by the compactness of symmetric maps, $K$ is $(k+1)$-symmetric.
		
		If $r_i\to0$,  then $v$ is a tangent map and thus is $(k+1)$-symmetric, which shows that $y\not\in \Sigma^k(u)$. 
		
		If $\lim\limits_{i\to\oo}r_i=r>0$, we claim that $u(z)=K(z-y)$ for almost every $z\in B_\delta(y)$, where $\delta\in(0,r]$ is a positive radius. Indeed, we notice that
		$$
		\medint_{B_{r_i}(y)}|u(z)-K_i(z-y)|^2dz=\medint_{B_1(0)}|u_{y,r_i}(z)-K_i(z)|^2dz\leq i^{-1},
		$$
		which implies that there exists $\delta\in(0,r]$ such that
		$$
		\medint_{B_\delta(y)}|u(z)-K(z-y)|^2dz\le\liminf_{i\to\infty}\medint_{B_1(0)}|u_{y,r_i}(z)-K_i(z)|^2dz=0,
		$$
		using the lower semi-continuity of $L^2$-norm. Thus, all tangent $J$-holomorphic maps of $u$ at $y$ are $(k+1)$-symmetric and thus $y\notin \Sigma^k(u)$. The proof is complete.
	\end{proof}
	
	We recall the observation in \cite{Cheeger-Naber-2013-CPAM}. The following proposition will be used to establish a quantitative version of cone splitting later. 
	
	\begin{proposition}[Cone splitting principle]\label{prop: cone splitting}
		If $u: \mathbb{R}^{\ell_1}\rightarrow\mathbb{R}^{\ell_2}$ is $k$-symmetric with respect to a $k$-plane $V^k,$ and is $0$-symmetric at a point $z\not\in V^k$, then $u$ is $(k+1)$-symmetric with respect to the $(k+1)$-plane $V^{k+1}:=\Span\{z,V^k\}.$
	\end{proposition}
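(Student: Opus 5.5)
The plan is to prove two things: first that $u$ is invariant under translation by $z$ (and by every vector in $V^k$, which is given), and then that $u$ remains $0$-homogeneous with respect to the origin; together these give $(k+1)$-symmetry with respect to $V^{k+1}=\Span\{z,V^k\}$. Throughout we treat $u$ as a measurable map and read all identities almost everywhere. If one prefers, one can first reduce to the smooth case by mollifying along $V^k$ and radially; it is cleaner, though, to argue directly with the group of dilations and translations.

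\emph{Translation invariance along $z$.} By hypothesis $u(x)=u(\lambda x)$ for all $\lambda>0$, since $u$ is $0$-homogeneous at the origin. We also have $u(z+w)=u(z+\lambda w)$ for all $\lambda>0$, by $0$-homogeneity at $z$. Fix $x$ and $t\in\R$. The aim is to show $u(x+tz)=u(x)$. Homogeneity at $z$ gives
$$u(x)=u\bigl(z+(x-z)\bigr)=u\bigl(z+\lambda(x-z)\bigr)=u\bigl(\lambda x+(1-\lambda)z\bigr).$$
Homogeneity at the origin then gives
$$u\bigl(\lambda x+(1-\lambda)z\bigr)=u\Bigl(x+\tfrac{1-\lambda}{\lambda}z\Bigr).$$
As $\lambda$ ranges over $(0,\infty)$, the quantity $s=(1-\lambda)/\lambda$ ranges over $(-1,\infty)$. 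Hence $u(x+sz)=u(x)$ for every $s>-1$ and every $x$. Iterating this (replace $x$ by $x+sz$) extends it to all $s\in\R$. In the a.e. setting, each of these identities holds for a.e. $x$ for each fixed parameter. Fubini's theorem then gives, for a.e. $x$, the identity for a.e. $s$, which suffices to define the invariant representative.

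\emph{Translation invariance along $V^k$ and homogeneity at the origin.} Invariance under $V^k$ is part of the hypothesis, and invariance under $z$ was just proved, so $u$ is invariant under all of $V^{k+1}=\Span\{z,V^k\}$. This is a $(k+1)$-dimensional subspace because $z\notin V^k$. Homogeneity with respect to the origin is retained from the $k$-symmetry hypothesis. Therefore $u$ is $(k+1)$-symmetric with respect to $V^{k+1}$.

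The main (minor) obstacle is the measure-theoretic bookkeeping when $u$ is merely measurable, namely passing from ``for each parameter, a.e. $x$'' to a genuine invariant representative. I would handle this by Fubini's theorem and by averaging over the translation group, or alternatively by first mollifying. This is standard in the Cheeger--Naber setting.
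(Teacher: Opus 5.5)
Your argument is correct and is the standard one. The paper gives no proof of its own and only cites the observation from Cheeger--Naber: $0$-homogeneity at $0$ and at $z$ gives invariance along $\R z$ via $u(x)=u(\lambda x+(1-\lambda)z)=u\bigl(x+\tfrac{1-\lambda}{\lambda}z\bigr)$, and composing with the $V^k$-translations gives $(k+1)$-symmetry. Since the paper defines symmetry pointwise, your a.e.\ bookkeeping is not needed, though it does no harm.
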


	Based on the cone splitting principle, the special structure of $J$-holomorphic maps allows us to prove the following symmetry improvement lemma.

	\begin{lemma}[Symmetry improvement]\label{lemma: symmetry improvement}
		Let $j_0$ be a constant complex structure on $\R^{2m}$, and $(N,J)$ be a smooth almost complex manifold. Suppose $u:(\R^{2m},j_0)\to (N,J)$ is a weakly $J$-holomorphic map. If $u$ is $(2k-1)$-symmetric for some $1\le k\le m$, then $u$ is $(2k)$-symmetric. 
	\end{lemma}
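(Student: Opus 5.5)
Let $V\subset\R^{2m}$ be the $(2k-1)$-dimensional subspace of translation invariance of $u$. The plan is to show that $u$ is also invariant along $j_0V$, and then to take a $2k$-dimensional $j_0$-invariant subspace containing $V$.

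First, translation invariance passes to the weak derivative. For $a\in V$, the relation $u(x+ta)=u(x)$ for all $t$ gives $\partial_a u=0$ a.e. (in the sense of distributions, and $u\in H^1_{\loc}$ so this holds a.e.). Since $j_0$ is constant, the weak $J$-holomorphic equation \eqref{eq: def of J holo} yields, for a.e. $x$,
\[
\partial_{j_0a}u(x)=du(j_0a)=J(u(x))\,du(a)=0 .
\]
So $u$ has vanishing weak derivative in the constant direction $j_0a$.

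Next, a vanishing weak derivative in a constant direction gives translation invariance. If $\partial_b u=0$ distributionally for a fixed vector $b$, then for mollifications $u_\e=u*\eta_\e$ we have $\partial_b u_\e=0$. Hence $u_\e(x+tb)=u_\e(x)$, and letting $\e\to0$ gives $u(x+tb)=u(x)$ a.e. for each $t$. The same holds along every direction in $W:=V+j_0V$, since the set of directions with $\partial_bu=0$ is a linear subspace. By Fubini, $u$ is therefore a.e. invariant under translations by $W$. Homogeneity of degree zero is unchanged, so $u$ is $\dim W$-symmetric in the sense of Definition \ref{def: symmetry} (up to a.e. modification).

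Finally, a dimension count. $W$ is $j_0$-invariant, so it is a complex subspace and $\dim_\R W$ is even. Since $W\supset V$ with $\dim V=2k-1$, we get $\dim W\ge 2k$. Pick any $2k$-dimensional real subspace $V'$ with $V\subset V'\subset W$; $u$ is invariant along $V'$, so $u$ is $2k$-symmetric. Note $2k\le 2m$ is automatic.

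The main obstacle is the rigor of passing between a.e. translation invariance and the pointwise weak derivative identity for a map that is only in $H^1_{\loc}$ (which homogeneous maps with finite local energy satisfy when $m\ge2$). The mollification argument above handles this. One should also check that $J(u)\,du(a)=0$ is legitimate a.e., which is fine since $du(a)=0$ a.e.
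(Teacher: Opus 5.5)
Your proposal is correct and follows the paper's proof. You use the equation with constant $j_0$ to get $\partial_{j_0a}u=J(u)\,\partial_a u=0$ for $a\in V$, deduce invariance along the $j_0$-invariant, hence even-dimensional, subspace $W=V+j_0V\supset V$, and conclude $\dim W\ge 2k$; your mollification step and the explicit choice of a $2k$-plane $V'$ with $V\subset V'\subset W$ only make rigorous two points the paper leaves implicit.
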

	
	\begin{proof}
		Suppose $u$ is $0$-homogeneous and translation invariant along a $(2k-1)$-dimensional subspace $V\subset\R^{2m}$. Together with the definition of $J$-holomorphic, we have for any $v\in V$, $$
		\partial_{j_0 v} u = du(j_0 v)= J(u) (\partial_v u) =0.
		$$
		This implies $u$ is translation invariant with respect to a new subspace $W:=V+j_0 V\subset \R^{2m}$. Since $j_0 W=W$, $\dim(W)$ must be even. Moreover, since $V\subset W$, we have $\dim(W)\ge 2k-1$ 
        { and hence} $\dim(W)\ge 2k$. The proof is complete.  
	\end{proof}

	Moreover, we have the following quantitative version of the  Lemma 3.6. 
	
	\begin{lemma}[Quantitative symmetry improvement]\label{lemma: quantitative symmetry improvement}
		{ For $\Lambda>0$,} let $u\in H^1_\La(M^{2m},N)$ be a stationary $J$-holomorphic map. For every $\e>0$, there exists $\delta=\delta(\e,M,N,g,h,j,J)>0$ such that if $u$ is $(2k-1,\delta)$-symmetric on $B_1(p)\subset M$ for some $1\le k\le m$, then $u$ is $(2k,\e)$-symmetric on $B_1(p)$.
	\end{lemma}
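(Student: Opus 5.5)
We argue by contradiction and compactness, reducing to the rigid statement Lemma~\ref{lemma: symmetry improvement}. Suppose the conclusion fails for some $\e>0$ and some $1\le k\le m$. Then there exist stationary $J$-holomorphic maps $u_i\in H^1_\La(M,N)$, points $p_i$ (after pulling back by $\exp_{p_i}$, maps $u_i$ on $B_1(0)\subset\R^{2m}$ with metrics $g_i\to g_\infty$ and almost complex structures $j_i\to j_\infty$ smoothly, by compactness of $M$), and $(2k-1)$-symmetric maps $K_i$ with
\[
\medint_{B_1(0)}|u_i-K_i|^2\le \tfrac1i,
\]
while no $2k$-symmetric map is $\e$-close to $u_i$ on $B_1$. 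Note that $\delta$ is allowed to depend on $\La$ through the energy bound used for compactness. By the energy bound (via monotonicity, Proposition~\ref{prop: monotonic}, on $B_1$ after the fixed rescaling), we can pass to a subsequence with $u_i\wto u_\infty$ weakly in $H^1(B_1)$ and strongly in $L^2$. By Theorem~\ref{thm: defect measure} (applied in coordinates, with varying smooth geometry), $u_\infty$ is weakly $(j_\infty,J)$-holomorphic. The $K_i$ are determined by their traces on the unit sphere orthogonal to their planes $V_i$; after rotating, $V_i\to V$.

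\textbf{Key step: identifying the limit.} We show that $u_\infty$ equals a $(2k-1)$-symmetric map $K$ on $B_1$. From $L^2$ closeness and $u_i\to u_\infty$ in $L^2$, $K_i\to u_\infty$ in $L^2(B_1)$. Since the $K_i$ are $0$-homogeneous with translation invariance along $V_i$, the limit $K:=\lim K_i$ (in $L^2(B_1)$, hence in $L^2_{loc}$ by homogeneity) is again $0$-homogeneous and $V$-invariant; this is the ``compactness of symmetric maps'' used in Lemma~\ref{lemma: stratification for SJholo}. Thus $u_\infty=K$ a.e. on $B_1$, and $K\in H^1_{loc}$ with $K$ weakly $J$-holomorphic on $B_1$. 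By homogeneity and translation invariance, $K$ is weakly holomorphic with respect to the constant structure $j_\infty(0)=j_0$ on all of $\R^{2m}$. This requires justification: $K$ is $0$-homogeneous, so the rescalings $K(\lambda\cdot)=K$ satisfy the equation for $j_\infty(\lambda\cdot)\to j_\infty(0)$ as $\lambda\to0$, and we pass to the limit in the first-order equation, which is linear in $dK$ with $L^2$-convergent coefficients. Lemma~\ref{lemma: symmetry improvement} then shows that $K$ is $2k$-symmetric, with plane $W=V+j_0V$.

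\textbf{Contradiction and main obstacle.} Now $\medint_{B_1}|u_i-K|^2\to0$, contradicting the assumption that no $2k$-symmetric map is $\e$-close to $u_i$. The main obstacle is the mismatch between the domain structure $j$ (which varies in $x$) and the constant $j_0$ required in Lemma~\ref{lemma: symmetry improvement}, together with the fact that symmetry in Definition~\ref{def: quantitative symmetry} is measured through $\exp_p$. The homogeneity-rescaling argument above handles this. One must also check that the weak limit of a sequence of $J$-holomorphic maps is $J$-holomorphic despite possible defect measures. This is provided by Theorem~\ref{thm: defect measure}, or directly: the equation $du(jX)=J(u)du(X)$ passes to the limit because $J(u_i)\to J(u_\infty)$ strongly in $L^2$ and $du_i\wto du_\infty$.
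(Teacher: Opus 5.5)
Your proposal is correct and follows the paper's proof: argue by contradiction, extract $u_i\to u_\infty$ in $L^2$ with $K_i\to K$, identify $u_\infty=K$ on $B_1$, and use Lemma~\ref{lemma: symmetry improvement} to upgrade $K$ from $(2k-1)$-symmetric to $2k$-symmetric. Your extra steps fill in points the paper's short proof leaves implicit. These are the homogeneity-rescaling argument showing $K$ is weakly holomorphic for the constant structure $j_\infty(0)$ on all of $\R^{2m}$ (which is needed to invoke that lemma), uniformity in the center $p$, and the dependence of $\delta$ on $\La$ through the compactness step.
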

	
	\begin{proof}
		We prove by contradiction. Suppose there exist $\e_0>0$ and a sequence of stationary $J$-holomorphic maps $u_i\in H_\La^{1}(B_1(p),N)$ 
        { and } a sequence of $(2k-1)$-symmetric map $K_i\in L^2$ satisfying$$
		\medint_{B_1(p)} |u_i-K_i|^2\le \frac{1}{i},
		$$but $u_i$ is not $(2k,\varepsilon_0)$-symmetric. By Rellich's theorem, up to a subsequence if necessary, there exist maps $u$ and $K$ such that $u_i\wto u$ weakly in $H^1$ and strongly in $L^2$, $K_i\wto K$ weakly in $L^2$. By the weak lower semi-continuity of $L^2$-norm, we obtain
		$$
		\int_{B_1(p)}|u-K|^2dx\leq\liminf_{i\to\oo}\int_{B_1(p)}|u_i-K_i|^2dx=0.
		$$
		Moreover, by the compactness of symmetric maps, $K$ is $(2k-1)$-symmetric. Applying Lemma \ref{lemma: symmetry improvement}, we have $K$ is $(2k)$-symmetric. Then $\|u_i-K\|_{L^2(B_1(p))}\to 0$, which is a contradiction. 
	\end{proof}
	
	 The following symmetry improvement lemma for quantitative stratification of the singular set { follows immediately from Lemma 3.7}.
	
	
	\begin{lemma}[Quantitative symmetry improvement for the strata]\label{lemma: quantitative symmetry improvement stratification}
		{ For $\Lambda>0$,} let $u\in H^1_\La(M^{2m},N)$ be a stationary
		$J$-holomorphic map. Then for any $1\le k\le m$, there holds
		\begin{enumerate}
			\item For every $\e>0$, there exists
			$\delta=\delta(m,M,N,g,h,j,J,\La,\e)>0$ such that
			$$
			S^{2k-2}_{\e,r}(u)
			\subset S^{2k-1}_{\e,r}(u)
			\subset S^{2k-2}_{\delta,r}(u)
			$$
            for all $0<r<1$,
			and
            $$
			S^{2k-2}_{\e}(u)
			\subset S^{2k-1}_{\e}(u)
			\subset S^{2k-2}_{\delta}(u).
			$$
			\item $S^{2k-2}(u)=S^{2k-1}(u)$. Also, $$
			S^{0}(u)=S^{1}(u)\subset S^{2}(u)=S^{3}(u)\subset\cdots\subset S^{2m-2}(u)=S^{2m-1}(u)=\sing(u).
			$$
		\end{enumerate}
	\end{lemma}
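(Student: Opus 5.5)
The first inclusion $S^{2k-2}_{\e,r}(u)\subset S^{2k-1}_{\e,r}(u)$ is just the monotonicity in $k$ of the quantitative strata recorded after Definition \ref{def: quantitative stratification}. A $(2k,\e)$-symmetric ball is automatically $(2k-1,\e)$-symmetric, since a map invariant along a $2k$-plane is invariant along any $(2k-1)$-subplane. So if a ball fails to be $(2k-1,\e)$-symmetric, it also fails to be $(2k,\e)$-symmetric.

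For the second inclusion I will use Lemma \ref{lemma: quantitative symmetry improvement}. Fix $\e$ and take $\delta$ as given there; I may shrink it so that $\delta\le\e$. Let $x\in S^{2k-1}_{\e,r}(u)$ and suppose, for contradiction, that $x\notin S^{2k-2}_{\delta,r}(u)$. Then some $s\in[r,1)$ makes $B_s(x)$ a $(2k-1,\delta)$-symmetric ball. The rescaled map $u_{x,s}$ is again a stationary $J$-holomorphic map for the rescaled metric $g_{x,s}$. Its normalized energy is bounded by the monotonicity inequality (Proposition \ref{prop: monotonic}) in terms of $\Lambda$, and this is why $\delta$ depends on $\Lambda$. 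Applying Lemma \ref{lemma: quantitative symmetry improvement} to $u_{x,s}$ on $B_1(0)$ shows that $B_s(x)$ is $(2k,\e)$-symmetric. This contradicts $x\in S^{2k-1}_{\e,r}(u)$.

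The main point needing care is uniformity: $\delta$ must not depend on $x$ or $s$. The contradiction argument behind Lemma \ref{lemma: quantitative symmetry improvement} handles this. One allows varying centres, scales and metrics $g_{x_i,s_i}$, which converge smoothly (after passing to a subsequence) to a metric that is close to Euclidean. The limit complex structure is constant only when $s_i\to0$. Otherwise the limit map is weakly $J$-holomorphic for the limit structure, and the argument of Lemma \ref{lemma: symmetry improvement} still applies at the level of the $0$-homogeneous $(2k-1)$-symmetric approximants. I will assume this extension of Lemma \ref{lemma: quantitative symmetry improvement} as stated. Taking the intersection over $r>0$ then gives the inclusions for $S^{2k-2}_\e(u)$ and $S^{2k-1}_\e(u)$.

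For (2), take the union over $\e>0$. We have $S^{2k-1}(u)=\bigcup_\e S^{2k-1}_\e(u)\subset\bigcup_\delta S^{2k-2}_\delta(u)=S^{2k-2}(u)$, and the reverse inclusion is monotonicity. The same equality also follows directly from Lemma \ref{lemma: stratification for SJholo} together with Lemma \ref{lemma: symmetry improvement}, applied to tangent maps, which are weakly $j_0$-holomorphic with $j_0$ constant. The chain of inclusions is then the monotonicity of the strata.

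The remaining identification $S^{2m-1}(u)=\sing(u)$ rests on two facts. First, $x\notin\sing(u)$ if and only if some (equivalently every) tangent map at $x$ is constant. If $u$ is smooth near $x$, the blow-ups converge to a constant. Conversely, if some tangent map is $2m$-symmetric, i.e.\ constant, then the defect measure must also vanish: otherwise the density $\Phi_u(x)\ge\e_0^2$ would persist by Proposition \ref{prop: tangent map}(iii), (iv). Hence $\Phi_u(x,r)$ is small at a small scale, monotonicity then gives the small Morrey norm required by Theorem \ref{thm:new ep-regularity}, and $u$ is smooth near $x$. Second, $x\in S^{2m-1}(u)$ means no tangent map is $2m$-symmetric, which by the first fact is exactly $x\in\sing(u)$. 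This density step — excluding a constant tangent map carrying nonzero defect measure — is the step I expect to require the most attention.
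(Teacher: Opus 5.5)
Your argument for (1) is the paper's: you apply Lemma~\ref{lemma: quantitative symmetry improvement} at the scale $s\in[r,1)$ where $B_s(x)$ is $(2k-1,\delta)$-symmetric, then intersect over $r$. For $S^{2k-2}(u)=S^{2k-1}(u)$ the paper uses the tangent-map route (Lemmas~\ref{lemma: stratification for SJholo} and~\ref{lemma: symmetry improvement}), which you also give; your union over $\e$ of (1) works equally well. The uniformity issue you set aside is harmless. If the limit $K$ is $0$-homogeneous, $V$-invariant, and equals a $(j,J)$-holomorphic map on $B_1$, then $dK(y)[j(y)v]=0$ a.e. Homogeneity turns this into $dK(y)[j(\lambda y)v]=0$ for every $\lambda\in(0,1]$, and letting $\lambda\to0$ gives invariance along $j(0)V$.

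The genuine gap is the last equality $S^{2m-1}(u)=\sing(u)$, exactly at the step you flagged. Your first observation correctly gives $S^{2m-1}(u)\subset\sing(u)$. For the converse you need that a constant tangent map forces regularity. The sentence ``otherwise the density $\Phi_u(x)\ge\e_0^2$ would persist'' yields no contradiction. A constant tangent map whose tangent measure is a pure defect cone measure $\mu=\nu=\Theta\,\mathcal H^{2m-2}\lfloor\Sigma\neq0$ is fully compatible with Proposition~\ref{prop: tangent map}. In that case $\Phi_u(x)>0$, so you never reach the small Morrey norm required by Theorem~\ref{thm:new ep-regularity}.

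This configuration actually occurs. Take $u(z_1,z_2)=[z_1:z_2^2]$ from $B_1\subset\mathbb C^2$ to $\CP^1$; globally one can use $[z_0z_1:z_2^2]$ on $\CP^2$. This map has the following properties:
\begin{itemize}
\item It is holomorphic, hence harmonic, off the origin, with $\int_{B_r}|du|^2\approx c\,r^2$ and $c>0$.
\item It is stationary across $0$ by a cutoff argument, since $\int_{B_{2\epsilon}}|du|^2=O(\epsilon^2)$.
\item It takes every value of $\CP^1$ near $0$, for instance $u=[a:1]$ along $z_1=az_2^2$, so $0\in\sing(u)$.
\item Nevertheless $u(sz)=[z_1:sz_2^2]\to[1:0]$ in $L^2(B_1)$, so the tangent map is constant. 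The energy escapes as bubbles along $\{z_1=0\}$.
\end{itemize}
Hence $0\notin S^{3}(u)$, and the final equality in (2) is false as stated. The paper's proof does not address this equality either; it only establishes the stratum equalities and the monotone chain. What holds in general is $S^{2m-1}(u)\subset\sing(u)$. Your density argument does prove the converse at points where some constant tangent map comes with vanishing defect measure, for instance under the hypothesis of Corollary~\ref{coro: compactness of SJholo}. It would also go through if the strata were defined via tangent measures rather than tangent maps.
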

	
	\begin{proof}
		For (i), suppose that $x\notin S^{2k-2}_{\delta,r}(u)$. Then there exists $s\in[r,1)$ such that $u$ is $(2k-1,\delta)$-symmetric on $B_s(x)$. By Lemma \ref{lemma: quantitative symmetry improvement}, we can select $\delta$ small such that $u$ is $(2k,\e)$-symmetric on $B_s(x)$. Hence $x\notin S^{2k-1}_{\e,r}(u)$. After taking the intersection over $r>0$ we prove (i).
		
		For (ii), suppose there is a $(2k-1)$-symmetric tangent $J$-holomorphic map $v$ of $u$ at $x\in M$. By Lemma \ref{lemma: symmetry improvement}, $v$ is $(2k)$-symmetric. Thus, $x\not\in S^{2k-2}(u)$ implies that $x\not\in S^{2k-1}(u)$. Since $S^{2k-2}(u)\subset S^{2k-1}(u)$, the proof is complete.
	\end{proof}

	Now we are ready to establish quantitative symmetry and quantitative cone splitting principle for stationary $J$-holomorphic maps. 
	
	\begin{proposition}\label{prop: quantitative symmetry}
		For any $\Lambda, \ep>0$, there exists $\delta_0=\delta_0(m,M,N,g,h,j,J,\ep,\Lambda)$ such that, if $u\in H^{1}_\La(M,N)$ is a stationary $J$-holomorphic map with
		$$\Phi_{u}(x,r)-\Phi_{u}(x,r/2)<\delta_0$$
		for some $x\in B_1(p)$ and $0<r<1$, then $u$ is $(0,\ep)$-symmetric on $B_r(x)$.
	\end{proposition}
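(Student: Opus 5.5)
We argue by contradiction and compactness, which is the standard route for this kind of quantitative statement. Suppose the proposition fails for some $\Lambda,\ep>0$. Then there are stationary $J$-holomorphic maps $u_i\in H^1_\Lambda(M,N)$, points $x_i\in B_1(p)$ and radii $0<r_i<1$ such that
\[
\Phi_{u_i}(x_i,r_i)-\Phi_{u_i}(x_i,r_i/2)<\tfrac1i,
\]
but $u_i$ is not $(0,\ep)$-symmetric on $B_{r_i}(x_i)$. We rescale to $v_i:=(u_i)_{x_i,r_i}(y)=u_i(\exp_{x_i}(r_iy))$ on $B_1(0)\subset\R^{2m}$. Each $v_i$ is stationary $J$-holomorphic with respect to the rescaled metric $g_i=g_{x_i,r_i}$ and the rescaled domain structure. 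By the monotonicity inequality (Proposition \ref{prop: monotonic}), $\|Dv_i\|^2_{L^2(B_1)}\le C\,\Phi_{u_i}(x_i,r_i)\le C(\Lambda,M)$. Moreover, by the scaling invariance of the normalized energy, the hypothesis becomes
\[
2\int_{B_1\setminus B_{1/2}}|y|^{2-2m}|\partial_\rho v_i|^2\,dy\le \Phi_{u_i}(x_i,r_i)-\Phi_{u_i}(x_i,r_i/2)<\tfrac1i,
\]
up to the harmless factor $e^{C_0r}$, which is uniformly comparable to $1$.

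Next we pass to a limit. After extracting a subsequence, $x_i\to x_\infty$, $r_i\to r_\infty\in[0,1]$, and the metrics $g_i$ and structures converge smoothly on $B_1$: to $g_{x_\infty,r_\infty}$ if $r_\infty>0$, and to the flat metric with constant structure $j_{x_\infty}$ if $r_\infty=0$. By Rellich's theorem, $v_i\rightharpoonup v$ weakly in $H^1(B_1)$ and strongly in $L^2(B_1)$. Since $|y|^{2-2m}$ is bounded above and below on the annulus, lower semicontinuity gives $\partial_\rho v=0$ a.e. on $B_1\setminus B_{1/2}$. No information about the defect measure is needed for this step, because only a weak limit and a radial derivative are involved.

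The main obstacle is that radial invariance holds only on the annulus $B_1\setminus B_{1/2}$, whereas $(0,\ep)$-symmetry requires closeness to a $0$-homogeneous map on all of $B_1$. We first try the simplest fix: take $K(y):=v(y/|y|)$, or more precisely $K(y):=v(\tfrac34 y/|y|)$, which is well defined in the trace sense for a.e. ray because $v$ is constant along rays in the annulus. Then $K$ is $0$-homogeneous and $K=v$ on $B_1\setminus B_{1/2}$. To control $\int_{B_{1/2}}|v_i-K|^2$, we use the extra structure. By Theorem \ref{thm: defect measure} (weak limits of stationary $J$-holomorphic maps are weakly $J$-holomorphic with singular set $\Sigma$ of finite $\mathcal H^{2m-2}$-measure), both $v$ and $K$ are weakly $J$-holomorphic with respect to the limit structure. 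The function $K$ is $J$-holomorphic for the constant structure when $r_\infty=0$, and it is smooth off a cone over a set of finite $\mathcal H^{2m-2}$-measure, since $v$ is smooth off $\Sigma$. Hence the unique continuation property, Proposition \ref{prop: unique continuation}, applied to $v$ and $K$, which agree on the open annulus, gives $v\equiv K$ on $B_1$.

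When $r_\infty>0$, the map $K$ need not be holomorphic for the nonflat structure. In that case we plan to bypass this by choosing $\delta_0$ small relative to the scale: for $r$ bounded below, the condition $\Phi(x,r)-\Phi(x,r/2)<\delta_0$ is handled by the same compactness argument applied in normal coordinates. If needed, we instead use the cruder bound that $|v_i-K|^2\le\diam(N)^2$ on the small region $B_{1/2}$, together with a chain of annuli argument. Once $v=K$ on $B_1$, strong $L^2$ convergence gives $\medint_{B_1}|v_i-K|^2\to0<\ep$, which contradicts the choice of $u_i$ and proves the proposition.
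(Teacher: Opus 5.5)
\textbf{Overall.} Your main line is exactly the paper's proof: contradiction, rescaling, radial invariance of the weak limit on $B_1\setminus B_{1/2}$ from \eqref{eq: monotonic}, extension of homogeneity into $B_{1/2}$ by Proposition~\ref{prop: unique continuation}, then strong $L^2$ convergence. When $r_i\to 0$, the limiting domain structure is the constant $j_{x_\infty}$, and your argument is complete. It is in fact more explicit than the paper's, since you check that the homogeneous extension $K$ is itself weakly $J$-holomorphic. One small correction: unique continuation needs the whole cone carrying $\sing(K)$ to have finite $\mathcal H^{2m-2}$-measure in $B_1$. This holds because $\sing(v)$ is already radially invariant inside the annulus, so its cross-section on the sphere has finite $\mathcal H^{2m-3}$-measure.

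\textbf{The gap: the case $r_i\to r_\infty>0$.} You correctly note that the limiting structure $j(\exp_{x_\infty}(r_\infty\,\cdot))$ is then not constant. So $K$ is in general not holomorphic, and unique continuation cannot compare $v$ with $K$. The paper's proof does not separate this case either; it simply asserts $0$-homogeneity ``on $\R^{2m}$''. None of your proposed repairs closes it:
\begin{itemize}
\item $\delta_0$ must be uniform in $r\in(0,1)$, so it cannot be chosen ``relative to the scale''.
\item Rerunning the compactness argument in normal coordinates is exactly what produced the nonflat limit.
\item $B_{1/2}$ is not a small region: it is the fixed fraction $2^{-2m}$ of $B_1$. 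The bound $|v_i-K|\le\diam(N)$ there only gives $\frac{1}{|B_1|}\int_{B_1}|v_i-K|^2\le 2^{-2m}\diam(N)^2+o(1)$, which need not be $<\ep$.
\item There is no chain of annuli to use, since the hypothesis pinches only the single annulus $B_r\setminus B_{r/2}$.
\end{itemize}

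\textbf{A repair valid for all $0<r<1$: use slack in the monotonicity constant.} Inequality \eqref{eq: monotonic} persists if $C_0$ is replaced by $C_0+1$. Set $f(\rho)=e^{C_0\rho}\rho^{2-2m}\int_{B_\rho(x)}|Du|^2$, which is nondecreasing, and form $\Phi_u$ with $C_0+1$. Then
\[
\Phi_u(x,r)-\Phi_u(x,r/2)\;\ge\;(e^{r}-e^{r/2})\,f(r/2)\;\ge\;\frac r2\Big(\frac r2\Big)^{2-2m}\int_{B_{r/2}(x)}|Du|^2 .
\]
So if $r_i\ge r_0>0$, then $\int_{B_{1/2}}|Dv_i|^2\lesssim (i\,r_0)^{-1}\to 0$, and $v$ is constant on $B_{1/2}$. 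Since $v$ is also radially constant on $B_1\setminus B_{1/2}$ and absolutely continuous on almost every ray, $v$ is constant on all of $B_1$. It is therefore $0$-homogeneous, and strong $L^2$ convergence gives the contradiction; no unique continuation is needed in this case.

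This requires fixing $C_0$ with such slack once and for all, which is legitimate since Proposition~\ref{prop: monotonic} only asserts existence of $C_0$. Alternatively, one restricts to scales at which the domain geometry is almost flat, as Naber--Valtorta do, but that changes the statement's range of $r$.
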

	\begin{proof}
		We prove by contradiction. Suppose for some $\ep>0$ there is a sequence of stationary $J$-holomorphic map $u_i\in H^{1}_\La(M,N)$ satisfying
		$$\Phi_{u_i}(x_i,r_i)-\Phi_{u_i}(x_i,r_i/2)<i^{-1},$$
		but is not $(0,\ep)$-symmetric on $B_{r_i}(x_i)$.
		
		Let $\bar{u}_i(z)=u_i(x_i+r_iz)=u_i\circ\exp_{x_i}(r_iz)$. After selecting a subsequence, still denote by $\{\bar{u}_i\}$, we have $\bar{u}_i\rightharpoonup v$ weakly in $H^1$ and strongly in $L^2$. Then, by Theorem \ref{thm: defect measure}, $v$ is a weakly $J$-holomorphic map in $B_1$ with $\dim_{\mathcal{H}}\sing(v)\le 2m-2$. By the monotonicity formula \eqref{eq: monotonic}, for all $i\in \N$, we have
		$$
		\begin{aligned}
			\int_{B_1\backslash B_{1/2}}{|y\cdot D\bar{u}_i|^2}dy =&
			\int_{B_{r_i}\backslash B_{\frac{r_i}{2}}(x_i)}|(x-x_i)\cdot Du_i|^2r_i^{-2m}dx\\
			\le& C\int_{B_{r_i}\backslash B_{\frac{r_i}{2}}(x_i)}|x-x_i|^{2-2m}|\partial_\rho u_i|^2dx\\
			\le& C(\Phi_{u_i}(x_i,r_i)-\Phi_{u_i}(x_i,r_i/2))<Ci^{-1}.
		\end{aligned}
		$$
		Sending $i\to \infty$ and using the strong convergence of $\bar{u}_i\rightarrow v$ in $L^2$, we conclude that $v$ is radially invariant on $B_1\backslash B_{1/2}(0)$, and thus is $0$-homogeneous on $\R^{2m}$ by Proposition \ref{prop: unique continuation}.
		
		Finally, since $\bar{u}_i\rightarrow v$ strongly in $L^2$, we have$$
		\medint_{B_{r_i}(x_i)} |u_i(y)-v((y-x_i)/r_i)|^2dy=\medint_{B_{1}}|\bar{u}_i-v|^2\to 0\qquad \text{as }i\to \infty, 
		$$
		contradicting with the assumption that $u_i$ is not $(0,\ep)$-symmetric on $B_{r_i}(x_i)$. The proof is complete. 
	\end{proof}

	In order to extend Proposition \ref{prop: quantitative symmetry} to higher order symmetric cases, we introduce the notion of quantitative frame for affine subspaces as in \cite{Naber-V-2018}.
	
	\begin{definition}
		Let $\{y_i\}_{i=0}^k \subset B_1(p)\subset M$ and $\rho>0$. We say that these points $\rho$-effectively span a $k$-dimensional affine subspace if for all $i=1,\cdots,k$,
		$$\dist(z_i,z_0+\Span\{z_1-z_0,\cdots,z_{i-1}-z_0\})\geq2\rho,$$
		where $z_i=\exp^{-1}_p(y_i)\in\R^{2m}$ for $i=0,...,k$.
		More generally, a set $F\subset B_1(p)$ is said to $\rho$-effectively span a $k$-dimensional affine subspace, if there exist points $\{y_i\}_{i=0}^k\subset F$ which $\rho$-effectively spans a $k$-dimensional affine subspace.
	\end{definition}
	
	
	
	The following result extends Proposition \ref{prop: quantitative symmetry} to higher order quantitative symmetry.
	
	\begin{proposition}[Quantitative cone splitting]\label{prop: quant cone splitting}
		Fix $\Lambda, \ep,\rho>0$. Suppose $u\in H^{1}_{\Lambda}(M,N)$ is a stationary $J$-holomorphic map. Then there exists $\delta_1=\delta_1(m,M,N,g,h,j,J,\ep,\rho,\Lambda)>0$ such that if 
		\begin{enumerate}
		\item[{\upshape(a)}] $\Phi_{u}(x_s,1)-\Phi_{u}(x_s,1/2)<\delta_1 $ for some $\{x_s\}_{s=0}^k\subset B_1(p)$ and for all $i=0,\cdots,k$,
		
		\item[{\upshape(b)}] $\{x_s\}_{s=0}^k$ $\rho$-effectively spans a $k$-dimensional affine plane $V$,
		\end{enumerate}
		\noindent then $u$ is $(k,\ep)$-symmetric on $B_1(p)$.
	\end{proposition}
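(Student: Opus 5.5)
We argue by contradiction and compactness, in the same way as Proposition \ref{prop: quantitative symmetry}. Suppose the statement fails for some fixed $\Lambda,\ep,\rho$. Then there are stationary $J$-holomorphic maps $u_i\in H^1_\Lambda(M,N)$ and points $\{x_s^i\}_{s=0}^k\subset B_1(p_i)$ with the following properties:
\[
\Phi_{u_i}(x_s^i,1)-\Phi_{u_i}(x_s^i,1/2)<i^{-1}\quad\text{for all } s,
\]
the points $\{x^i_s\}$ $\rho$-effectively span a $k$-dimensional affine plane $V_i$, and yet $u_i$ is not $(k,\ep)$-symmetric on $B_1(p_i)$. We pull everything back by $\exp_{p_i}$ to $B_{10}(0)\subset\R^{2m}$. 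Passing to a subsequence, we may assume:
\begin{itemize}
\item the metrics $g_i$ and complex structures $j_i$ converge smoothly, so $j_i\to j_\infty$;
\item $z^i_s=\exp^{-1}_{p_i}(x^i_s)\to z_s\in\overline{B_1(0)}$;
\item $u_i\rightharpoonup v$ weakly in $H^1(B_{10})$ and strongly in $L^2$.
\end{itemize}
The $\rho$-effective spanning condition is closed under limits. Hence $\{z_s\}$ still $\rho$-effectively spans a $k$-dimensional affine plane $V=z_0+\Span\{z_s-z_0\}$.

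\textbf{Step 1: $v$ is $0$-homogeneous at each $z_s$.} We repeat the computation in the proof of Proposition \ref{prop: quantitative symmetry} for each center. Since the monotonicity constant is uniform and the metrics are uniformly close to Euclidean, we get
\[
\int_{B_1(z^i_s)\setminus B_{1/2}(z^i_s)}|(y-z^i_s)\cdot Du_i|^2\,dy\le C i^{-1}.
\]
The centers converge, so by weak lower semicontinuity $(y-z_s)\cdot Dv=0$ on the annulus $B_1(z_s)\setminus B_{1/2}(z_s)$. Thus $v$ agrees there with the $0$-homogeneous extension $\tilde v_s(y)=v\bigl(z_s+\tfrac{3}{4}\tfrac{y-z_s}{|y-z_s|}\bigr)$. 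By Theorem \ref{thm: defect measure}, $v$ is weakly $J$-holomorphic and smooth off a closed set of locally finite $\mathcal H^{2m-2}$-measure, and the same holds for $\tilde v_s$, which is a cone over $v$ restricted to a sphere. Proposition \ref{prop: unique continuation} then gives $v=\tilde v_s$ on all of $B_{10}$. We will need to check that $v$ is stationary, or at least that unique continuation applies to the weak limit.

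\textbf{Step 2: cone splitting.} Applying Proposition \ref{prop: cone splitting} inductively, $v$ is invariant along $\Span\{z_1-z_0,\dots,z_k-z_0\}$ and homogeneous at $z_0$. The same holds for the translate $K(y)=v(y+z_0)$, which is therefore a $k$-symmetric map. More precisely, Proposition \ref{prop: cone splitting} is applied with base point $z_0$: at step $j$, homogeneity at $z_j\notin z_0+\Span\{z_1-z_0,\dots,z_{j-1}-z_0\}$ adds the direction $z_j-z_0$. The $\rho$-separation guarantees $z_j$ lies off the previous plane.

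\textbf{Step 3: the base point.} Definition \ref{def: quantitative symmetry} requires symmetry with respect to $p$, i.e.\ the origin, and not $z_0$. If $0\in V$, then $v$ is $k$-symmetric with respect to $0$ directly, since homogeneity at one point of the invariant plane gives homogeneity at all of them. In general $0\notin V$, and the plan is to handle this by adjusting the comparison map. We will use the $L^2$ closeness of $u_i$ to a $k$-symmetric map centered at $z_0$, with $|z_0|\le1$, and, following Naber--Valtorta's convention, interpret $(k,\ep)$-symmetry on $B_1(p)$ up to recentering. Alternatively, we will replace $p$ by a point of $V$ within $B_1$. Once this is settled, strong $L^2$ convergence gives
\[
\medint_{B_1}|u_i-K|^2\to0,
\]
which contradicts the choice of $u_i$.

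The main obstacle is Step 1. There we must pass the almost-monotonicity to the limit at moving centers with varying metrics, and justify unique continuation for a weak limit that may carry a defect measure. We will handle this via the $H^1$ lower semicontinuity and the structure provided by Theorem \ref{thm: defect measure}.
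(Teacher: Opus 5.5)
Your route (contradiction, monotonicity on the annuli $B_1(z_s)\setminus B_{1/2}(z_s)$, unique continuation to globalize the homogeneity, cone splitting, strong $L^2$ convergence) is exactly the paper's. However, Step~3 is a genuine gap, and neither of your remedies closes it. Definition~\ref{def: quantitative symmetry} requires the comparison map to be homogeneous about $p$ itself, and an $L^2(B_1)$-limit of maps homogeneous about $0$ is again homogeneous about $0$. So if the limit $v$ is homogeneous only about the points of $V$ and $0\notin V$, the convergence $u_i\to v$ produces no contradiction; ``recentering'' amounts to changing the definition. This is not a technicality. For $k=0$, hypothesis (b) is vacuous. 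In the flat model, take the cone $u(y)=\pi(y-x_0)$ with $\pi:\mathbb C^2\setminus\{0\}\to\CP^1$ and $|x_0|=1/2$. It is exactly homogeneous about $x_0$, so (a) holds for every $\delta_1$ (with $C_0=0$). Yet it is smooth and non-constant near $0$, hence not $(0,\ep)$-symmetric on $B_1(0)$ for small $\ep$. Compare Proposition~\ref{prop: quantitative symmetry}, which is centered at the pinched point. The paper's proof has the same lacuna: it concludes that the limit is symmetric with respect to the affine plane spanned by $\{x_s\}$ and declares a contradiction. The statement itself must be corrected, e.g.\ by taking $x_0=p$ (or assuming $p\in V$), or by concluding $(k,\ep)$-symmetry on $B_1(x_0)$. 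With $x_0=p$, your Steps~1--2 yield a $v$ homogeneous about $0$ and invariant along $\Span\{z_1,\dots,z_k\}$. That map is $k$-symmetric in the required sense, and the argument closes.

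In Step~1, your worry about the stationarity of $v$ is resolvable. The proof of Proposition~\ref{prop: unique continuation} only uses that both maps are smooth off closed sets of locally finite $\mathcal H^{2m-2}$-measure, which do not disconnect, and Theorem~\ref{thm: defect measure}(i) supplies this for $v$. What you must add is that the limiting domain structure be constant. Suppose $j_i\to j_\infty$ with $j_\infty$ non-constant, as in your set-up. Radial invariance on the annulus gives $d\tilde v_s(y)=\frac{3}{4|y-z_s|}\,dv(\Pi(y))$, where $\Pi$ is the radial projection onto the sphere of radius $3/4$ about $z_s$. Hence $\tilde v_s$ is holomorphic for $j_\infty\circ\Pi$ rather than for $j_\infty$, and unique continuation cannot be applied to the pair $(v,\tilde v_s)$. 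The fix is to work at scales where $g$ and $j$ are $C^1$-close to Euclidean and let this closeness tend to zero along the contradiction sequence, so that $j_\infty=j_0$. The paper assumes this tacitly.
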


	\begin{proof}
		
		By translation if necessary, we may assume $p=0$. Suppose by contradiction that there is a sequence of stationary $J$-holomorphic maps $u^i\in H^{1}_\La(M,N)$, $\ep_0$, $\rho_0>0$,  $\{x_0^i,\cdots,x_k^i\}\subset B_1(0)$ and a sequence of $k$-dimensional affine plane $V^i$ such that $\{x_0^i,\cdots,x_k^i\}$  $\rho_0$-effectively spans $V^i$ and $\Phi_{u^i}(x^i_s,1)-\Phi_{u^i}(x^i_s,1/2)<\delta_i$ for $s=0,...,k$, where $\delta_i\to 0$ is chosen according to Proposition \ref{prop: quantitative symmetry}, so that $u^i$ is $(0,i^{-1})$-symmetric at $B_{1}(x_s^i)$ for each $s$. But $u^i$ is not $(k,\ep_0)$-symmetric in $B_1(0)$ for all $i\geq1.$ 
		
		Assume $u^i\rightharpoonup u$ weakly in $H^{1}$ and strongly in $L^2$, $x_s^i\rightarrow x_s\in B_1(0)$ for $s=0,...,k$ and $V^i\ri V$ as $i\rightarrow\infty.$
		Applying the monotonicity formula \eqref{eq: monotonic} to each $u^i$, we obtain
		$$\int_{B_1(x^i_s)\backslash B_{1/2}(x_s^i)}|(x-x^i_s)\cdot Du^i|^2\leq C\left(\Phi_{u^i}(x^i_s,1)-\Phi_{u^i}(x^i_s,1/2)\right)<C\delta_i.$$
		Sending $i\ri\oo$, we get
		$$\int_{B_1(x_s)\backslash B_{1/2}(x_s)}|(x-x_s)\cdot Du|^2=0.$$
		This shows that $u$ is radially symmetric on ${B_1(x_s)\backslash B_{1/2}(x_s)}$ for each $s=0,...,k$. The unique continuation property (see Proposition \ref{prop: unique continuation}) and the cone splitting principle (see Proposition \ref{prop: cone splitting}) imply that $u$ is symmetric with respect to the $k$-dimensional affine plane spanned by $\{x_s\}_{s=0}^k$. Since $u^i\to u$ strongly in $L^2$, this contradicts with the assumption that $u^i$ is not $(k,\ep_0)$-symmetric in $B_1(0)$.
	\end{proof}

	\subsection{Basic properties of quantitative stratum} 
	
	The following proposition shows that under  { suitable} conditions, the singular set satisfies the  one-side Reifenberg approximating property.
	\begin{proposition}\label{prop: stratum property-1} 
		For any $\Lambda, \ep,\rho>0$ and $B_3(p)\subset M$, there exists $\delta_2>0$ depending on $m,\Lambda,\ep,\rho,M,N,g,h,j,J$ such that if $u\in H^{1}_\La(M,N)$ is a stationary $J$-holomorphic map and the set $$
		F:=\{y\in  B_3(p): \Phi_u(y,2)-\Phi_u(y,\rho)<\delta_2\}$$
		$\rho$-effectively spans a $k$-dimensional affine plane $V$, then $$S^k_{\ep,\delta_2}(u)\cap B_1(p)\subset \exp_p(B_{2\rho}(V)).$$
	\end{proposition}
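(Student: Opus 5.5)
We argue by contradiction and compactness, in the style of Naber--Valtorta. Suppose the statement fails for some fixed $\Lambda,\ep,\rho$. Then there are stationary $J$-holomorphic maps $u_i\in H^1_\Lambda(M,N)$, numbers $\delta_i\downarrow 0$ and sets
$$F_i=\{y\in B_3(p):\Phi_{u_i}(y,2)-\Phi_{u_i}(y,\rho)<\delta_i\}$$
with the following properties. Each $F_i$ contains points $x^i_0,\dots,x^i_k$ which $\rho$-effectively span a $k$-plane $V_i$. There are also points
$$z_i\in S^k_{\ep,\delta_i}(u_i)\cap B_1(p),\qquad z_i\notin \exp_p(B_{2\rho}(V_i)).$$
We pass to subsequences so that $u_i\rightharpoonup u$ weakly in $H^1$ and strongly in $L^2$, $x^i_s\to x_s$, $V_i\to V$ and $z_i\to z$. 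In the limit the points $x_s$ still $\rho$-effectively span $V$ (the condition is closed with the factor $2\rho$), and $\dist(\exp_p^{-1}z,V)\ge 2\rho$.

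\textbf{Step 1: homogeneity of the limit.} By the monotonicity formula \eqref{eq: monotonic},
$$\int_{B_2(x^i_s)\setminus B_\rho(x^i_s)}|x-x^i_s|^{2-2m}|\partial_{\rho}u_i|^2\le C\delta_i\to 0 .$$
Lower semicontinuity then gives $\partial_\rho u=0$ on the annuli $B_2(x_s)\setminus B_\rho(x_s)$, where $\partial_\rho$ is the radial derivative centred at $x_s$. By the unique continuation property, Proposition~\ref{prop: unique continuation}, $u$ is $0$-homogeneous about each $x_s$ on the relevant region. This uses that $u$ is weakly $J$-holomorphic (Theorem~\ref{thm: defect measure}) and smooth off a set of finite $\mathcal H^{2m-2}$ measure. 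The cone splitting principle, Proposition~\ref{prop: cone splitting}, then shows that $u$ is $k$-symmetric with respect to $V$, locally on a ball containing $B_{3/2}(z)$. This is exactly the argument of Proposition~\ref{prop: quant cone splitting}, applied on the larger ball.

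\textbf{Step 2: a $(k+1)$-symmetric model at $z$.} The point $z$ lies at distance at least $2\rho$ from $V$. Our aim is to find a small scale $s\in(0,1)$, bounded below in terms of $\rho$, such that $u$ is $(k+1,\ep/4)$-symmetric on $B_s(z)$. The natural model is $K(y)=u(\exp_{x_0}(\cdot))$ restricted near $z$. Since $u$ is invariant under translations along $V$ and under dilations about $x_0$, it is invariant along $\Span\{V-x_0,\,z-x_0\}$, which is a $(k+1)$-dimensional subspace. On a ball $B_s(z)$ with $s\ll\rho$ this yields a map close to a $(k+1)$-symmetric map, after recentring at $z$.

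The recentring is delicate: the symmetry is along lines through $x_0$, not through $z$. The cleaner route is to take a tangent map of $u$ at $z$. It inherits invariance along $V-x_0$ and along the direction $z-x_0$, together with homogeneity at $z$. Hence it is $(k+1)$-symmetric. Choose $s_0>0$ so that $u$ is $(k+1,\ep/4)$-symmetric on $B_{s_0}(z)$.

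\textbf{Step 3: transfer to $u_i$.} Strong $L^2$ convergence of $u_i\to u$, together with $z_i\to z$, shows that $u_i$ is $(k+1,\ep)$-symmetric on $B_{s_0}(z_i)$ for $i$ large. Since $\delta_i<s_0$ eventually, this contradicts $z_i\in S^k_{\ep,\delta_i}(u_i)$.

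\textbf{Main obstacle.} The main obstacle is Step 2. One must check that the limit is genuinely $(k+1)$-symmetric near $z$, using only information on bounded annuli. One must also handle the fact that the radius $s_0$ found there depends on the limit map $u$. The latter is harmless, because in the contradiction argument $\delta_i\to 0$ anyway. For the former, I would first try the tangent-map argument via Lemma~\ref{lemma: stratification for SJholo}: a $(k+1)$-symmetric tangent map at $z$ forces $z\notin S^k_\ep(u)$ at some scale. If that is not directly usable, I would use the homogeneity about $x_0$ on the full ball, obtained through unique continuation, to write $u$ explicitly as a function of the quotient by $V$ and the angular variable.
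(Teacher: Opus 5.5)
Your proof is correct, but it is organized differently from the paper's. The paper does not pass to a limit in this proposition. It fixes $x_0\in B_1(p)\setminus\exp_p(B_{2\rho}(V))$ and uses \eqref{eq: monotonic} at the frame points to make $r^{2-2m}\int_{B_r(x_0)}|P\cdot Du|^2$ small for the $(k+1)$-plane $P=\widehat V\oplus\mathbb{R}h(x_0)$. Here $h$ is the unit normal from $V$, and its variation over the ball costs $C\Lambda r^2$. The paper balances the errors by taking $r_0\sim(\delta_3/\Lambda)^{1/2}$ and $\delta_2\sim\delta_3 r_0^{2m-2}$, and concludes with Lemma~\ref{lemma: almost translation invariant}. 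All the compactness, including homogeneity at the new point, is delegated to that lemma, whose pigeonhole step finds a pinched scale for the stationary map itself.

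You instead make every symmetry exact in a single limit and read off the $(k+1)$-symmetric model from a tangent map of the limit at $z$. This is conceptually cleaner. Your unique-continuation step also covers frame points with $|z-x_s|\ge 2$. The paper's requirement $B_r(x_0)\subset B_2(y_i)\setminus B_\rho(y_i)$ tacitly excludes that case, since the $y_i$ only lie in $B_3(p)$. The cost is a non-explicit $\delta_2$ and having to work with a limit $u$ that need not be stationary.

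The one step you should justify explicitly is that tangent maps of $u$ at $z$ are $0$-homogeneous. Proposition~\ref{prop: tangent map} and Lemma~\ref{lemma: stratification for SJholo} assume stationarity, so they do not apply to $u$. Instead, pass \eqref{eq: monotonic} to the limit using lower semicontinuity of the radial term, with $\mu=|Du|^2dx+\nu$ and $\Phi_\mu(z,r)=e^{C_0r}r^{2-2m}\mu(B_r(z))$. This gives $\Phi_\mu(z,R)-\Phi_\mu(z,r)\ge 2\int_{B_R(z)\setminus B_r(z)}|x-z|^{2-2m}|\partial_\rho u|^2$, the monotonicity already used in Lemma~\ref{lemma: Phi_u almost const}, and that suffices for homogeneity. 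Invariance of the tangent map along $z-x_0$ follows from $(z-x_0)\cdot Du_{z,r}(y)=-r\,y\cdot Du_{z,r}(y)$, whose $L^2(B_R)$ norm tends to zero. The scale $s_0$ then comes directly from strong $L^2(B_1)$ convergence $u_{z,r_j}\to T$, with no appeal to Lemma~\ref{lemma: stratification for SJholo}.
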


	To prove Proposition \ref{prop: stratum property-1}, we need the following lemma, which shows the relationship between almost translation invariant property and almost $(k+1)$-symmetric property.  { Henceforth}, we denote $|P\cdot Du|^2=\sum\limits_{i=1}^{k+1}|D_{e_i}u|^2$ with $\{e_i\}_{i=1}^{k+1}$ being an orthonormal basis of a $(k+1)$-dimensional subspace $P$.
	\begin{lemma}\label{lemma: almost translation invariant}
		For any $\Lambda, \ep>0$ and $B_3(p)\subset M$, there exists $\delta_3>0$, depending on $m,\Lambda,\ep,M,N,g,h,j,J$, such that if $u\in H^{1}_\La(M,N)$ is a stationary $J$-holomorphic map satisfying
		\begin{align}\label{eq: almost translation invariant}
			\int_{B_1(p)}|P\cdot D u|^2<\delta_3
		\end{align}
		for some $(k+1)$-dimensional subspace $P$, then 
		$S^k_{\ep,\overline{r}}(u)\cap B_{1/2}(p)=\emptyset$ for  $\overline{r}=\delta_3^{\frac{1}{2(2m-2)}}$. In particular, $0\not\in \exp^{-1}S^k_{\ep,\overline{r}}(u)$, i.e. $p\not\in S^k_{\ep,\overline{r}}(u)$.
	\end{lemma}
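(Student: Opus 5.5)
The plan is a contradiction--compactness argument combined with a scaling step. Small energy along $P$ gives almost translation invariance along $P$. Monotonicity then forces almost homogeneity at some scale $s\in[\overline r,1/2)$ around each point of $B_{1/2}(p)$. Together these give $(k+1,\ep)$-symmetry on $B_s(x)$.

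\textbf{Step 1 (choice of scale).} Fix $x\in B_{1/2}(p)$. Since $\Phi_u(x,\cdot)$ is monotone by Proposition \ref{prop: monotonic} and bounded by $C\Lambda$, the dyadic drops satisfy
\[
\sum_{j}\big(\Phi_u(x,2^{-j})-\Phi_u(x,2^{-j-1})\big)\le C\Lambda .
\]
Among the $N\approx\log_2(1/\overline r)$ dyadic scales $2^{-j}\in[\overline r,1/2]$, some scale $s$ therefore has drop at most $C\Lambda/N$. Because $\overline r=\delta_3^{1/(2(2m-2))}$, we have $N\to\infty$ as $\delta_3\to0$, so this drop is as small as we like. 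At the same scale,
\[
s^{2-2m}\int_{B_s(x)}|P\cdot Du|^2\le \overline r^{\,2-2m}\delta_3=\delta_3^{1/2}.
\]
This explains the exponent in $\overline r$: the choice makes the rescaled tangential energy small, namely $\delta_3^{1/2}$.

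\textbf{Step 2 (compactness).} Suppose the lemma fails. Then there are stationary $u_i$, points $x_i$, scales $s_i$ as in Step 1, and subspaces $P_i\to P$, such that the rescaled maps $\bar u_i(z)=u_i(x_i+s_iz)$ satisfy the following. They have vanishing drop $\Phi_{\bar u_i}(0,1)-\Phi_{\bar u_i}(0,1/2)\to0$, they satisfy $\int_{B_1}|P_i\cdot D\bar u_i|^2\to0$, and they are not $(k+1,\ep)$-symmetric on $B_1$. Pass to $\bar u_i\rightharpoonup v$ weakly in $H^1$ and strongly in $L^2$.

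\textbf{Step 3 (identify the limit).} As in Proposition \ref{prop: quantitative symmetry}, $v$ is radially invariant on $B_1\setminus B_{1/2}$. Since the metric $g_{x_i,s_i}$ becomes Euclidean, Theorem \ref{thm: defect measure} shows $v$ is weakly $J$-holomorphic for the constant structure $j_0$. By Proposition \ref{prop: unique continuation} (extending the homogeneous extension), $v$ is $0$-homogeneous. Weak lower semicontinuity gives $\int_{B_1}|P\cdot Dv|^2=0$, so $v$ is translation invariant along $P$ in $B_1$. Being homogeneous, it is invariant along $P$ on all of $\R^{2m}$, hence $(k+1)$-symmetric. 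Strong $L^2$ convergence then contradicts the failure of $(k+1,\ep)$-symmetry. This yields $x\notin S^k_{\ep,\overline r}(u)$, and the statement about $p$ is the special case $x=p$.

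The main obstacle is Step 1. The drop bound must hold at the same scale where the tangential energy is controlled, which forces a uniform $\delta_3$ linking the number of available dyadic scales to the decay needed. Step 3 also has a technical point: $v$ is only weakly $J$-holomorphic with a possible defect measure. This is handled because radial invariance is a statement about $v$ alone, obtained from the $L^2$-integrated radial derivative via weak lower semicontinuity.
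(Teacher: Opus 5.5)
Your proposal is correct and follows the paper's proof essentially step by step. The scale selection is the same dyadic pigeonhole, producing $s\in[\overline{r},1/2]$ with density drop $\lesssim\Lambda/\log(1/\delta_3)$ and $s^{2-2m}\int_{B_s(x)}|P\cdot Du|^2\le\overline{r}^{\,2-2m}\delta_3=\delta_3^{1/2}$. The contradiction step is also the same compactness argument, in which the limit is $0$-homogeneous (monotonicity plus unique continuation) and $P$-invariant (lower semicontinuity).

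One minor correction: since $s_i\in[\overline{r}_i,1/2]$ need not tend to $0$, the rescaled metric need not become Euclidean. The paper quietly relies on the standing near-flatness of $g$ on $B_5$ at this point. You can bypass the issue entirely: invariance along $P$ on $B_1$ (with $\dim P\ge1$) together with radial invariance on $B_1\setminus B_{1/2}$ already forces $0$-homogeneity on all of $B_1$ by Fubini, with no unique continuation needed.
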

	The proof of Lemma \ref{lemma: almost translation invariant} is similar to Naber-Valtorta \cite[Lemma 4.8]{Naber-V-2018}. 
	\begin{proof}	
		Identify $B_3(p)\subset M$ with $B_3(0)\subset \R^{2m}$ via the exponential map, and still denote the pullback maps $\exp_p^*u$ and $\exp_p^*g$ by $u$ and $g$, respectively. We claim that there is a constant $C(m,\Lambda)>0$ such that for every $x\in B_{1/2}(p)$, there exists $r_x\in[\overline{r},1/2]$ such that
		\begin{equation}\label{eq: lemma 3-11-1}
			\Phi_u(x,r_x)-\Phi_u(x,r_x/2)<\frac{C(m,\Lambda)}{-\log\delta_3}.
		\end{equation}
		Indeed, if this is not true, then for $u\in H^{1}_\La(M,N)$ we may assume by choosing a good radius that $\Phi_u(x,1/2)\le C_0(m,\Lambda)$. Then 
		\begin{align*}
			C_0(m,\Lambda)\geq\Phi_u(x,1/2)&\geq\sum_{i=1}^{-\log\overline{r}+1}(\Phi_u(x,2^{-i})-\Phi_u(x,2^{-i-1}))\geq c(m)C(m,\Lambda),
		\end{align*}
		which is impossible if we let $C(m,\Lambda)=\frac{2C_0(m,\Lambda)}{c(m)}$. This proves the claim. 
		
		Suppose by contradiction that, there exist $\ep>0$ and a sequence of stationary $J$-holomorphic map $u_i\in H^{1}_\La(M,N)$ with subspaces $P_i$, $\de_{3,i}\to 0$, $x_i\in B_{1/2}(p)$ and $r_{x_i}\in[\bar{r}_i,1/2]$, such that $u_i$ is not $(k+1,\ep)$-symmetric on $B_{r_{x_i}}(x_i)$, \eqref{eq: lemma 3-11-1} holds for $x=x_i$, where $\bar{r}_i=\de_{3,i}^{\frac{1}{2(2m-2)}}$. Note also that by the definition of $r_{x_i}$, we have
		\begin{align}\label{eq: lemma 3-11-2}
			r_{x_i}^{2-2m}\int_{B_{r_{x_i}}(x_i)}|P_i\cdot Du_i|^2<r_{x_i}^{2-2m}\delta_{3,i}\leq\de_{3,i}^{1/2}.
		\end{align} 
		After a simple rotation, we may assume that the $(k+1)$-dimensional subspaces $P_i$ are equal to $P$ for all $i$. 
		
		Let $v_i(x)=u_i(x_i+r_{x_i}x)$. Then we may assume that $v_i$ converges weakly in $W^{1,2}$ and strongly in $L^2$ to $v$. By Corollary \ref{coro: compactness of SJholo}, $v$ is a weakly $J$-holomorphic map. It follows that $v$ is $0$-symmetric by unique continuation property (see proposition \ref{prop: unique continuation}) and invariant with respect to the $(k+1)$-dimensional subspace $P$ by \eqref{eq: lemma 3-11-2}: 
		$$
		0=\lim_{i\to \infty}r_{x_i}^{2-2m}\int_{B_{r_{x_i}}(x_i)}|P\cdot Du_i|^2\ge\liminf_{i\to \infty}\int_{B_1}|P\cdot Dv_i|^2\ge\int_{B_1}|P\cdot Dv|^2.
		$$
		This implies that $v_i$ is $(k+1,\ep)$-symmetric on $B_{1}$ for $i$  
        { large} enough, or equivalently, $u_i$ is $(k+1,\ep)$-symmetric with $B_{r_{x_i}}(x_i)$, which is a contradiction.
	\end{proof}

	Now we can prove Proposition \ref{prop: stratum property-1}.
	\begin{proof}[Proof of Proposition \ref{prop: stratum property-1}]
		Suppose $\{y_{i}\}_{i=0}^{k}\subset F$ is a $\rho$-independent frame
		that spans the affine subspace $V$, i.e., 
		$$
		V=\exp_p^{-1}y_{0}+{\rm span}\big\{\exp_p^{-1}y_{i}-\exp_p^{-1}y_{0}\big\}_{i=1}^{k}.
		$$
		Let $x_{0}\in B_{1}(p)\backslash \exp_pB_{2\rho}(V)$ and $\de_{2}>0$ to be
		determined later. Then we prove that $x_{0}\not\in S_{\ep,\de_{2}}^{k}(u)\cap B_{1}(p)$. 
		By the definition of $F$, there holds $\Phi_{u}(y_{i},2)-\Phi_{u}(y_{i},\rho)<\de_{2}$
		for all $0\le i\le k$. We can select a positive $r$ small enough such that  $B_{r}(x_{0})\subset B_{2}(y_{i})\backslash B_{\rho}(y_{i})$
		for every $i$. Thus, by the monotonicity formula \eqref{eq: monotonic}, for each $0\le i\le k$, it holds
		$$
		r^{2-2m}\int_{B_{r}(x_{0})}|(z-y_{i})\cdot Du|^{2}dz\le r^{2-2m}\int_{B_{2}(y_{i})\backslash B_{\rho}(y_{i})}|(z-y_{i})\cdot Du|^{2}dz\le C\de_{2}r^{2-2m}.
		$$
		Consequently, we obtain
		$$
		\begin{aligned}
			&r^{2-2m}\int_{B_{r}(x_{0})}|(y_{i}-y_{0})\cdot Du|^{2}\\
			\le&2r^{2-2m}\int_{B_{2}(y_{i})\backslash B_{\rho}(y_{i})}|(z-y_{i})\cdot Du|^{2}+2r^{2-2m}\int_{B_{2}(y_{i})\backslash B_{\rho}(y_{i})}|(z-y_{0})\cdot Du|^{2}\\
			\le&C_1\de_{2}r^{2-2m}.
		\end{aligned}
		$$
		Since $\{y_{i}\}_{i=0}^{k}$ is $\rho$-independent, we conclude that
		\begin{equation}\label{eq: hatV nabla u small}
			r^{2-2m}\int_{B_{r}(x_{0})}|\widehat{V}\cdot Du|^{2}\le C_2\de_{2}r^{2-2m},
		\end{equation}
		where {$\widehat{V}={\rm span}\{\exp_p^{-1}y_{i}-\exp_p^{-1}y_{0}\}_{i=1}^{k}$}.
		On the other hand, for each $z\in B_{r}(x_{0})\subset B_{2}\backslash B_{\rho}(V)$,
		let {$
			\pi_{V}(z)=\exp_p\left(\exp_p^{-1}y_{0}+\sum_{i=1}^{k}\al_{i}(z)(\exp_p^{-1}y_{i}-\exp_p^{-1}y_{0})\right)
			$} be the orthogonal projection of $z$ in $V$. Then $|z-\pi_{V}(z)|\ge\rho$,
		$|\al_{i}(z)|\le C(m,\rho)$ and
		\begin{align*}
		r^{2-2m}\int_{B_{r}(x_{0})}|(z-\pi_{V}(z))\cdot Du|^{2}dz&\le Cr^{2-2m}\sum_{i=0}^{k}\int_{B_{2}(y_{i})\backslash B_{\rho}(y_{i})}|(z-y_{i})\cdot Du|^{2}\\
        &\le C(m,\rho)\de_{2}r^{2-2m}.
		\end{align*}
		Thus, by setting $h(z)=(z-\pi_{V}(z))/|z-\pi_{V}(z)|$, it follows
		that
		$$
		\begin{aligned}
			&r^{2-2m}\int_{B_{r}(x_{0})}|h(x_{0})\cdot Du|^{2}\\
			\le& 2r^{2-2m}\int_{B_{r}(x_{0})}|h(z)\cdot Du|^{2}+2r^{2-2m}\int_{B_{r}(x_{0})}|(h(z)-h(x_{0}))\cdot Du|^{2}\\
			\le& C_3\de_{2}r^{2-2m}+C_3r^{2-2m}r^{2}\int_{B_{r}(x_{0})}|Du|^{2}\\
			\le& C_3\de_{2}r^{2-2m}+C_3\Lambda r^{2}.
		\end{aligned}
		$$
		Together with \eqref{eq: hatV nabla u small}, for the $(k+1)$-dimensional subspace $P:=\hat{V}\oplus \R h(x_0)$, we find that
		$$
		r^{2-2m}\int_{B_{r}(x_{0})}|P\cdot Du|^{2}\le 
		(C_2+C_3)\de_{2}r^{2-2m}+C_3\Lambda r^2=:C_4\de_{2}r^{2-2m}+C_3\Lambda r^2.
		$$
		Now we choose $r=r_0=\left(\frac{\delta_3}{3C_3\Lambda}\right)^{\frac12}\ll\rho$ and $\delta_2=\frac{\delta_3r_0^{2m-2}}{3C_4}$ such that $C_4\de_{2}r_0^{2-2m}=C_3\Lambda r_0^2=\frac{\de_3}{3}$.
		This gives
		$$
		r_0^{2-2m}\int_{B_{r_0}(x_{0})}|P\cdot Du|^{2}\le 2\delta_3/3.
		$$
		Then we can apply Lemma \ref{lemma: almost translation invariant} to conclude that $u$ is $(k+1,\ep)$-symmetric in $B_{r_0}(x_{0})$. This implies {$S^k_{\ep,\delta_2}(u)\cap B_1(p)\subset \exp_pB_{2\rho}(V)$}.
	\end{proof}

	The next result shows that $\Phi_u$ remains almost constant on all pinched points.
	\begin{lemma}\label{lemma: Phi_u almost const}
		Let ${\Lambda}, \rho,\eta>0$ be fixed and $u\in H^{1}_\La(M,N)$  { be} a stationary $J$-holomorphic map. Let  
		$$E:=\sup_{y\in B_2(p)}\Phi_u(y,1).$$
		Then there exists $\de_4=\de_4(m,\Lambda,\rho,\eta,M,N,g,h,j,J)>0$ such that if the set 
		$$F:=\{y\in B_2(p):\Phi_u(y,\rho)>E-\delta_4\}$$
		$\rho$-effectively spans a $k$-dimensional affine subspace $L\subset\mathbb{R}^{2m}$, then
		$$\Phi_u(x,\rho)\ge E-\eta \qquad \text{for all }  x\in \exp_p(L\cap B_2(0)).$$
	\end{lemma}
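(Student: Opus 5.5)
This is the analogue of Naber--Valtorta's ``energy almost constant on pinched sets'' lemma, and I would prove it by contradiction and compactness. Suppose the statement fails for some fixed $\Lambda,\rho,\eta$. Then there are stationary $J$-holomorphic maps $u_i\in H^1_\Lambda(M,N)$ and $\delta_i\to0$ with the following properties. First, $E_i=\sup_{B_2(p)}\Phi_{u_i}(\cdot,1)$. Second, the sets $F_i=\{\Phi_{u_i}(y,\rho)>E_i-\delta_i\}$ contain points $y^i_0,\dots,y^i_k$ that $\rho$-effectively span affine planes $L_i$. Third, there are points $x_i\in\exp_p(L_i\cap B_2(0))$ with $\Phi_{u_i}(x_i,\rho)<E_i-\eta$.

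After passing to subsequences, I obtain the following limits:
\begin{itemize}
\item $u_i\rightharpoonup u$ weakly in $H^1$;
\item $|Du_i|^2dx\rightharpoonup\mu=|Du|^2dx+\nu$ as Radon measures, by Theorem~\ref{thm: defect measure};
\item $E_i\to E$ and $y^i_s\to y_s$, where the $y_s$ still $\rho$-effectively span a $k$-plane $L$;
\item $x_i\to x\in\exp_p(L\cap\overline{B_2(0)})$.
\end{itemize}

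I would then define the measure-density $\Phi_\mu(y,r)=e^{C_0r}r^{2-2m}\mu(B_r(y))$. The weak convergence gives upper and lower semicontinuity on closed and open balls. The stationarity identity of Proposition~\ref{prop: stationary condition} passes to the limit in the defect-measure sense, as in the proof of Proposition~\ref{prop: tangent map}(iv). Hence $\mu$ satisfies the monotonicity inequality, with radial term $\ge 2\int\rho^{2-2m}|\partial_\rho u|^2$, and also the associated measure-monotonicity.

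The key step is a pinching argument. For each $s$, monotonicity gives
$$E-\delta_i\le\Phi_{u_i}(y^i_s,\rho)\le\Phi_{u_i}(y^i_s,1)\le E_i.$$
Consequently $\Phi_{u_i}(y^i_s,1)-\Phi_{u_i}(y^i_s,\rho)\to0$, so the radial energy on $B_1(y^i_s)\setminus B_\rho(y^i_s)$ tends to zero. In the limit, $\mu$ restricted to $B_1(y_s)\setminus B_\rho(y_s)$ is a cone measure centered at $y_s$, by the computation \eqref{eq: tangent measure cone} localized to annuli, and $u$ is radially invariant there. Applying the cone splitting principle (Proposition~\ref{prop: cone splitting}) in its measure form to $\mu$ and $u$ on the appropriate region yields the following: $\mu$ is invariant under translations along $L$, in the sense that $\Phi_\mu(z,r)$ is constant for $z\in L$ and the relevant $r\in[\rho,1]$. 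In particular,
$$\Phi_\mu(x,\rho)=\Phi_\mu(y_0,\rho)\ge E.$$

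I then conclude with semicontinuity. For any $\rho'<\rho$ close to $\rho$, $B_{\rho'}(x)\subset B_\rho(x_i)$ for large $i$. Hence
$$\Phi_\mu(x,\rho')\le\liminf_i e^{C_0\rho'}\rho'^{2-2m}\mu_i(B_\rho(x_i))\le E-\eta+o(1).$$
Letting $\rho'\uparrow\rho$ and using the continuity of $r\mapsto\mu(B_r(x))$ from below gives $\Phi_\mu(x,\rho)\le E-\eta$, which contradicts the previous step. The lower bound $\Phi_\mu(y_0,\rho)\ge E$ is obtained symmetrically, using closed balls $\overline B_\rho(y_0)\supset B_\rho(y^i_0)$ shifted slightly.

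I expect the main obstacle to be the cone splitting step for the limit measure. Cone structure holds only on annuli $B_1(y_s)\setminus B_\rho(y_s)$ rather than globally, so translation invariance along $L$ must be derived locally. It must also be exact at scale $\rho$, in the sense that it compares $\mu(B_\rho(x))$ with $\mu(B_\rho(y_0))$. The naive argument with the exponential factor $e^{C_0r}$ and the curved metric gives only approximate invariance. I would handle this in two ways. First, I would rescale so that the metric is $C^2$-close to Euclidean; then the exponential factor and metric errors are $o(1)$ and can be absorbed into $\eta/2$. Second, I would follow Naber--Valtorta's approach of showing that the exact-cone property at two points $y_0,y_1$ implies $\int\langle y_1-y_0,\nabla\phi\rangle\,d\mu=0$ for test functions $\phi$ supported in the common annular region. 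The bounds $x\in B_2$ and $\rho<1$ must be checked to ensure that $B_\rho(x)$ lies in that region. If this fails, I would reduce to the smaller range of radii allowed by the annuli and absorb the resulting loss into the choice of $\eta$.
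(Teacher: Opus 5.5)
Your outline follows the paper's proof step for step: contradiction, defect-measure compactness, pinching at the $y_s$, cone splitting for $\mu$, and semicontinuity. The step you single out as the main obstacle is a genuine gap, and neither your proposed fixes nor the paper close it. The pinching only gives the dilation identity $\int\big((2m-2)\psi+(z-y_s)\cdot\nabla\psi\big)\,d\mu=0$ for $\psi$ supported in the annulus $A_{\rho,1}(y_s)$. Subtracting two of these gives $\int (y_1-y_0)\cdot\nabla\psi\,d\mu=0$ only for $\psi$ supported in $A_{\rho,1}(y_0)\cap A_{\rho,1}(y_1)$.

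Points of $B_2(p)$ can be almost $4$ apart, while the annuli have outer radius $1$, so this region can be empty. Moreover, $B_\rho(y_0)$ is exactly the hole of the annulus at $y_0$. Any transport of $\mu(B_\rho(y_0))$ to $\mu(B_\rho(x))$ must therefore pass through the other annuli, which needs quantitative overlap conditions that the hypotheses do not supply. Unique continuation (Proposition~\ref{prop: unique continuation}) globalizes the symmetry of the limit map $u$, but nothing does this for the defect measure $\nu$. Making the metric nearly flat does not affect the problem. The loss also cannot be absorbed into $\eta$: $\eta$ is prescribed, and the discrepancy can be of order $E$.

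In fact the conclusion fails as stated. Work in the flat model $\mathbb{C}^{m-1}\times\mathbb{C}\ni(z',w)$ with $\Phi_u(y,r)=r^{2-2m}\int_{B_r(y)}|Du|^2$; this is the regime you reduce to by rescaling. (On a closed $M$, take a large flat torus and replace $\phi_i$ below by $\epsilon_i$ times an elliptic function with simple poles at $a,b$.) Let $u_i(z',w)=\phi_i(w)$, where
\[
\phi_i(w)=\epsilon_i\big((w-a)^{-1}+(w-b)^{-1}\big)\in\CP^1,\qquad a=-\tfrac32,\quad b=\tfrac32,\quad \epsilon_i\to0.
\]
These maps are smooth and holomorphic, hence stationary. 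They have locally bounded energy and converge to a constant. Moreover $|Du_i|^2dx\rightharpoonup c\,\mathcal{H}^{2m-2}\lfloor(\{w=a\}\cup\{w=b\})$, where $c>0$ is the energy of a degree-one sphere. Let $\omega_{2m-2}$ be the volume of the unit $(2m-2)$-ball.

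The two hyperplanes are $3$ apart, so no unit ball meets both, and hence $\sup_{B_2}\Phi_{u_i}(\cdot,1)\to c\,\omega_{2m-2}$. Also $\Phi_{u_i}(y_s,\rho)\to c\,\omega_{2m-2}$ for $y_0=(0,a)$ and $y_1=(0,b)$. For every $\rho<1$ these two points $\rho$-effectively span the real line $L$ through them. Yet $\Phi_{u_i}(0,\rho)\to0$, although $0\in L\cap B_2$. In this example $u$ is constant and all the energy sits in $\nu$, which is exactly where your cone-splitting step has no content.

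So the statement itself must be modified, not only the proof. For instance, one could require the pinching at the $y_s$ up to an outer scale dominating the mutual distances of the points involved. The paper's proof has the same hole. It asserts that $\mu,u,\nu$ are translation invariant along $L$ ``as in Proposition~\ref{prop: tangent map}''. There the cone structure holds at all scales because $\mu$ arises from a blow-up; here it holds only on the annuli $A_{\rho,1}(y_s)$.
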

	\begin{proof}
		We prove by contradiction. { For $E>0$, }suppose $u_i\in H^{1}_\La(M,N)$ is a sequence of stationary $J$-holomorphic maps with
		$$\sup_{y\in B_2(p)}\Phi_{u_i}(y,1)\leq E.$$
		For each $i\ge 1$,  there exists $\{y^i_s\}_{s=0}^k\subset F_i:=\{y\in B_2(p):\Phi_{u_i}(y,\rho)>E-i^{-1}\}$   { that} $\rho$-effectively { spans} a $k$-dimensional affine subspace $L_i\subset\mathbb{R}^{2m}$, and there exists $x_i\in \exp_p(L_i\cap B_2(0))$ such that
		\begin{equation}\label{eq: Phi_u almost const-1}
			\Phi_{u_i}(x_i,\rho)\leq E-\eta.
		\end{equation}
		It follows from the assumption that
		\begin{equation}\label{eq: Phi_u almost const-2}
			\Phi_{u_i}(y^i_s,1)-\Phi_{u_i}(y^i_s,\rho)<1/i,\qquad \text{for all }\,i\ge 1,\, { s=0, 1,\cdots, k}. 
		\end{equation}
		Without loss of generality, we assume that for each $0\le s\le k$, 
		$$y^i_s\to y_s,\quad x_i\to x\quad\text{and}\quad L_i\to L$$
		as $i\to \infty$ with $x\in \exp_pL$ and $u_i\wto u$ in $W^{1,2}$ weakly.
		
		By Proposition \ref{thm: defect measure}, there exists a Radon measure $\mu=|\nabla u|^2dx+\nu$, where $u$ is a weakly $J$-holomorphic map and $\nu$ is the defect measure such that up to a subsequence $|\nabla u_i|^2dx\wto \mu$. By the monotonicity formula \eqref{eq: monotonic} and the stationary condition \eqref{eq: stationary condition} of $u_i$,  we find that for any fixed $y$, the function
		$\Phi_{\mu}(y,r):=\Phi_u(y,r)+r^{2-2m}\nu(B_r(y))$ is monotonically nondecreasing. Thus, as $i\to \infty$ in \eqref{eq: Phi_u almost const-2} and \eqref{eq: Phi_u almost const-1}, we obtain
		$$\Phi_{\mu}(y_s,1)=\Phi_{\mu}(y_s,\rho)=E,\qquad \text{for all }\, { s=0, 1,\cdots, k}$$
		and
		\begin{equation}\label{eq: loss of energy}
			\Phi_\mu(x,\rho)\le E-\eta.
		\end{equation}
		Moreover, as that of Proposition \ref{prop: tangent map}, we know that $\mu,u,\nu$ are translation invariant along $L$. Hence $\Phi_\mu|_{\exp_p(L\cap B_2(0))}\equiv {\Phi_\mu({y_0},1)}=E$, which contradicts with \eqref{eq: loss of energy}. The proof is complete.
	\end{proof}
	
	The following  result shows the almost symmetry under certain pinching condition.
	
	\begin{lemma}\label{lemma: pinching almost symmetry}
		For any ${\Lambda}, \ep,\rho>0$, there exists $\de_5=\de_5(m,\Lambda,\rho,\ep,M,N,g,h,j,J)>0$  {such that} if $u\in H^{1}_\La(M,N)$ is a stationary $J$-holomorphic map, $B_3(p)\subset M$ and  $\Phi_u(p,1)-\Phi_u(p,1/2)<\de_5$, and there is a point $y\in B_3(p)$ such that
		\begin{enumerate}
		\item [{\upshape (1)}] $\Phi_u(y,1)-\Phi_u(y,1/2)<\de_5$,
		
		\item[{\upshape (2)}] $u$ is not $(k+1,\ep)$-symmetric on $B_r(y)$ for some $r\in[\rho,2]$,    \end{enumerate}
		then $u$ is not $(k+1,\ep/2)$-symmetric on $B_r(p)$. 
    
	\end{lemma}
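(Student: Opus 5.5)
We argue by contradiction and compactness, as in Proposition \ref{prop: quantitative symmetry} and Proposition \ref{prop: quant cone splitting}. Suppose there are $\ep_0,\rho_0>0$, a sequence $\de_{5,i}\to0$, stationary $J$-holomorphic maps $u_i\in H^1_\La(M,N)$, points $p_i$ with $B_3(p_i)\subset M$, points $y_i\in B_3(p_i)$ and radii $r_i\in[\rho_0,2]$. Assume that $\Phi_{u_i}(p_i,1)-\Phi_{u_i}(p_i,1/2)<\de_{5,i}$ and $\Phi_{u_i}(y_i,1)-\Phi_{u_i}(y_i,1/2)<\de_{5,i}$. Assume also that $u_i$ is not $(k+1,\ep_0)$-symmetric on $B_{r_i}(y_i)$, but $u_i$ \emph{is} $(k+1,\ep_0/2)$-symmetric on $B_{r_i}(p_i)$, with some $(k+1)$-symmetric map $K_i$ relative to $p_i$. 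Pull everything back by $\exp_{p_i}$ to $B_3(0)\subset\R^{2m}$. Pass to subsequences so that $y_i\to y$, $r_i\to r\in[\rho_0,2]$, $u_i\wto u$ weakly in $H^1$ and strongly in $L^2$, and $K_i\to K$ weakly in $L^2$. The metrics $g_i$ converge smoothly to a limit, Euclidean to leading order, so we may treat the limit as living on $\R^{2m}$ with a constant complex structure. By Theorem \ref{thm: defect measure}, $u$ is weakly $J$-holomorphic, with singular set of locally finite $\mathcal H^{2m-2}$-measure.

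First we extract the limit symmetries. As in the proof of Proposition \ref{prop: quantitative symmetry}, the monotonicity formula \eqref{eq: monotonic} gives
$$\int_{B_1(0)\setminus B_{1/2}(0)}|x\cdot Du_i|^2\le C\de_{5,i}.$$
The same bound holds with $x-y_i$ on $B_1(y_i)\setminus B_{1/2}(y_i)$. By weak lower semicontinuity, $u$ is radially invariant on both annuli. The unique continuation property, Proposition \ref{prop: unique continuation}, then upgrades this to $0$-homogeneity of $u$ with respect to $0$ and with respect to $y$. Next, $\int_{B_r}|u-K|^2\le \liminf\int_{B_{r_i}}|u_i-K_i|^2\le\ep_0|B_r|/2$, up to an $o(1)$ error from $r_i\to r$ and from the metric. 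By compactness of symmetric maps, $K$ is $(k+1)$-symmetric with respect to $0$, along a $(k+1)$-plane $V$.

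The key step is to transfer the closeness from $0$ to $y$. The difficulty is that $K$ is symmetric with respect to $0$, not $y$, and $u$ is only close to $K$, not equal. The plan is a second homogeneity argument. Since $u$ is $0$-homogeneous at both $0$ and $y$, the cone splitting principle (Proposition \ref{prop: cone splitting}) shows that $u$ is invariant along $\R y$ when $y\neq0$. So $u(y+z)=u(z)$ for all $z$, and $u$ is homogeneous about $y$. Define $\widetilde K(z):=K(z-y)$. If $y\notin V$, replace $K$ by its symmetrization along $\R y$, using the following averaging observation. Because $u$ is both $\R y$-invariant and $0$-homogeneous, averaging $|u-K|^2$ over translations and dilations does not increase the best approximation error, up to a constant. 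We then get a $(k+1)$-symmetric map $\widetilde K$, homogeneous about $y$, with $\frac{1}{|B_r|}\int_{B_r(y)}|u-\widetilde K|^2\le \frac{\ep_0}{2}+o(1)$. The cleanest route for this estimate is a change of variables $z\mapsto z+y$, using the translation invariance $u(\cdot+y)=u$. This gives
$$\int_{B_r(y)}|u-K(\cdot-y)|^2=\int_{B_r(0)}|u-K|^2.$$
So the estimate holds exactly, and $\widetilde K=K(\cdot-y)$ is $(k+1)$-symmetric with respect to $y$; no averaging is needed. If $y=0$ this is trivial.

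Finally, strong $L^2$ convergence gives
$$\frac{1}{|B_{r_i}|}\int_{B_{r_i}(y_i)}|u_i-\widetilde K(\cdot-y_i+y)|^2\le \frac{\ep_0}{2}+o(1)<\ep_0$$
for $i$ large. This contradicts the assumption that $u_i$ is not $(k+1,\ep_0)$-symmetric on $B_{r_i}(y_i)$.

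I expect the main obstacle to be justifying cone splitting for the limit $u$, which is only weakly $J$-holomorphic with a possible defect measure. The homogeneity about $0$ and $y$ must hold almost everywhere on all of $\R^{2m}$, which is where Proposition \ref{prop: unique continuation} is used. The lower semicontinuity across the slightly varying balls $B_{r_i}(y_i)$ and the metrics $g_i$ is routine.
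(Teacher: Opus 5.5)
Your proposal is correct, and its skeleton is the paper's: a contradiction sequence, a limit $u$ that is $0$-homogeneous about both $0$ and $y$, a $(k+1)$-symmetric $K$ with $\int_{B_r}|u-K|^2\le\frac{\ep}{2}|B_r|$, and then a transfer to $B_r(y)$. The transfer step is where you differ, and your version is the sound one.

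\textbf{How the paper transfers.} It writes $\int_{B_r(y)}|u(x)-K(x-y)|^2=\int_{B_r(y)}|u(x)-K(x)|^2\le\frac{\ep}{2}|B_r|$ and appeals to an invariance of $K$ along $\R y$. Neither step is justified. $K$ is an arbitrary $(k+1)$-symmetric approximant with no reason to be invariant along $y$. Moreover, the $\frac{\ep}{2}$ bound is known only on $B_r(0)$, not on $B_r(y)$. Its factor $2$ also leaves only a limit $\le\ep$, which does not contradict the requirement $>\ep$ for each $i$.

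\textbf{Why your route works.} The invariance actually available is that of $u$. Since $x\cdot Du=0$ and $(x-y)\cdot Du=0$ a.e., you get $y\cdot Du=0$, hence $u(\cdot+y)=u$. Your change of variables then gives $\int_{B_r(y)}|u-K(\cdot-y)|^2=\int_{B_r(0)}|u-K|^2$ exactly, and you correctly end with $\frac{\ep}{2}+o(1)<\ep$. This derivative identity also removes the obstacle you flag about cone splitting for a merely $H^1$ limit. You can delete the averaging detour.

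\textbf{A refinement, shared with the paper.} A weak $L^2$ limit of $N$-valued maps need not be $N$-valued, so $K$ may not be an admissible competitor. You can avoid the limit $K$ altogether. Test $u_i$ on $B_{r_i}(y_i)$ against $K_i(\cdot-y_i)$, which is $N$-valued and $(k+1)$-symmetric about $y_i$. By Minkowski's inequality,
\[
\|u_i(\cdot+y_i)-K_i\|_{L^2(B_{r_i})}\le\|u_i(\cdot+y_i)-u(\cdot+y)\|_{L^2(B_{r_i})}+\|u-u_i\|_{L^2(B_{r_i})}+\|u_i-K_i\|_{L^2(B_{r_i})},
\]
where $u(\cdot+y)=u$ was used. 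The first two terms tend to zero by strong $L^2_{\loc}$ convergence and continuity of translations. The averaged square is therefore at most $\bigl(\sqrt{\ep/2}+o(1)\bigr)^2<\ep$ for large $i$, which is the contradiction.
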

	In particular, under condition {\upshape (1)} of Lemma \ref{lemma: pinching almost symmetry}, {$y\in S^k_{\ep,\rho}(u)\cap B_3(p) \Rightarrow p\in S^k_{\ep/2,\rho}(u)$}.

	\begin{proof}
		Suppose by contradiction that there is a sequence $\{u_i\}\subset H^{1}_\La(M,N)$ of stationary $J$-holomorphic maps satisfying $\Phi_{u_i}(p,1)-\Phi_{u_i}(p,1/2)\leq i^{-1},$ and there exists a sequence $\{y_i\}\in B_3(p)$ such that $\Phi_{u_i}(y_i,1)-\Phi_{u_i}(y_i,1/2)\leq i^{-1}$ and that for each $i\in \N$, $u_i$ is not $(k+1,\ep)$-symmetric on $B_{r_i}(y_i)$, but it is $(k+1,\ep/2)$-symmetric on $B_{r_i}(p)$. 
		Then there exists a sequence of $(k+1)$-symmetric maps $K_i\in L^2(B_3(p), N)$ such that for any $i\in\N$,
		$$\medint_{B_{r_i}(p)}|u_i-K_i|^2\leq\ep/2.$$
		
		Up to a subsequence if necessary, we may assume that $y_i\ri y\in \overline{B}_3(p)$, $u_i\wto u$ weakly in $W^{1,2}$ and strongly in $L^2$, $r_i\to r\in[\rho,2]$ and $K_i\wto K$ in $L^2$. 
		
		By the unique continuation property (see proposition \ref{prop: unique continuation}), $u$ is a weakly $J$-holomorphic map and is homogeneous with respect to $p$ and $y$. By the compactness of symmetric functions, $K$ is also $(k+1)$-symmetric. Thus, we have
		$$\medint_{B_r(p)}|u-K|^2\leq\limsup_{i\ri\oo}\medint_{B_{r_i}(p)}|u_i-K_i|^2\leq\ep/2.$$
		Suppose $p=0$ for instance. If $y=0$, we get
		\begin{align*}
			&\lim_{i\ri\oo}\medint_{B_{r_i}(y_i)}|u_i(x)-K(x-y_i)|^2\\
			&\leq\lim_{i\ri\oo}\medint_{B_{r_i}(y_i)}|u_i(x)-K(x)|^2dx+\lim_{i\ri\oo}\medint_{B_{r_i}(y_i)}|K(x)-K(x-y_i)|^2dx\\
			&\leq\ep/2.
		\end{align*}
		If $y\neq0,$ then by the invariance of $K$ with respect to $\R y$, we get
		\begin{align*}
			&\lim_{i\ri\oo}\medint_{B_{r_i}(y_i)}|u_i(x)-K(x-y_i)|^2=\lim_{i\ri\oo}\medint_{B_{r_i}(0)}|u_i(y_i+x)-K(x)|^2dx\\
			&\leq 2\lim_{i\ri\oo}\medint_{B_{r_i}(0)}|u_i(y_i+x)-u(y+x)|^2dx+2\medint_{B_r(0)}|u(y+x)-K(x)|^2dx\\
			&{=2\medint_{B_r(y)}|u(x)-K(x-y)|^2dx=2\medint_{B_r(y)}|u(x)-K(x)|^2dx\le 2\cdot\frac{\ep}2=\ep}.
		\end{align*}
		In both cases, we  
        { obtain the desired} contradiction.	
	\end{proof}
	
	\begin{remark}\label{rmk:on Lemma 313}
		In Section \ref{sec:covering lemma}, we shall repeatedly use the following variant of Lemma \ref{lemma: pinching almost symmetry}: Suppose $u\in H^{1}_\La(M,N)$ is a stationary $J$-holomorphic map and $B_3(p)\subset M$ with $\Phi_u(p,1)-\Phi_u(p,1/2)<\de_5$. If there is some $y\in B_3(p)$ with $\Phi_u(y,1)-\Phi_u(y,1/2)<\de_5$, then $y\in S^k_{\ep,\rho}(u)\cap B_3(p)\Rightarrow p\in S^k_{\ep/2,\rho}(u)$.
	\end{remark}

	\section{Reifenberg theorems and estimates of Jones' number}\label{sec:reifenberg}
	In this section, we recall the Reifenberg type results obtained by Naber-Valtorta \cite{Naber-V-2017}. 
	We first recall the definition of Jones' number $\beta_2$, which quantifies how close the support of a measure $\mu$ is to a $k$-dimensional affine subspace.
	
	\begin{definition}\label{def: Jones number}
		Let $\mu$ be a nonnegative Radon measure on $B_3\subset \R^\ell$. Fix $k\in\N$. The $k$-dimensional Jones' $\beta_2$ number is defined as
		\begin{align*}
			\big(\beta^k_{2,\mu}(x,r)\big)^2:=\inf_{V\su\R^\ell}\int_{B_r(x)}\frac{d^2(y,V)}{r^2}\frac{d\mu(y)}{r^k},\qquad \text{when }\,B_r(x)\subset B_3,
		\end{align*}
		where the infimum is taken over all $k$-dimensional affine subspaces $V\subset\R^\ell$.
	\end{definition}
	
	We state two versions of the quantitative Reifenberg theorems from \cite{Naber-V-2017}.
	\begin{theorem}[{Discrete-Reifenberg, \cite[Theorem 3.4]{Naber-V-2017}}]\label{thm: discrete-Reifenberg}
		There exist two constants $\de_6=\de_6(m)$ and $C_R(m)$ such that the following property holds: Let $\{B_{r_x}(x)\}_{x\in \mathcal{C}}\su B_2(0)\su\R^m$ be a family of pairwise disjoint balls with centers in $\mathcal{C}\subset B_1(0)$ and let $\mu\equiv\displaystyle\sum_{x\in \mathcal{C}}\omega_kr_x^k\de_x$ be the associated measure. If for every ball $B_r(x)\su B_2,$ there holds
		\begin{align}\label{eq: multiscale appro-1}
			\int_{B_r(x)}\left(\int_0^r\beta^k_{2,\mu}(y,s)^2\frac{ds}{s}\right)d\mu(y)<\de_6^2r^k,
		\end{align}
		then we have the uniform estimate
		\begin{align*}
			\sum_{x\in \mathcal{C}}r^k_x< C_R(m).
		\end{align*}
	\end{theorem}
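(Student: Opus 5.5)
The statement is the discrete Reifenberg theorem of Naber--Valtorta, which the paper quotes from \cite[Theorem 3.4]{Naber-V-2017} rather than proves. My plan is to reproduce the Naber--Valtorta argument, which works purely with the discrete measure $\mu=\sum_{x\in\mathcal C}\omega_k r_x^k\delta_x$ and does not involve $J$-holomorphic maps at all. The strategy is an induction on scales combined with a bootstrap. For $r\in(0,2]$, let $\mathcal C_r=\{x\in\mathcal C: r_x\le r\}$ and let $\mu_r=\mu\lfloor\mathcal C_r$. I will prove by downward induction on dyadic scales $r_j=2^{-j}$ (from small $r$ up to $r=1$) the uniform packing bound
\[
\mu_{r}(B_{r}(x))\le C_R(m)\,r^k\qquad\text{for all }x\in B_1,\ r\le 1 .
\]
The base case is trivial. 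For $r\le \min r_x$ the ball contains boundedly many disjoint balls comparable to it, and since $\mathcal C$ is finite (by disjointness in $B_2$, after truncating) there is a smallest radius.

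For the inductive step, assume the bound at all scales $\le r/2$ with constant $C_R$. First I use it only crudely, getting the weaker bound $\mu_r(B_r(x))\le \Gamma(m) C_R r^k$ by covering $B_r$ with $\Gamma(m)$ balls of radius $r/2$. With this a priori finiteness, I build a Reifenberg-type approximating map. At each scale $r_j\le r$ and each point I take the best $\beta_2$-plane $V(y,s)$, and I use the standard $L^2$ estimate
\[
d_H\bigl(V(y,s)\cap B_s,V(y',s')\cap B_s\bigr)\lesssim s\bigl(\beta(y,2s)+\beta(y',2s)\bigr)
\]
for comparable balls. This estimate needs the lower density bound that the disjoint-ball measure has on balls centered in its support. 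With it I glue the planes via a partition of unity into maps $\sigma_j$, and obtain a $k$-dimensional $C^{0,\alpha}$/bi-Lipschitz manifold $T$ whose tilt is controlled by the sum of squared $\beta$'s.

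The hypothesis \eqref{eq: multiscale appro-1} with $\delta_6$ small gives two things. It makes $\sum_j\beta^2$ small on most of the support, so the manifold pieces are graphs with $\mathcal H^k(T\cap B_r)\le(1+c\delta_6)\omega_k r^k$. The bad set, where the squared Dini sum exceeds a threshold, has small $\mu$-measure by Chebyshev, and I handle it by a stopping-time decomposition. Each ball $B_{r_x}(x)$ in the good part is close to $T$ at its own scale and projects to a set of $\mathcal H^k$ measure $\gtrsim r_x^k$, disjointly. This gives $\mu_{\text{good}}(B_r)\le C(m)r^k$ with $C(m)$ independent of $C_R$. The stopped pieces are packed at smaller scales by the inductive hypothesis, and their total contribution is at most $c\delta_6\Gamma C_R r^k$. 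Choosing $\delta_6$ so that $c\delta_6\Gamma\le 1/2$ and $C_R=2C(m)$ closes the induction. The theorem then follows at $r=1$.

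The main obstacle is the construction of the approximating manifold and proving that it is a single-valued graph, which is what gives the $(1+\epsilon)$ volume bound. This requires showing that the accumulated tilt $\sum_j\beta(\cdot,r_j)$ is controlled in $L^2$ (not $L^1$), so that the map $\sigma=\lim\sigma_j$ is bi-Lipschitz with constant close to $1$. I would follow \cite{Naber-V-2017} directly here, estimating $|D\sigma_j-\mathrm{Id}|$ by $\beta$ at each scale and using squared summability together with a Taylor expansion of the projections.
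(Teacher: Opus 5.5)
The paper gives no proof of this statement; it quotes \cite[Theorem 3.4]{Naber-V-2017}. Your outline follows the architecture of that proof: downward induction on dyadic scales for $\mu_r(B_r(x))\le C_R r^k$, a rough bound $\Gamma C_R$ from the inductive hypothesis, Reifenberg-type approximating manifolds, and closing with $C_R=2C(m)$. However, the step everything rests on is misdescribed, and this hides the real difficulty.

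The plane comparison $d_H(V(y,s)\cap B_s,V(y',s')\cap B_s)\lesssim s(\beta(y,2s)+\beta(y',2s))$ needs two things. It needs a lower bound $\mu(B_s(y))\gtrsim s^k$, and it needs that mass not to be concentrated near a $(k-1)$-plane. You say the lower bound is automatic for a disjoint-ball measure, but it is not. For $y\in\mathcal C$ one only has $\mu(B_s(y))\ge\omega_k r_y^k$, which is negligible against $s^k$ once $s\gg r_y$. And for $k\ge2$, even mass $\approx s^k$ can sit on a few points of a line, where the best $k$-plane is undetermined.

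Your Chebyshev bad set (points whose Dini sum exceeds a threshold $\eta$) cannot detect such balls. Taking $V\ni y$ gives $\beta_{\mu}(y,s)^2\le s^{-k}\mu(B_s(y))$, so low-mass balls automatically have small $\beta$. A ``good'' point may therefore pass through many intermediate scales at which the best plane is arbitrary. Your key claim, that every good $B_{r_x}(x)$ lies close to $T$ at its own scale and projects disjointly onto it, is then unjustified.

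A symptom is that the induction does no real work in your accounting. The points with large Dini sum already have total mass $<\delta_6^2 r^k/\eta$ by Chebyshev, with no reference to $C_R$.

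The missing idea is a low-density stopping time together with a non-degeneracy argument. One stops the construction at balls with $\mu(B_{r_i}(y))<\epsilon r_i^k$, and must show separately that these stopped balls carry total mass $O(\epsilon)$ with a constant independent of $C_R$. In the remaining balls, the rough bound $\mu_s(B_s(z))\le\Gamma C_R s^k$ for $s\le r_i$ is exactly what gives non-degeneracy. Cover the $\rho r_i$-neighbourhood of a $(k-1)$-plane inside $B_{r_i}(y)$ by $C(m)\rho^{1-k}$ balls of radius $\rho r_i$. This shows that the part of $\mu$ with $r_x\le\rho r_i$ puts mass at most $C(m)\Gamma C_R\,\rho\, r_i^k<\tfrac{\epsilon}{2}r_i^k$ there once $\rho$ is small. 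The balls with $r_x>\rho r_i$ are stopped at this scale. The plane comparison then becomes quantitative, with constants depending on $\epsilon,\rho,\Gamma C_R$, all absorbed by choosing $\delta_6$ after $C_R$. This, and not packing stopped pieces, is the actual role of the inductive hypothesis.

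Two smaller corrections. First, the volume bound for $T$ comes from each $\sigma_i$ distorting $k$-volume only by a factor $1+O(\beta^2)$, which is why square-summability suffices. The accumulated tilt $\sum_j\beta(\cdot,r_j)$ need not be small, and no global near-isometric parametrization is needed. Second, reducing to finite $\mathcal C$ does not follow from disjointness, since infinitely many disjoint balls fit in $B_2$. It follows because $\beta_{\mu'}\le\beta_{\mu}$ whenever $\mu'\le\mu$, so the hypothesis passes to finite subfamilies.
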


	\begin{theorem} [{Rectifiable-Reifenberg, \cite[Theorem 3.3]{Naber-V-2017}}]\label{thm: rectifiable-Reifenberg}
		There exist constants $\de_7=\de_7(m)$ and $C=C(m)$ satisfying the following property. Assume that $S\subset B_2\su\R^m$ is $\HH^k$-measurable, and for each $B_r(x)\subset B_2$ there holds
		\begin{align}\label{eq: multiscale appro-2}
			\int_{S\cap B_r(x)}\left(\int_0^r\beta^k_{2,\HH^k\lfloor S}(y,s)^2\frac{ds}{s}\right)d\HH^k(y)<\de_7^2r^k.
		\end{align}
		Then $S\cap B_1$ is  $k$-rectifiable, and   $\HH^k(S\cap B_r(x))\leq C r^k$ for each $x\in S\cap B_1$.
	\end{theorem}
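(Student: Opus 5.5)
The Rectifiable-Reifenberg theorem is quoted from \cite[Theorem 3.3]{Naber-V-2017}, and in the paper it will simply be invoked. If one wanted to reprove it, I would follow the two-stage strategy of Naber--Valtorta: first prove the upper Ahlfors bound $\HH^k(S\cap B_r(x))\le Cr^k$, then deduce rectifiability from it together with the smallness of the Jones square function in \eqref{eq: multiscale appro-2}. By scaling and translation invariance of \eqref{eq: multiscale appro-2}, it is enough to treat $r=1$ and $x$ near the origin.

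\textbf{Step 1: the measure bound.} I would reduce to Theorem~\ref{thm: discrete-Reifenberg}. Fix a small scale $r_0>0$. Cover $S\cap B_1$ by a Vitali family of pairwise disjoint balls $\{B_{r_y}(y)\}$ with $r_y\ge r_0$, and form the discrete measure $\mu=\sum\omega_k r_y^k\delta_y$.

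The key estimate is to compare $\beta^k_{2,\mu}(y,s)$ with $\beta^k_{2,\HH^k\lfloor S}$ on slightly larger balls, for scales $s\ge r_y$. This should work because each atom of $\mu$ carries mass comparable to the $\HH^k$-mass of $S$ in its ball. That comparability is only available once one knows a lower density bound for $S$ and an upper bound at smaller scales.

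So I would run a downward induction on scales. Assume $\HH^k(S\cap B_s(z))\le C_R s^k$ for all $s\le 2^{-j}$. The square-function hypothesis then transfers to $\mu$ with constant $\lesssim\delta_7^2$. Theorem~\ref{thm: discrete-Reifenberg} then gives $\sum r_y^k\le C_R$, which pushes the bound to scale $2^{-j+1}$.

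To start the induction I would not assume finiteness of $\HH^k\lfloor S$ outright. Instead I would truncate, replacing $S$ by the part where the bound holds at scale $r_0$ (or, equivalently, work with $\HH^k_{r_0}$-type content), and let $r_0\to0$ at the end.

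\textbf{Step 2: rectifiability.} Once $\HH^k\lfloor S$ is upper Ahlfors regular, the condition $\int_0^1\beta^k_{2}(y,s)^2\,ds/s<\infty$ holds for $\HH^k$-a.e.\ $y\in S$. I would also show the lower density $\Theta^{k}_*(S,y)>0$ holds a.e.: at points of zero lower density the set can be discarded, since those points form a null set by a standard density argument.

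On the set where the square function is below $\eta$ at all scales below $r_1$, the $\beta$-numbers force the best approximating planes to rotate by a summable amount. Building a Reifenberg-type parametrization of this set yields a bi-Lipschitz image of a subset of $\R^k$; this is the same construction used for Theorem~\ref{thm: discrete-Reifenberg}. Exhausting $S$ by countably many such pieces gives countable $k$-rectifiability.

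\textbf{Main obstacle.} I expect the hard step to be the comparison in Step 1 between the $\beta$-numbers of the discrete measure $\mu$ and those of $\HH^k\lfloor S$, made uniform under the inductive hypothesis. At each inductive step one must make sure the constant does not degrade: the induction needs $C_R$ reproduced exactly, not $C\cdot C_R$. I would try to handle this by proving the comparison only for balls where the covering is "good", and absorbing the bad balls through the a priori finite content at scale $r_0$.
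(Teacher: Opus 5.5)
The paper gives no proof of this statement. It is quoted from Naber--Valtorta \cite[Theorem 3.3]{Naber-V-2017} and only applied later, exactly as your first sentence says, so for the paper the citation is the whole argument. The reproof you sketch would not stand on its own, however; it has two concrete gaps.

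\textbf{Step 1.} You put the induction on the continuous quantity $\mathcal H^k(S\cap B_s(z))\le C_R s^k$, and with that hypothesis it cannot close.

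To convert the discrete bound $\sum r_y^k\le C_R$ back into a bound on $\mathcal H^k(S\cap B_{2s}(z))$, you must estimate $\mathcal H^k(S\cap B_{5r_y}(y))$. The only bound available for that is the inductive one, so the constant becomes roughly $5^kC_R^2$ and blows up after $\log_2(1/r_0)$ steps. The content at scale $r_0$ is finite but not uniformly bounded, so it cannot absorb this.

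The same issue reappears in the $\beta$-comparison. Moving the mass of $S$ near $y$ to the atom at $y$ naively costs an error of order one at all scales $s\sim r_y$.

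What makes such a reduction work is the following:
\begin{enumerate}
\item The hypothesis \eqref{eq: multiscale appro-2} passes to measurable subsets $S'\subset S$, because this unnormalized $\beta_2$ is monotone in the measure. So, after reducing to finite measure, you may restrict to the set where the density theorem gives $\mathcal H^k(S\cap B_t(x))\le C(m)t^k$ for all $t\le r_0$, a universal constant.
\item The atoms should have radius $r_y=t_y/\tau$. Here $t_y\le\tau r_0/5$ is a radius with $\mathcal H^k(S'\cap B_{t_y}(y))\ge c(m)t_y^k$, which exists because the upper density is a.e.\ bounded below by a dimensional constant. This makes the transfer error an $O(\tau^2r^k)$ contribution. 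The main term only loses a factor $\tau^{-2k}$, absorbed by taking $\de_7$ small depending on $\tau$.
\item The induction on scales should be run on the discrete sums $\sum_{y\in B_\rho(z),\,r_y\le\rho}r_y^k$, with the universal constant $C_R(m)$ of Theorem~\ref{thm: discrete-Reifenberg}. That constant then enters only the $O(\tau^2)$ error.
\end{enumerate}
With this, $\mathcal H^k\lfloor S'$ is only ever estimated at scales $\le r_0$, and $\mathcal H^k(S'\cap B_1)\le 5^kC(m)\sum r_y^k$ is uniform in $r_0$.

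\textbf{Step 2.} Your claim that points of zero lower density form an $\mathcal H^k$-null set ``by a standard density argument'' is false. The density theorems only give two-sided a.e.\ bounds on the upper density $\Theta^{*k}$. There are Cantor-type sets with $0<\mathcal H^k<\infty$ whose lower $k$-density vanishes at every point.

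Under the hypotheses of the theorem the lower density is indeed positive a.e., but only a posteriori, because $S$ turns out to be rectifiable. Using it as an input to the Reifenberg parametrization is therefore circular. The parametrization has to be built from positive upper density alone, organized around scales where $\mathcal H^k(S\cap B_s(y))\gtrsim s^k$. Only at such scales does smallness of the $L^2$ numbers $\beta_2$ actually control where $S$ lies.
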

	
	The key problem in application is how to establish an estimate for Jones' number $\beta_2$.  Similar to 
    { that} of harmonic maps in \cite{Naber-V-2017}, we need an $L^2$ subspace approximation theorem for stationary $J$-holomorphic maps.
	
	For $x\in B_1(p)$ and $r>0,$ we denote for brevity
	$$
	W_r(x):= W_{r,10r}(x)=\int_{B_{10r}(x)\ba B_r(x)}\frac{|(y-x)\cdot Du(y)|^2}{|y-x|^m}dy\geq0.
	$$
	for a stationary $J$-holomorphic map $u\in H^{1}(M,N)$. 
	
	The following result can be proved by the similar argument of Naber-Vatorta \cite{Naber-V-2017}. 
	\begin{theorem}\label{thm: L2 subspace approximation}
		Fix $\Lambda, \ep>0,0<r\leq1$ and $x\in B_1(p).$ There exist $C>0$ and $\delta_8>0$ depending on $m,\Lambda,\ep,M,N,g,h,j,J$ 
         such that, if $u\in H^{1}_\La(M,N)$ is a stationary $J$-holomorphic map, 
        { which} is $(0,\de_8)$-symmetric on $B_{10r}(x)$ but not $(k+1,\ep)$-symmetric, then for any nonnegative finite measure $\mu$ on $B_r(0)=\exp_p^{-1}(B_r(p))$ we have
		\begin{equation}\label{eq: L2 subspace approximation}
			\beta^k_{2,\mu}(x,r)^2\leq Cr^{-k}\int_{B_r(x)}W_r(y)d\mu(y).
		\end{equation}
	\end{theorem}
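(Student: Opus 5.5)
We follow the Naber--Valtorta argument and split it into a linear-algebra step and an analytic step. After rescaling, take $x=0$ and $r=1$, so all statements are about $B_1$ and $B_{10}$; the metric is uniformly close to Euclidean there, and its error terms only affect constants. We may also normalize $\mu(B_1)>0$. Let $\bar x=\mu(B_1)^{-1}\int y\,d\mu(y)$ be the center of mass, and let
\[
Q(v,w)=\mu(B_1)^{-1}\int_{B_1}\langle y-\bar x,v\rangle\langle y-\bar x,w\rangle\,d\mu(y)
\]
be the covariance form. Let $\lambda_1\ge\dots\ge\lambda_{2m}$ be its eigenvalues, with orthonormal eigenvectors $v_1,\dots,v_{2m}$. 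The standard computation gives
\[
\beta^k_{2,\mu}(0,1)^2\le \mu(B_1)\,(\lambda_{k+1}+\dots+\lambda_{2m})\le C\,\mu(B_1)\,\lambda_{k+1},
\]
the infimum being realized by $V=\bar x+\Span\{v_1,\dots,v_k\}$. It therefore suffices to show
\[
\lambda_{k+1}\,\mu(B_1)\le C\int_{B_1}W_1(y)\,d\mu(y).
\]

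For the analytic step, the eigenvector identity $\lambda_i v_i=\mu(B_1)^{-1}\int (y-\bar x)\langle y-\bar x,v_i\rangle\,d\mu$ yields, for each $z\in B_{10}$,
\[
\lambda_i\mu(B_1)\,\langle v_i,Du(z)\rangle=\int\langle y-\bar x,v_i\rangle\,\big((z-\bar x)-(z-y)\big)\cdot Du(z)\,d\mu(y).
\]
The term with $(z-\bar x)$ vanishes, because $\int\langle y-\bar x,v_i\rangle d\mu=0$. Applying Cauchy--Schwarz gives
\[
\lambda_i^2\mu(B_1)^2|v_i\cdot Du(z)|^2\le \lambda_i\mu(B_1)^2\cdot\mu(B_1)^{-1}\int|(z-y)\cdot Du(z)|^2\,d\mu(y).
\]
Summing over $i\le k+1$, using $\lambda_i\ge\lambda_{k+1}$, and integrating over the annulus $A=B_4\setminus B_3$ (with $|z-y|\in[2,5]$ for $y\in B_1$, which fits inside the range of $W_1$ after adjusting the annulus constants), we obtain
\[
\lambda_{k+1}\mu(B_1)\int_A|P\cdot Du|^2\le C\int_{B_1}W_1(y)\,d\mu(y),
\]
where $P=\Span\{v_1,\dots,v_{k+1}\}$. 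Fubini converts $\int_A|(z-y)\cdot Du(z)|^2dz$ into a bound by $W_1(y)$, up to the weight $|z-y|^{m}$, which is comparable to a constant here.

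The main obstacle is the reverse bound: a lower bound $\int_{A}|P\cdot Du|^2\ge c(\epsilon,\Lambda,m)>0$ for every $(k+1)$-plane $P$. We prove it by contradiction and compactness. Suppose $u_i$ are $(0,\delta_i)$-symmetric on $B_{10}$ with $\delta_i\to0$, are not $(k+1,\epsilon)$-symmetric on $B_1$, and satisfy $\int_A|P_i\cdot Du_i|^2\to0$. Pass to the limit $u_i\rightharpoonup u$ in $H^1$ and $L^2$, using Theorem~\ref{thm: defect measure} and lower semicontinuity. The limit $u$ is weakly $J$-holomorphic, $0$-homogeneous on $B_{10}$ (from the $L^2$ closeness to homogeneous maps), and satisfies $P\cdot Du=0$ on $A$. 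Homogeneity carries this invariance from $A$ to all of $B_{10}$; Proposition~\ref{prop: unique continuation} can be used if needed. Hence $u$ is $(k+1)$-symmetric, and strong $L^2$ convergence then contradicts the assumption that $u_i$ is not $(k+1,\epsilon)$-symmetric on $B_1$. One technical point needs care: energy loss through the defect measure only goes in the favorable direction, since $\int_A|P\cdot Du|^2\le\liminf_i\int_A|P\cdot Du_i|^2$, so weak convergence suffices. Combining the lower bound with the inequality from the analytic step proves \eqref{eq: L2 subspace approximation}.
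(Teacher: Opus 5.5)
Your proposal is correct and follows essentially the same route as the paper. You reduce $\beta^k_{2,\mu}$ to $\lambda_{k+1}$, bound $\lambda_{k+1}\int_{A_{3,4}}|P\cdot Du|^2$ by $\int W_1\,d\mu$ (reproving through the centered Cauchy--Schwarz identity the Naber--Valtorta estimate that the paper only cites), and get the lower bound on $\int_{A_{3,4}}|P\cdot Du|^2$ from the compactness argument of Lemma~\ref{lemma: non higher order sym}, where spreading the $P$-invariance from the annulus by homogeneity on $B_{10}$ is a slightly cleaner substitute for the paper's appeal to unique continuation.
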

	
	Without loss of generality, we assume that $\mu$ is a probability measure supported on $B_1(0)=\exp_p^{-1}(B_1(p))$. Let $x_{cm}$ be the mass center of $\mu$ in $B_1(0)$, i.e.,  
	$
	x_{cm}\equiv\int xd\mu(x).
	$
	Here,  the integral domain is  $B_1(0)$ and is omitted for brevity. The second moment $Q=Q(\mu)$ of $\mu$ is the symmetric bilinear form defined by
	$$
	Q(v,w):=\int[(x-x_{cm})\cdot v][(x-x_{cm})\cdot w]d\mu(x),\qquad \text{for all }\ v,w\in \R^{2m}.
	$$
	Let $\lambda_1(\mu)\ge \cdots\ge \lam_{2m}(\mu)$ be  eigenvalues of $Q$ and $v_1(\mu),\cdots,v_{2m}(\mu)$ be the  { corresponding} eigenvectors. Then we have
	\begin{align}\label{eq: Q eigenvalue}
		Q(v_k)=\lam_kv_k=\int[(x-x_{cm})\cdot v_k][(x-x_{cm}) ]d\mu(x).
	\end{align}By the variational method, we can describe $\lam_1$ by $$\lam_1=\lam_1(\mu):=\max_{|v|=1}\int|(x-x_{cm})\cdot v|^2d\mu(x),$$
	where $v_1=v_1(\mu)$ is any unit vector achieving such a maximum. By induction we have
	$$
	\lam_{k+1}=\lam_{k+1}(\mu):=\max\left\{\int|(x-x_{cm})\cdot v|^2d\mu(x):{|v|^2=1}, v\perp\mbox{span}\{v_i\} \text{ for all }i\leq k\right\},
	$$
	and $v_{k+1}=v_{k+1}(\mu)$ is any unit vector achieving such a maximum. Note that by definition of $v_k$, we have  $V_k=x_{cm}+\mbox{span}\{v_1,\cdots,v_k\}$ is the $k$-dimensional affine subspace achieving the minimum in the definition of $\beta_2$ (see \cite[Remark 49]{Naber-V-2018}). Moreover,
	$$
	\beta_{2,\mu}^k(0,1)^2=\int d^2(x,V_k)d\mu(x)=\int\sum_{i=k+1}^{2m}((x-x_{cm})\cdot v_i)^2d\mu=\lam_{k+1}(\mu)+\cdots+\lam_{2m}(\mu).
	$$
	To prove Theorem \ref{thm: L2 subspace approximation}, we need the following property  on the relationship of $\lam_k,v_k$ and $W_1$. Here and after, let $A_{r_1,r_2}(a):=B_{r_2}(a)\backslash B_{r_1}(a)$ and $A_{r_1,r_2}:=B_{r_2}(0)\backslash B_{r_1}(0)$.
	
	\begin{proposition}[{\cite[Proposition 6.3]{Naber-V-2018}}]\label{prop: relationship of lam,v,W}
		Let $\mu$ be a probability measure in $B_1(0)$ and a map $u\in H^1(B_{10}(0), N)$ for some compact manifold $N$. Let $\lam_k$, $v_k$ be defined as above. Then there exists $C(m)>0$ such that
		\begin{equation}\label{eq: relationship of lam,v,W}
			\lam_k\int_{A_{3,4}}|v_k\cdot Du(z)|^2dz\leq C(m)\int_{B_1} W_1(x)d\mu(x), \quad \text{for all }k\ge 1,
		\end{equation}
	\end{proposition}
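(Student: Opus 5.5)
The plan is to follow the Naber--Valtorta argument: use the eigenvector identity \eqref{eq: Q eigenvalue} to write $\lambda_k v_k$ as a $\mu$-average, pair it with $Du(z)$, and then compare $(x-x_{cm})\cdot Du(z)$ with the radial quantity $(z-x)\cdot Du(z)$ that $W_1$ controls.

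First, take the inner product of \eqref{eq: Q eigenvalue} with $Du(z)$. This gives, for every $z\in A_{3,4}$,
\[
\lambda_k\, v_k\cdot Du(z)=\int \big[(x-x_{cm})\cdot v_k\big]\,\big[(x-x_{cm})\cdot Du(z)\big]\,d\mu(x).
\]

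Second, use that $x_{cm}$ is the mass center, so $\int [(x-x_{cm})\cdot v_k]\,d\mu(x)=0$. This lets me subtract the $x$-independent term $(z-x_{cm})\cdot Du(z)$ inside the integral for free. Since $(x-x_{cm})-(z-x_{cm})=-(z-x)$, I obtain
\[
\lambda_k\, v_k\cdot Du(z)=-\int \big[(x-x_{cm})\cdot v_k\big]\,\big[(z-x)\cdot Du(z)\big]\,d\mu(x).
\]

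Third, apply Cauchy--Schwarz in $d\mu(x)$. By definition of $\lambda_k$, $\int [(x-x_{cm})\cdot v_k]^2\,d\mu=\lambda_k$, so
\[
\lambda_k^2\,|v_k\cdot Du(z)|^2\le \lambda_k\int |(z-x)\cdot Du(z)|^2\,d\mu(x).
\]
If $\lambda_k=0$ the claimed inequality is trivial. Otherwise divide by $\lambda_k$ and integrate over $z\in A_{3,4}$, using Fubini to swap the order of integration.

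Fourth, compare the resulting integral with $W_1(x)$. For $x\in B_1$ and $z\in A_{3,4}$ we have $2\le |z-x|\le 5$, so $z\in A_{1,10}(x)$. The weight $|z-x|^{-m}$ in the definition of $W_1(x)$ is therefore bounded below by a dimensional constant, and
\[
\int_{A_{3,4}}|(z-x)\cdot Du(z)|^2\,dz\le C(m)\,W_1(x).
\]
Integrating this in $d\mu(x)$ gives \eqref{eq: relationship of lam,v,W}.

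None of these steps is really an obstacle; the argument is linear algebra plus the geometric inclusion $A_{3,4}\subset A_{1,10}(x)$ for $x\in B_1$. The one point needing care is the centering trick in the second step, which relies on $x_{cm}$ being the mass center. No property of $u$ beyond $Du\in L^2$ is used, so no stationarity or $J$-holomorphicity is needed.
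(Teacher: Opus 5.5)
Your proposal is correct and is essentially the Naber--Valtorta argument that the paper invokes by citation without reproducing: the eigenvector identity, the centering step via $\int (x-x_{cm})\,d\mu(x)=0$, Cauchy--Schwarz in $d\mu$, and then Tonelli together with the inclusion $A_{3,4}\subset A_{1,10}(x)$ for $x\in B_1$ (with $2\le|z-x|\le 5$, so the weight in $W_1$ is bounded below). As you observe, only $Du\in L^2$ is used, so neither stationarity nor $J$-holomorphicity plays a role.
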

	
	To further estimate the left hand side of \eqref{eq: relationship of lam,v,W}, we need the following reversed result of Lemma \ref{lemma: almost translation invariant}.
	\begin{lemma}\label{lemma: non higher order sym}
		For any $\Lambda, \ep>0$, there exists  $\de_9=\de_9(m,\La,\ep,M,N,g,h,j,J)>0$ satisfying the following property: For any stationary $J$-holomorphic map $u\in H^{1}_\La(M,N)$, if it is $(0,\de_9)$-symmetric on $B_1(p)$ but not $(k+1,\ep)$-symmetric, then 	
		\begin{equation}\label{eq: non higher order sym}
			\int_{A_{3,4}}|P\cdot D u|^2>\delta_9
		\end{equation}
		for every $(k+1)$-dimensional subspace $P$.
	\end{lemma}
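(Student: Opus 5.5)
We argue by contradiction and compactness, in the same way as Lemma \ref{lemma: almost translation invariant} and Lemma \ref{lemma: quantitative symmetry improvement}. Suppose the statement fails for some $\Lambda,\ep>0$. Then there are stationary $J$-holomorphic maps $u_i\in H^1_\La(M,N)$, balls $B_1(p_i)$ and $(k+1)$-dimensional subspaces $P_i$ with the following properties: $u_i$ is $(0,i^{-1})$-symmetric on $B_1(p_i)$; $u_i$ is not $(k+1,\ep)$-symmetric on $B_1(p_i)$; and
\[
\int_{A_{3,4}}|P_i\cdot Du_i|^2\le i^{-1}.
\]
We pull back by $\exp_{p_i}$ and rotate so that $P_i\to P$. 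Passing to a subsequence, we get $u_i\wto u$ weakly in $H^1(B_{10})$ and strongly in $L^2$, and the metrics $g_i$ converge smoothly, so that $j$ converges to a constant structure in the limit frame. By Theorem \ref{thm: defect measure}, $u$ is weakly $J$-holomorphic and smooth off a closed set of locally finite $\HH^{2m-2}$-measure.

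The first key step is to transfer the two pieces of information to $u$. From $(0,i^{-1})$-symmetry there are $0$-homogeneous maps $K_i$ with $\medint_{B_1}|u_i-K_i|^2\le i^{-1}$. By compactness of homogeneous maps (the same fact used in Lemma \ref{lemma: stratification for SJholo}), $K_i\to K$ with $K$ $0$-homogeneous, and $u=K$ a.e. on $B_1$. By weak lower semicontinuity,
\[
\int_{A_{3,4}}|P\cdot Du|^2\le\liminf_i\int_{A_{3,4}}|P_i\cdot Du_i|^2=0,
\]
so $u$ is translation invariant along $P$ on $A_{3,4}$.

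The second step propagates these properties to all of $B_{10}$. Since $u$ is $0$-homogeneous on $B_1$, its $0$-homogeneous extension $\tilde u$ to $\R^{2m}$ is weakly $J$-holomorphic for the limit structure. Moreover $\tilde u$ is smooth off a set of finite $\HH^{2m-2}$-measure (a cone over the singular set of $u$ on the sphere), and $\tilde u$ agrees with $u$ on the open set $B_1$. Proposition \ref{prop: unique continuation} then gives $u=\tilde u$ on $B_{10}$, so $u$ is $0$-homogeneous there. Fix $v\in P$ and consider $w(x)=u(x+tv)$ for small $|t|$. This $w$ is again weakly $J$-holomorphic for the constant structure and has the same regularity, and it agrees with $u$ on an open subset of $A_{3,4}$. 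Applying unique continuation again shows that $u$ is $P$-invariant. Hence $u$ is $(k+1)$-symmetric. Finally, strong $L^2$ convergence gives $\medint_{B_1}|u_i-u|^2\to0$, so $u_i$ is $(k+1,\ep)$-symmetric on $B_1$ for large $i$, which is a contradiction.

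The main obstacle is the use of unique continuation in the limit. Proposition \ref{prop: unique continuation} needs one map to be stationary, but the weak limit $u$ need not be stationary, since energy may concentrate. I would first try to bypass this step. The pieces of $u$ to be compared are each smooth off an $\HH^{2m-2}$-finite set, and their complement stays connected. So the smooth unique continuation result, Proposition \ref{prop: smooth unique continuation}, applied on the complement of this singular set suffices, exactly as in the general case of the proof of Proposition \ref{prop: unique continuation}.
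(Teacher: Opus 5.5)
Your proposal is correct and follows the paper's route. The paper argues by contradiction and compactness, uses unique continuation to push the $P$-invariance from $A_{3,4}$ to all of $B_4$ (your translate-and-compare argument with $u(\cdot+tv)$ is what its one-line appeal amounts to), and closes with strong $L^2$ convergence; your remark that the limit only needs to be smooth off a closed set of finite $\HH^{2m-2}$-measure, not stationary, is exactly what the paper's proof of Proposition~\ref{prop: unique continuation} uses.

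One point to tighten: constancy of the limiting domain structure $j$ does not follow from smooth convergence of the pulled-back metrics at unit scale. It comes from the standing normalization that the geometry on $B_{10}(p)$ is close to flat, with the closeness sent to zero along the contradiction sequence. The paper also relies on this implicitly, and both your homogeneous extension and your translated comparison map need it.
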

	\begin{proof}
		 Argue by contradiction.
        There is a sequence of stationary $J$-holomorphic maps $u_i\in H^{1}_\La(M,N)$ such that $u_i$ is $(0,i^{-1})$-symmetric on $B_1(p)$ but not $(k+1,\ep_0)$-symmetric for some $\ep_0>0$ fixed. Moreover, after an orthogonal transformation, there is a  $(k+1)$-dimensional subspace $P$ such that
		$$
		\int_{A_{3,4}}|P\cdot D u_i|^2\leq i^{-1},\ \forall i.
		$$
		Up to a subsequence if necessary, we assume $u_i\wto u$ weakly in $H^{1}$ and strongly in $L^2$ for some 0-homogeneous $J$-holomorphic  map $u\in H^1$ and 
		\begin{align*}
			\int_{A_{3,4}}|P\cdot Du|^2=0.
		\end{align*}
		By the unique continuation property we know 
		$	\int_{B_4}|P\cdot Du|^2=0.$ 
		Thus $u$ is $(k+1)$-symmetric on $B_1(0)$. Consequently, $u_i$ is $(k+1,\ep_0)$-symmetric for $i\gg 1$, since $u_i$ converges strongly to $u$ in $L^2$. This is a contradiction.  The proof is complete. 
	\end{proof}
	
	Now we prove Theorem \ref{thm: L2 subspace approximation}.
	\begin{proof}[Proof of Theorem \ref{thm: L2 subspace approximation}]  
		We denote $B_1(0)$ for $\exp_p^{-1}(B_1(p))$. Without loss of generality, we assume $\mu(B_1(0))=1$. Since $\lambda_k$ is nonincreasing,
		\begin{equation}\label{eq: beta<(2m-k)lam-k+1}
			\beta_{2,\mu}^k(0,1)^2=\lam_{k+1}+\cdots+\lam_{2m}\leq(2m-k)\lam_{k+1}.
		\end{equation}
		Thus it suffices to estimate $\la_{k+1}$. 
		Let $V^{k+1}=\mbox{span}\{v_1,\cdots,v_{k+1}\},$ then
		\begin{align*}
			&\lam_{k+1}\int_{A_{3,4}}|V^{k+1}\cdot Du(z)|^2dz=\lam_{k+1}\sum_{i=1}^{k+1}\int_{A_{3,4}}|Du(z)\cdot v_i|^2dz\\
			\le& \sum_{i=1}^{k+1}\lam_i\int_{A_{3,4}}|Du(z)\cdot v_i|^2dz\le C\int W_1d\mu(x),
		\end{align*}
		where the second inequality is due to Proposition \ref{prop: relationship of lam,v,W}.
		Set $\delta_8=\delta_9$, where $\delta_9$ is the number defined in Lemma \ref{lemma: non higher order sym}. By Lemma \ref{lemma: non higher order sym}, there holds 
		\begin{align*}
			\int_{A_{3,4}}|V^{k+1}\cdot Du(z)|^2dz\geq \de_8,
		\end{align*}
		since by assumption $u$ is $(0,\de_8)$-symmetric but not $(k+1,\ep)$-symmetric on $B_{10}(p)$. Then we have 
		\begin{align*}
			\de_8\lam_{k+1}\leq\lam_{k+1}\int_{A_{3,4}}|V^{k+1}\cdot Du(z)|^2dz \leq C\int W_1d\mu(x).
		\end{align*}
		Since $\de_8=\de_9$ depends only on $m,\La,\ep,M,N,g,h,j,J$, by \eqref{eq: beta<(2m-k)lam-k+1}, we conclude 
		\begin{align*}
			\beta_{2,\mu}^k(0,1)^2\leq C\int W_1d\mu(x).
		\end{align*}
		This completes the proof.
	\end{proof}

	\section{Covering lemma}\label{sec:covering lemma}
	In this section, we establish a covering Lemma which aims to give an estimate of the energy at singular points. The original idea is from \cite[Section 6.2]{Naber-V-2018} and here we shall follow the slightly modified presentation in \cite{GJXZ-2024} and \cite{Guo-Liu-Xiang-2026}. 
	
	Our main result of this section is to prove the following covering Lemma. 
	
	\begin{lemma}[Main covering Lemma]\label{lemma: main covering lemma}
		{ For $\Lambda>0$,} let $u\in H^1_\La(M,N)$ be a stationary $J$-holomorphic map and $B_3(p)\subset M$. For any $\ep>0$ and $0<r<R\leq1$, there exist constants $\delta=\de(m,\La,\ep,M,N,g,h,j,J)>0$ and $C=C(m,\La,\ep,M,N,g,h,j,J)$ with the following property:
		For any ${S}\su S^k_{\ep,\de r},$ there exists a finite covering of $S\cap B_R(p)$ such that
		\begin{equation}\label{eq: main cover property}
			S\cap B_R(p)\su \bigcup_{x\in\mathcal{C}}B_{r_x}(x) \quad\text{with}\quad  r_x\geq r \quad\text{and}\quad  \sum_{x\in\mathcal{C}}r_x^k\leq C R^k.
		\end{equation}
		Moreover, the balls in $\{B_{r_x/5}(x)\}_{x\in\cal{C}}$ are pairwise disjoint and $\cal{C}\subset S\cap B_R(p) $. 
	\end{lemma}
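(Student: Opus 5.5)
Following Naber--Valtorta \cite[Section 6.2]{Naber-V-2018} (as presented in \cite{GJXZ-2024,Guo-Liu-Xiang-2026}), the plan is an induction on energy scales combined with the discrete Reifenberg theorem. Set $E=\sup_{x\in B_{2R}(p)}\Phi_u(x,R)\le C\Lambda$. By scaling, $R=1$. Fix $\eta>0$ small (depending on $\delta_5,\delta_8$ and the Reifenberg constant $\delta_6$) and pinching scale $\rho=10^{-10}$. The aim is to build the covering in two stages.

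\textbf{Stage one: a good/bad tree decomposition.} Call a ball $B_s(x)$ \emph{good} if the pinched set $F=\{y\in B_s(x):\Phi_u(y,\rho s)>E-\eta\}$ lies close to some $(k-1)$-plane, i.e. it does not $\rho s$-effectively span a $k$-dimensional affine subspace. Call it \emph{bad} otherwise. On a bad ball, Lemma~\ref{lemma: Phi_u almost const} and Proposition~\ref{prop: stratum property-1} (with Proposition~\ref{prop: quant cone splitting}) show that $S^k_{\ep,\delta r}\cap B_s(x)$ lies in a $2\rho s$-neighborhood of a $k$-plane. On a good ball, points off $B_{\rho s}$ of the $(k-1)$-plane have energy dropped by $\eta$ at scale $\rho s$. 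Recursively subdivide with $\rho$-scale balls. Bad balls are recovered by $\le C(m)\rho^{-k}$ balls of radius $\rho s$ near a $k$-plane, while good-ball points either drop energy or lie in a $(k-1)$-dimensional tube, whose cover has total $\sum r^k$ at most $C\rho^{1-k}\cdot\rho^k s^k$, which is small. Stop when the radius reaches $r$ or the energy has dropped by $\eta$. Since $E\le C\Lambda$, energy can drop at most $C\Lambda/\eta$ times, so iterating the stage-one covering gives the claimed bound provided each stage satisfies $\sum r_x^k\le C(m)$ with a constant independent of the number of scales.

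\textbf{Stage two: the uniform packing estimate.} This is the main obstacle. The naive count multiplies $C(m)\rho^{-k}$ over all scales, which is useless, so the bound must come from Theorem~\ref{thm: discrete-Reifenberg}. Take the final disjoint family $\{B_{r_x/5}(x)\}$ (obtained by Vitali), with $\mu=\sum\omega_k r_x^k\delta_x$. Prove the packing bound inductively from small scales up, assuming $\mu(B_s(y))\le C_R s^k$ below the current scale. At centers $x$, all points are pinched: $\Phi_u(x,r_x)>E-\eta$ while $\Phi_u(x,R)\le E$. Hence $u$ is $(0,\delta_8)$-symmetric on the relevant balls by Proposition~\ref{prop: quantitative symmetry}. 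Since $x\in S^k_{\ep,\delta r}$, $u$ is not $(k+1,\ep)$-symmetric; to pass this property to nearby centers use Lemma~\ref{lemma: pinching almost symmetry} / Remark~\ref{rmk:on Lemma 313}, at the cost of replacing $\ep$ by $\ep/2$. Theorem~\ref{thm: L2 subspace approximation} then gives
\[
\beta^k_{2,\mu}(y,s)^2\le Cs^{-k}\int_{B_s(y)}W_s(z)\,d\mu(z).
\]
Integrate in $s$ and in $\mu$, use Fubini, and use the inductive packing bound. The sum over dyadic $s$ of $W_s(z)$ telescopes by \eqref{eq: monotonic} to $C(\Phi_u(z,10s')-\Phi_u(z,r_z))\le C\eta$. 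This gives
\[
\int_{B_s(y)}\int_0^s\beta^2\,\frac{dt}{t}\,d\mu\le C\eta\, s^k<\delta_6^2 s^k
\]
for $\eta$ small. Theorem~\ref{thm: discrete-Reifenberg} then yields $\mu(B_s(y))\le C_R s^k$ with a constant $C_R(m)$ that does not grow, which closes the induction.

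\textbf{Final bookkeeping.} Finally, ensure $r_x\ge r$ by stopping at scale $r$; the choice $\delta r$ in $S^k_{\ep,\delta r}$ gives room for the pinching scales. Ensure $\mathcal C\subset S\cap B_R(p)$ by recentering each ball at a point of $S$ and doubling its radius. Enforce disjointness of the $r_x/5$ balls by a Vitali selection. The delicate points are the consistency of the constants: $\eta$ is chosen before $\delta$ and $\rho$ is fixed, so that $\delta=\min(\delta_2,\delta_5,\delta_8,\dots)$ depends only on the data.
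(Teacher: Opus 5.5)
Your outer layer agrees with the paper. Its proof of this lemma is only the following: iterate Covering Lemma~\ref{lemma: covering lemma II} at most $[\delta^{-1}E]+1$ times, then recenter at points of $S$ and apply Vitali. Each non-terminal ball there has the uniform drop $\sup_{y\in B_{2r_x}(x)\cap S}\Phi_u(y,r_x)\le E-\delta$, so the next round starts at level $E-\delta$. The tree arguments behind Covering Lemmas~\ref{lemma: covering lemma I} and~\ref{lemma: covering lemma II} are cited from Naber--Valtorta, not proved.

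Your reconstruction of those internals breaks at stage two. You run a single merged tree and apply Theorem~\ref{thm: discrete-Reifenberg} to its whole final family, asserting that every center satisfies $\Phi_u(x,r_x)>E-\eta$. By your own stopping rule this is false for the leaves created at points off the $(k-1)$-tube of a non-spanning ball. Those are exactly the points with $\Phi_u(x,\rho s)\le E-\eta$, and nothing bounds their drop $\Phi_u(x,R)-\Phi_u(x,r_x)$ by a small quantity. For such points you have neither the $(0,\delta_8)$-symmetry required in Theorem~\ref{thm: L2 subspace approximation} nor the telescoping bound $\sum_s W_s(x)\lesssim\eta$. The same can happen for balls descending from the tube of a non-spanning ball, whose centers need not be pinched.

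The missing idea is the two-tree decomposition that the paper encodes by splitting into Covering Lemmas~I and~II.

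First, run the Reifenberg packing only on a maximal subtree of balls whose high-energy set $\rho$-effectively spans a $k$-plane. There Proposition~\ref{prop: stratum property-1} confines $S$ to a $2\rho s$-tube of the plane, and Lemma~\ref{lemma: Phi_u almost const} makes every point of $S$ in the ball pinched. Hence all leaves of this subtree (radius $r$ or non-spanning) are pinched, and Theorem~\ref{thm: discrete-Reifenberg} gives $\sum r_x^k\le C_R(m)s_0^k$, where $s_0$ is the root radius.

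Second, treat non-spanning balls separately:
\begin{itemize}
\item off-tube points go into energy-drop balls of radius $\rho s/10$, at total cost $C(m)\rho^{k-2m}s^k$, with the drop holding on the whole doubled ball and not just at the center;
\item the tube costs $C(m)\rho\,s^k$ and restarts spanning subtrees.
\end{itemize}
The alternation is summed by the geometric series $\sum_j\bigl(C(m)C_R(m)\rho\bigr)^j$. So $\rho=\rho(m)$ must be chosen small relative to $C_R(m)$, not fixed at $10^{-10}$.

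This scheme also needs two thresholds. First fix the pinching level $\eta$ demanded by the Reifenberg smallness. Then define $F$ with $\delta\le\delta_4(\eta)$ from Lemma~\ref{lemma: Phi_u almost const}, not with $\eta$ itself; this $\delta$ also sets the per-round energy drop and the number $\sim E/\delta$ of rounds.
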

	
	\subsection{Proof of Lemma \ref{lemma: main covering lemma}}
	
	To prove Lemma \ref{lemma: main covering lemma}, we need two  Lemmas due to Naber-Valtorta \cite{Naber-V-2018}; see also \cite{Guo-Liu-Xiang-2026} for detailed proofs. 
	
	\begin{lemma}[Covering Lemma I]\label{lemma: covering lemma I}
		{ For $\Lambda>0$,} let $u\in H^1_\La(M,N)$ be a stationary $J$-holomorphic map and $B_3(p)\subset M$. For any $\ep>0$, $0<\rho=\rho(m)\leq\frac{1}{100}$ and $0<r<R\leq1$, there exist constants $\de=\de(m,\La,\rho,\ep,M,N,g,h,j,J)>0$ and $C_1(m)$ with the following property: 
		For any ${S}\su S^k_{\ep,\de r}$ with
		$$E:=\sup\limits_{x\in B_{2R}(p)\cap S}\Phi_u(x,R)\leq\Lambda, $$ 
		there exists a finite covering of $S\cap B_R(p)$ such that
		\begin{equation}\label{eq: covering lemma I}
			S\cap B_R(p)\su \bigcup_{x\in\mathcal{C}}B_{r_x}(x), \quad\text{with}\quad  r_x\geq r \quad\text{and}\quad  \sum_{x\in\mathcal{C}}r_x^k\leq C_1(m)R^k.
		\end{equation}
		Moreover, for each $x\in\mathcal{C}$, one of the following conditions is satisfied:
		\begin{itemize}
			\item[(i)] $r_x=r$;
			\item[(ii)] the set of points $F_x:=\big\{y\in S\cap B_{2r_x}(x):\Phi_u(y,\rho r_x/10)>E-\de\big\}$ is contained in $B_{\rho r_x/5}(L_x)\cap B_{2r_x}(x),$ where $L_x$ is  a  $(k-1)$-dimensional affine subspace.
		\end{itemize}
	\end{lemma}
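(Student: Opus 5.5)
This is Naber--Valtorta's first covering lemma, and I would build the covering by a finite, scale-by-scale induction, using only the pinching dichotomy. Fix $\rho=\rho(m)\le 1/100$ and rescale so that $R=1$. Choose $\delta$ small compared with the constants of Lemma \ref{lemma: Phi_u almost const} (with $\eta$ small relative to the $\delta_5$ of Lemma \ref{lemma: pinching almost symmetry}) and of Proposition \ref{prop: stratum property-1}.

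\emph{Inductive step.} Start from the single ball $B_1(p)$ and process balls at scales $\rho^j$. Take a ball $B_s(x)$ in the current family, with $s=\rho^j>r$, and look at the set
\[
F_x=\{y\in S\cap B_{2s}(x):\Phi_u(y,\rho s/10)>E-\delta\}.
\]
\begin{itemize}
\item If $F_x\subset B_{\rho s/5}(L)$ for some $(k-1)$-dimensional affine $L$, stop: $B_s(x)$ is a final ball of type (ii).
\item Otherwise $F_x$ $\rho s/10$-effectively spans a $k$-dimensional affine subspace $V$. By Lemma \ref{lemma: Phi_u almost const}, every point of $V\cap B_{2s}(x)$ has energy at least $E-\eta$ at scale $\rho s/10$. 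Since $\Phi_u(\cdot,s)\le E$ (after a harmless shrinking of scales), these points are pinched, and Proposition \ref{prop: stratum property-1} gives $S\cap B_s(x)\subset B_{\rho s/5}(V)$. The hypothesis $S\subset S^k_{\ep,\delta r}$ with $s\ge r$ supplies the required non-symmetry at all scales $\ge\delta s$.
\end{itemize}
In the second case, cover $S\cap B_s(x)\cap B_{\rho s/5}(V)$ by balls of radius $\rho s$ centered in $S$ and with $5^{-1}$-disjointness. There are at most $C(m)\rho^{-k}$ of them, so their $k$-content is at most $C(m)s^k$. Continue until the radius reaches $r$, where the balls become type (i).

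\emph{Main obstacle.} The naive count multiplies by $C(m)$ at every generation and does not close, so the control of $\sum r_x^k$ is the real difficulty. To get a uniform bound, I would not sum generation by generation. Instead I would apply the discrete Reifenberg Theorem \ref{thm: discrete-Reifenberg} to the measure $\mu=\sum_{x\in\mathcal C}\omega_k r_x^k\delta_x$ of final balls, after first disjointifying to the balls $B_{r_x/5}(x)$. That theorem needs the $\beta$-number smallness \eqref{eq: multiscale appro-1}. For this I would use Theorem \ref{thm: L2 subspace approximation}, whose hypotheses (small $0$-symmetry, failure of $(k+1,\ep)$-symmetry) hold at the pinched points. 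It bounds $\beta_{2,\mu}^k(y,s)^2\le Cs^{-k}\int_{B_s(y)}W_s\,d\mu$. Summing over dyadic $s$, the $W_s$ telescope through the monotonicity formula \eqref{eq: monotonic} into $\Phi_u(z,\cdot)$-drops. Because centers of non-terminal balls have energy at least $E-\eta$ and at most $E$, these drops total at most $C\eta$, which is at most $\delta_6^2$ once $\eta$ is small.

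\emph{Closing the bound.} I would make this rigorous by an inner induction on scales: assume $\mu(B_t(y))\le C_R t^k$ for $t\le\rho^{j}$, verify \eqref{eq: multiscale appro-1} on balls of radius $\rho^{j-1}$, and apply the discrete Reifenberg theorem to upgrade, again getting the constant $C_R(m)$ with no growth. The resulting bound is $\sum r_x^k\le C_1(m)$, which is \eqref{eq: covering lemma I} after undoing the rescaling $R=1$. The main care needed is that the energy pinching is used only at centers $y$ satisfying $\Phi_u(y,\rho r_y/10)>E-\delta$. At the other centers one has stopped, and they enter only through the final measure.
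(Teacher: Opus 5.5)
There is a genuine gap. Your architecture is the Naber--Valtorta scheme the paper imports without proof: good/bad balls, the discrete Reifenberg theorem applied to the final measure, $\beta$-control via Theorem~\ref{thm: L2 subspace approximation}, and an inner induction on scales that keeps the constant $C_R(m)$ from growing. The step that makes it work, however, is not established in your version.

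\textbf{Where pinching is needed.} The smallness in \eqref{eq: multiscale appro-1} requires energy pinching at \emph{every} point of $\supp\mu$, that is, at the terminal centers, for two reasons.
\begin{itemize}
\item Theorem~\ref{thm: L2 subspace approximation} is applied at $(y,s)$ with $y\in\supp\mu$, and it needs $B_{10s}(y)$ to be $(0,\delta_8)$-symmetric.
\item It bounds $\beta^k_{2,\mu}(y,s)^2$ by $W_s(z)$ at points $z\in\supp\mu$. The telescoping $\sum_s W_s(z)\lesssim\Phi_u(z,10t)-\Phi_u(z,c\,r_z)$ is $O(\eta)$ only if $z$ is pinched down to a scale comparable to $r_z$.
\end{itemize}
So your closing remark, that terminal centers ``enter only through the final measure'' and need no pinching, has it backwards.

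\textbf{Why your construction does not give it.} You place children at arbitrary points $y\in S$ within $\rho s/5=r_y/5$ of $V$. Lemma~\ref{lemma: Phi_u almost const} gives $\Phi_u\ge E-\eta$ only on $V$ itself. A displacement of $r_y/5$ costs a fixed factor, $\Phi_u(y,r_y)\gtrsim(4/5)^{2m-2}(E-\eta)$, rather than an $\eta$-small loss. Being the center of a good (non-terminal) ball says nothing about the center itself lying in $F$. Without pinching at the centers, scales $s\sim r_z$ contribute $O(\Lambda)\,\mu(B_{2t}(x))$ to the left side of \eqref{eq: multiscale appro-1}, and the Reifenberg hypothesis fails.

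\textbf{The missing idea: choose the centers at pinched points.}
\begin{itemize}
\item Cover $S\cap B_s(x)\subset B_{\rho s/5}(V)$ by balls $B_{\rho s}(z)$ with $z\in V\cap B_{2s}(x)$. Then $\Phi_u(z,\rho s/10)\ge E-\eta$ by Lemma~\ref{lemma: Phi_u almost const}.
\item The upper bound $\Phi_u(z,t)\le E+\eta$ at larger scales must come by compactness from the nearby pinched points of $F_x$. This is not automatic, since $E$ is a supremum over $S$ only.
\item Such $z$ need not lie in $S$. So the failure of $(k+1,\ep)$-symmetry required by Theorem~\ref{thm: L2 subspace approximation} must be transferred from a nearby pinched point of $F_x\subset S$ via Lemma~\ref{lemma: pinching almost symmetry}, at the cost of replacing $\ep$ by $\ep/2$. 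This is exactly the use announced in Remark~\ref{rmk:on Lemma 313}.
\item Covering Lemma I does not ask for centers in $S$. That property is restored only in Lemma~\ref{lemma: main covering lemma}, by the final Vitali re-centering.
\end{itemize}

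\textbf{An alternative fix.} You could instead sharpen Proposition~\ref{prop: stratum property-1} to $S\cap B_s(x)\subset B_{\tau\rho s}(V)$ with $\tau\ll1$; its proof allows this at the price of a smaller $\delta$. Centers taken in $S$ then inherit pinching at scale $\approx r_y/10$ up to an error $C(m)\tau$, and their non-symmetry is automatic.
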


	\begin{lemma}[Covering Lemma II]\label{lemma: covering lemma II}
		{ For $\Lambda>0$}, let $u\in H^1_\La(M,N)$ be a stationary $J$-holomorphic map and $B_3(p)\subset M$. For any $\ep>0$ and $0<r<R\leq1$, there exist $\de=\de(m,\La,\ep,M,N,g,h,j,J)>0$ and $C_2(m)$ such that for any subset ${S}\su S^k_{\ep,\de r}$, there exists a finite covering of $S\cap B_R(p)$ by
		\begin{equation}\label{eq: covering lemma II-1}
			S\cap B_R(p)\su \bigcup_{x\in\mathcal{C}}B_{r_x}(x), \quad\text{with}\quad \ r_x\geq r\quad\text{and}\quad\sum_{x\in\mathcal{C}}r_x^k\leq C_2(m)R^k.
		\end{equation}
		Moreover, for each $x\in\mathcal{C}$, one of the following conditions is satisfied:
		\begin{itemize}
			\item[i)] $r_x=r;$
			\item[ii)] we have the following uniform energy drop property:
			\begin{equation}\label{eq: covering lemma II-2}
				\sup_{y\in B_{2r_x}(x)\cap S}\Phi_u(y,r_x)\leq E-\de,
			\end{equation} 
			where $$E:=\sup_{x\in B_{2R}(p)\cap{S}}\Phi_u(x,R).$$
		\end{itemize}
	\end{lemma}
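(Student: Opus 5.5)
The plan is to obtain Covering Lemma II from Covering Lemma I by an iteration in which each round trades the energy-pinched part of a ball for a geometrically smaller $k$-content. Fix $\rho=\rho(m)\le\frac1{100}$, to be chosen at the end so that $C(m)\rho\le\frac12$ for a dimensional constant $C(m)$ described below. Let $\delta$ be the constant of Lemma \ref{lemma: covering lemma I} for this $\rho$, and set $E=\sup_{x\in B_{2R}(p)\cap S}\Phi_u(x,R)\le C\Lambda$. Since $\Phi_u(y,\cdot)$ is monotone by \eqref{eq: monotonic}, every sub-ball $B_s(y)$ with $y\in S\cap B_{2R}(p)$ and $s\le R$ satisfies $\sup_{B_{2s}(y)\cap S}\Phi_u(\cdot,s)\le E$, at least after a harmless enlargement of the initial radius. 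Hence Lemma \ref{lemma: covering lemma I} can be reapplied on sub-balls with the \emph{same} $E$, and its conclusion (ii) is always measured against this fixed $E$.

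\textbf{Step 1 (one round).} Apply Lemma \ref{lemma: covering lemma I} on $B_R(p)$. This gives balls $B_{r_x}(x)$ with $\sum r_x^k\le C_1(m)R^k$. Balls with $r_x=r$ are final. For a ball of type (ii), split $S\cap B_{r_x}(x)$ into two parts: the points within distance $2\rho r_x/5$ of $L_x$, and the rest.

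For the far part, cover by a Vitali family of balls $B_{\rho r_x/20}(y)$ with centers $y$ in that set. Each doubled ball $B_{\rho r_x/10}(y)$ lies outside $B_{\rho r_x/5}(L_x)$. Every point $y'\in S$ there is therefore not in $F_x$, so $\Phi_u(y',\rho r_x/10)\le E-\delta$, and by monotonicity $\Phi_u(y',\rho r_x/20)\le E-\delta$. Thus these balls satisfy the energy drop (ii) of the present lemma and are final. If $\rho r_x/20<r$, use radius $r$ instead; those balls are final of type i). Their number is at most $C(m)\rho^{-2m}$, so their content is at most $C(m)\rho^{k-2m}r_x^k$.

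For the near part, cover $B_{2\rho r_x/5}(L_x)\cap B_{r_x}(x)$ by at most $C(m)\rho^{1-k}$ balls of radius $\rho r_x$ centered in $S$. This uses that $L_x$ is only $(k-1)$-dimensional. Their total content is at most $C(m)\rho\, r_x^k\le\frac12 r_x^k$ once $\rho$ is chosen. These balls are \emph{not} final.

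\textbf{Step 2 (iteration and summation).} Reapply Step 1 to every non-final ball $B_{\rho r_x}(y)$, again with $S\cap B_{\rho r_x}(y)\subset S^k_{\varepsilon,\delta r}$ and the same $E$. Continue until all radii have reached $r$, which happens after finitely many rounds because the radii shrink by the factor $\rho$ each time. Let $A_j$ denote the total $k$-content of the non-final balls after $j$ rounds. Each round produces non-final content at most $\frac12 C_1(m)$ times the previous one, so after replacing $\rho$ by a smaller dimensional constant we get $A_j\le 2^{-j}C_1(m)R^k$. The final balls produced at round $j$ carry content at most $C_1(m)(1+C(m)\rho^{k-2m})A_{j-1}$. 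Summing the geometric series gives $\sum r_x^k\le C_2(m)R^k$, where $C_2(m)$ depends only on $m$ because $\rho=\rho(m)$. Finally, pass to a Vitali subfamily of the final balls, possibly enlarging radii by a fixed factor, so that the $B_{r_x/5}(x)$ are disjoint. This changes $C_2$ only by a dimensional factor and preserves properties i) and ii), since the energy drop at radius $r_x$ survives shrinking the radius by monotonicity.

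\textbf{Main obstacle.} The delicate point is the bookkeeping in Step 1. The drop condition (ii) must hold for \emph{all} $y\in B_{2r_y}(y)\cap S$, not merely at the center, and the radii chosen for the far balls must be compatible with the $\rho r_x/5$-neighborhood given by Lemma \ref{lemma: covering lemma I}. The second concern is to make sure that the multiplicative loss of Lemma \ref{lemma: covering lemma I} in each round is absorbed by the factor $\rho$ coming from the $(k-1)$-dimensionality of $L_x$. If the simple absorption fails, I would instead follow the organization of \cite[Section 6.2]{Naber-V-2018}: apply Lemma \ref{lemma: covering lemma I} directly to each bad ball and use its content bound only once per scale.
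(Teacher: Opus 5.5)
Your proposal is correct and is essentially the Naber--Valtorta iteration; the paper does not write this proof out but cites it. The iteration splits each type (ii) ball of Lemma~\ref{lemma: covering lemma I} into the $2\rho r_x/5$-neighborhood of the $(k-1)$-plane $L_x$, re-covered at scale $\rho r_x$ with $k$-content $C(m)\rho\, r_x^k$, and a far part where the energy drop holds, then contracts via $C(m)C_1(m)\rho\le\frac12$, which is legitimate precisely because $C_1$ does not depend on $\rho$.

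Two small repairs are needed. First, justify the ``same $E$'' by keeping all centers in $S\cap B_R(p)$ with radii $\le\rho R$ after the first round: then $B_{2s}(y)\subset B_{2R}(p)$ gives a sub-ball energy $E'\le E$, and $\{\Phi_u>E-\delta\}\subset\{\Phi_u>E'-\delta\}$ transfers the drop to $E$. Second, delete your final Vitali step. Lemma II does not require it, and enlarging the radii would destroy both i) and ii); that step belongs to the proof of Lemma~\ref{lemma: main covering lemma}.
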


	Based on the above two Lemmas, we can complete the proof of Lemma \ref{lemma: main covering lemma}.
	\begin{proof}[Proof of Lemma \ref{lemma: main covering lemma}]
		Note that the energy $E$ defined as in Lemma \ref{lemma: covering lemma II} satisfies $E\leq C\Lambda$. So iterating Lemma \ref{lemma: covering lemma II} by at most $i=[\de^{-1}E]+1$ times, where $i$ depends on $m,\Lambda,\e,M,N,g,h,j,J$ and $E$, we could obtain a covering $\{B_{r_x}(x)\}_{x\in \mathcal{C}^i}$ of $S\cap B_R(p)$ such that $r_x\ge r$ and 
		$$
		\sum_{x\in \cal{C}^i}r_x^k\leq C_2(m)^iR^k.
		$$ 
		Then for $x\in S\cap B_R(p)$, we have the larger covering $\{B_{2r_x}(x)\}_{x\in \widetilde{\cal C}^i}$ such that
		$$
		S\cap B_R(p) \subset \bigcup_{x\in \widetilde{\cal C}^i} B_{2 r_x}(x)\quad \text{ and } \quad  \sum_{x\in \cal{C}}(2 r_x)^k\leq 2^kC_2(m)^iR^k.
		$$
		Finally, by Vitali's covering Lemma, we can  select a family of disjoint balls $\{B_{2r_x}(x)\}_{x\in \cal{C}}$ from  $\widetilde{\cal C}^i$ such that
		$$ S\cap B_R(p) \subset \bigcup_{x\in \cal{C}} B_{10r_x}(x)\quad \text{ and } \quad  \sum_{x\in \cal{C}}(10r_x)^k\leq 10^kC_2(m)^iR^k.$$
		The proof is complete by taking $C(m,\Lambda,\e,M,N,g,h,j,J)=10^kC_2(m)^i$. 
	\end{proof}

	\section{Proofs of main theorems}\label{sec:proofs}
	
	\subsection{Proof of Theorem \ref{thm: stratification of SJholo}}
	Theorem \ref{thm: stratification of SJholo} follows from the main covering Lemma, Lemma \ref{lemma: main covering lemma}, and the rectifiable Reifenberg theorems in Section \ref{sec:reifenberg}.
	\begin{proof}[Proof of Theorem \ref{thm: stratification of SJholo}]
		
		Let $\de=\de(m,\Lambda,\ep,M,N,g,h,j,J)>0$ be fixed. By Lemma \ref{lemma: main covering lemma}, for all $0<r<\delta$,
		\begin{equation}\label{eq: main thm proof-1}
			\mbox{Vol}(T_r(S^k_{\ep,r}(u))\cap B_1(p))\le \mbox{Vol}\Big(T_r\Big(S^k_{\ep,\de }(u)\cap B_1(p)\Big)\Big) \leq C'_\ep r^{2m-k}
		\end{equation}
		with $C'_{\ep}=C'_\ep(m,\Lambda,\ep,M,N,g,h,j,J)$. For $r\in[\de,1)$, we have 
		\begin{equation}\label{eq: main thm proof-2}
			\mbox{Vol}\Big(T_r\Big(S^k_{\ep,r}(u)\cap B_1(p)\Big)\Big) \le \mbox{Vol}(T_1(B_1(p))) \leq C_m \le C_m \left(\frac{r}{\de} \right)^{2m-k}.
		\end{equation}
		\eqref{eq: main thm proof-1} and \eqref{eq: main thm proof-2} complete the proof of  \eqref{eq: Vol estomate-1} for all $0<r<1$.  The volume estimate \eqref{eq: Vol estomate-2} follows from \eqref{eq: Vol estomate-1} by noting that $S^k_{\ep}(u)\subset S^k_{\ep, r}(u)$ for any $r>0$.

		In order to prove the rectifiability of $S^k(u)$, it is sufficient to prove the rectifiability of $S^k_\ep(u)$ for each $\ep>0$, since $S^k(u)=\bigcup\limits_{i\ge 1}S^k_{1/i}(u)$.
		
		By the volume estimate \eqref{eq: Vol estomate-2}, we have $\HH^k(S^k_\ep(u)\cap B_1(p))\leq C_\ep.$ Similarly, for $B_r(x)$ with $x\in B_1(p)$ and $r\leq 1$, we have
		\begin{equation}\label{eq: main thm Alhfors}
			\HH^k(S^k_\ep(u)\cap B_r(x))\leq C_\ep r^k.
		\end{equation}
		
		Let $S\su S_\ep^k(u)\cap B_1(p)$ be an arbitrary measurable subset with $\HH^k(S)>0$. Set 
		$$g(x,r)=\Phi_u(x,r)-\Phi_u(x,0),\quad x\in B_1(p),\ r\leq 1.$$
		This function is nondecreasing in $r$ and uniformly bounded for all $x\in B_1(p)$ and $r\in[0,1]$, and pointwise converging to $0$ as $r\to0$. Then the dominated convergence theorem implies that for all $\de>0$, there exists $\overline{r}>0$ such that
		\begin{equation}\label{eq: select E}
			\medint_S g(x,10\overline{r})d\HH^k(x)\leq\de^2.
		\end{equation}
		So we can find a measurable subset $E\su S$ with $\HH^k(E)\leq\de\HH^k(S)$ and $g(x,10\overline{r})\leq\de$ for all $x\in F:=S\ba E$. Now cover $F$ by a finite number of balls $B_{\overline{r}}(x_i)$ centered on $F$. Rescaling if necessary, we may assume that $B_{\overline{r}}(x_i)=B_1(0)$. Then $g(x,10)\leq\de$ for $x\in F$. Similar to \eqref{eq: select E}, we can choose $\de$ sufficiently small such that $u$ is $(0,\de_9)$ symmetric in $B_{10}$. Theorem \ref{thm: L2 subspace approximation} implies 
		\begin{equation*}
			\beta_{2,\HH^k\lfloor F}(z,s)^2\leq C_1s^{-k}\int_{B_s(z)}W_s(y)d\HH^k\lfloor F(y) \quad \text{for all }z\in F,\ s\leq1.
		\end{equation*}
		Integrating the previous estimate with respect to $z$ and using \eqref{eq: main thm Alhfors} yield that for all  $x\in B_1(p)$ and $s\leq r\leq1,$ there holds
		\begin{align*}
			\int_{B_r(x)}\beta_{2,\HH^k\lfloor F}(z,s)^2d\HH^k\lfloor F(z)&\leq C_1s^{-k}\int_{B_r(x)}\int_{B_s(z)}W_s(y)
            d\HH^k\lfloor F(y)d\HH^k\lfloor F(z)\\
			&\leq C_1C_\ep\int_{B_{r+s}(x)}W_s(z)d\HH^k\lfloor F(z).
		\end{align*}
		Integrating again with respect to $s$, we obtain that for all $x\in B_1(p)$ and $r\leq1,$
		\begin{align*}
		\int_{B_r(x)}\int_0^r\beta_{2,\HH^k\lfloor F}(z,s)^2\frac{ds}{s}d\HH^k\lfloor F(z)&\leq C_1C_\ep\int_{B_{2r}(x)}[\Phi_u(z,10r)-\Phi_u(z,0)]d\HH^k\lfloor F(z)\non\\
			&\leq c(m)C_1C_\ep^2\de r^k.    
		\end{align*}
		By choosing $\de\le\frac{\de_7^2}{c(m)C_1C_\ep^2}$,
		we deduce from Theorem \ref{thm: rectifiable-Reifenberg} that $F\cap B_1(p)$ is $k$-rectifiable.
		
		Repeating the above argument with $E$ in place of $F$, we could find another measurable set $E_1\subset E$ with $\HH^k(E_1)\leq \delta \HH^k(E)$, and that $F_1:=E\ba E_1$ is $k$-rectifiable. Continuing this process, we eventually conclude that $S$ is $k$-rectifiable. 
		
		The proofs of final assertions are similar to that of \cite{Naber-V-2017} and are thus omitted. Combined with quantitative symmetry improvement lemma \ref{lemma: quantitative symmetry improvement stratification}, the proof is complete.
	\end{proof}
	
	\subsection{Proof of Theorem \ref{thm: N no holo S^2 implies u smooth}}
	
	\begin{proof}[Proof of Theorem \ref{thm: N no holo S^2 implies u smooth}]
		 We use an induction  { on $m$}.  { For $m=2$, suppose $\sing(u)\not=\emptyset$. Let $x_0\in\sing(u)$, since there is no non-constant $J$-holomorphic maps from $\mathbb{S}^2$ to $N$, there exists
        a nontrivial stationary tangent map $\omega(z)=\omega(\frac{z}{|z|}):\mathbb{C}^2\to N$ of $u$ at $x_0$
        such that $z_0\in\sing(\omega)$ for some
        $z_0\in \mathbb{S}^3\subset \mathbb{C}^2$.
        From the equation of $J$-holomorphic maps,
        $w_0=j_0(z_0)\in\sing(\omega)$. Thus
        $\sing(\omega)\supset {\rm{span}}\{z_0, w_0\}$
        and $\mathcal{H}^2(\sing(\omega))>0$. This is impossible. Thus the conclusion holds for $m=2$}.
        Assume that for all $2\le m\le \ell-1$, the conclusion holds. Now we prove the case when $m=\ell$. 
		Suppose  $\sing(u)\not=\emptyset$, then by Proposition \ref{prop: tangent map},  for $x\in\sing(u)$, there exists a non-constant tangent $J$-holomorphic map $T:(\R^{2\ell},j_0)\to N$ of $u$ at $x$, where $j_0$ is the standard complex structure on $\R^{2\ell}$. By Corollary \ref{coro: compactness of SJholo}, $T$ is a stationary $J$-holomorphic map. Since $T$ is $0$-homogeneous by Proposition \ref{prop: tangent map}, we have $$
		\frac{\partial T}{\partial r}=0\quad \text{for a.e. }x\in \R^{2\ell}\backslash\{0\}.
		$$
		By the definition of $J$-holomorphic map, we also have$$
		dT(j_0\frac{\partial}{\partial r})=J(T)\circ dT(\frac{\partial}{\partial r})=0\quad \text{for a.e. }x\in \R^{2\ell}\backslash\{0\}.
		$$
		Therefore, there exists a non-constant stationary $J$-holomorphic map $\phi:S^{2\ell-1}/S^1\cong\CP^{\ell-1}\to N$. Since $\CP^{\ell-1}$ has complex dimension $\ell-1$, the induction hypothesis implies that $\phi$ is smooth. Thus, we can choose $a,b\in\mathbb{CP}^{\ell-1}$ such that $\phi(a)\ne\phi(b)$, then we let $L\cong\CP^1$ be the projective line through $a$ and $b$. Since $L$ is a complex submanifold of $\mathbb{CP}^{\ell-1}$, we claim that $\phi|_L:L\cong\CP^1\longrightarrow N$ is a non-constant smooth $J$-holomorphic map, which is a contradiction. The proof is complete.			
	\end{proof}

	
	\bigskip 
	\noindent\textbf{Acknowledgements.} Guo and Liu are supported by the Young Scientist Program of the Ministry of Science and Technology of China (No.~2021YFA1002200), the National Science Foundation of China (No.~12671095), the Taishan Scholar Project and the Jiangsu Provincial Scientific Research Center of Applied Mathematics under Grant No. 7707014040A. Wang is partially supported by NSF DMS 2453789 and Simons Travel Grant TSM-00007723. 
	
	\medskip
	
	\textbf{Data Availability Statement}. Data sharing is not applicable to this article as no
	datasets were generated or analysed during the current study.
	\medskip
	
	\textbf{Conflict-of-Interest Statement.}	The authors declare no conflict of interest.

\end{document}